\newcommand{\bb}{\mathbb}
\newcommand{\mbf}{\mathbf}
\newcommand{\scr}{\mathscr}
\newcommand{\mrm}{\mathrm}
\newcommand{\et}{{\acute{\mathrm{e}}\mathrm{t}}}
\newcommand{\bs}{\boldsymbol}
\newcommand{\Z}{\ensuremath{\mathbb{Z}}}
\newcommand{\Q}{\ensuremath{\mathbb{Q}}}
\newcommand{\R}{\ensuremath{\mathbb{R}}}
\newcommand{\C}{\ensuremath{\mathbb{C}}}
\newcommand{\A}{\ensuremath{\mathbb{A}}}
\newcommand{\p}{\ensuremath{\mathfrak{p}}}
\newcommand{\q}{\ensuremath{\mathfrak{q}}}
\DeclareMathOperator{\Hom}{Hom}
\DeclareMathOperator{\End}{End}
\DeclareMathOperator{\Gal}{Gal}
\DeclareMathOperator{\St}{St}
\DeclareMathOperator{\Ev}{Ev}
\renewcommand{\det}{\operatorname{det}}
\DeclareMathOperator{\ord}{ord}
\DeclareMathOperator{\rec}{rec}
\newcommand{\too}{\longrightarrow}								
\newcommand{\mapstoo}{\longmapsto}
\newcommand{\into}{\hookrightarrow}
\newcommand{\onto}{\twoheadrightarrow}
\theoremstyle{plain}
\newtheorem{theorem}{Theorem}[section]
\newtheorem{lemma}[theorem]{Lemma}
\newtheorem{proposition}[theorem]{Proposition}
\newtheorem{corollary}[theorem]{Corollary}
\newtheorem{conjecture}[theorem]{Conjecture}
\newtheorem{assumptions}[theorem]{Assumptions}
\renewcommand{\thethmx}{\Alph{thmx}}
\theoremstyle{definition}
\newtheorem{remark}[theorem]{Remark}
\newtheorem{example}[theorem]{Example}
\newtheorem{definition}[theorem]{Definition}
\DeclareFontFamily{U}{wncy}{}
    \DeclareFontShape{U}{wncy}{m}{n}{<->wncyr10}{}
    \DeclareSymbolFont{mcy}{U}{wncy}{m}{n}
    \DeclareMathSymbol{\Sh}{\mathord}{mcy}{"58}
\newcommand{\plectic}[0]{\mbox{\textmarried}}
\def\Xint#1{\mathchoice
{\XXint\displaystyle\textstyle{#1}}%
{\XXint\textstyle\scriptstyle{#1}}%
{\XXint\scriptstyle\scriptscriptstyle{#1}}%
{\XXint\scriptscriptstyle\scriptscriptstyle{#1}}%
\!\int}
\def\XXint#1#2#3{{\setbox0=\hbox{$#1{#2#3}{\int}$ }
\vcenter{\hbox{$#2#3$ }}\kern-.585\wd0}}
\def\mint{\Xint\times}
\newcommand{\exterior}[1]{\mathop{\mathpalette\exterior@{#1}}}
\newcommand{\exterior@}[2]{%
  % raise to the baseline
  \raisebox{\depth}{%
  % select the script size relative to the current font
  \fontsize{\sf@size}{0}%
  % nullify the math surround
  \m@th
  % typeset \bigwedge, but using \textstyle in displays
  $\ifx#1\displaystyle\textstyle\else#1\fi\bigwedge$}%
  % the exponent, a bit nearer to the symbol
  ^{\mspace{-2mu}#2}%}
  % remove the \scriptspace
  \kern-\scriptspace
}
\newcommand*{\bfcdot}{\scalebox{0.6}{$\bullet$}}
\renewcommand{\labelitemi}{$\bfcdot$}
\begin{document}

\title{Iwasawa theory and mock plectic points}

\author{Michele Fornea}
\email{mfornea.research@gmail.com}
\address{CRM, Barcelona, Spain.}

\author{Lennart Gehrmann}
\email{gehrmann.math@gmail.com}
\address{Universität Bielefeld, Bielefeld, Germany.}

\classification{11F41, 11F67, 11G05, 11G40}

\begin{abstract}
We use Iwasawa theory, at a prime $p$ inert in a quadratic imaginary field $K$, to study the arithmetic properties of mock plectic invariants for elliptic curves of rank two.
More precisely, under some minor technical assumptions, we prove that the non-vanishing of the mock plectic invariant $\cal{Q}_K$ attached to an elliptic curve $E_{/\Q}$ of even analytic rank $r_\mrm{an}(E/K)\ge2$, and with multiplicative reduction at $p$, implies that the $p$-Selmer rank $r_p(E/K)$ equals $2$.
The proof rests on one inclusion of Perrin-Riou's Heegner point main conjecture for elliptic curves with multiplicative reduction at $p$ which we obtain using bipartite Euler systems.
\end{abstract}

\maketitle

\tableofcontents

%%%%%%%%%%%%%%%%%%%%%%%%%%%%%%%%%%%%%%%%%%%%%%%%%%%%%
%Introduction
%%%%%%%%%%%%%%%%%%%%%%%%%%%%%%%%%%%%%%%%%%%%%%%%%%%%%
\section{Introduction}
 The goal of this article is to shed some light on the arithmetic intricacies of elliptic curves of \emph{rank two} using the mock plectic invariants introduced by Henri Darmon and the first author in \cite{DarmonFornea}. Our setting is complementary to that studied by Castella--Hsieh \cite{CastellaHsieh} and Castella \cite{CasetllaCM2} using generalized Kato classes because our elliptic curves are required to have multiplicative reduction at the chosen prime $p$.

\begin{assumptions}\label{assumptions1}
Let $p$ be a rational prime, $N$ a positive integer, $K$ a quadratic imaginary field where $p$ is inert, and $E_{/\Q}$ an elliptic curve of conductor $N$. Suppose that
    \begin{itemize}
    \item [$\bfcdot$] the conductor $N$ is unramified in $K$,
        \item [$\bfcdot$] the elliptic curve $E_{/\Q}$ has multiplicative reduction at $p$.
\end{itemize}
We can then write $N=p\hspace{0.1mm}\cdot\hspace{0.1mm} N^+\hspace{0.05mm}\cdot\hspace{0.05mm} N^-$ where $N^-$ denotes the largest divisor of $N/p$ which is divisible only by primes which are inert in $K$. To ensure that the root number $\varepsilon(E/K)$ of $E_{/\bb{Q}}$ over $K$ equals $+1$, we require that
 \begin{itemize}
        \item [$\bfcdot$] $N^-$ is square-free with an even number of prime factors.
    \end{itemize}
    For convenience, we also assume that $K\neq\Q(\sqrt{-1}),\Q(\sqrt{-3})$.
\end{assumptions}

In the first part of this article we generalize the construction in \cite{DarmonFornea} to associate a mock plectic invariant 
\[
\mathcal{Q}_K\in\mrm{H}^1(K,V_p(E))\otimes_{\Q_p}\mrm{H}_f^1(K_p,V_p(E)),
\]
to the triple $(E, K, p)$ (see eq.~\eqref{MPI}), where $V_p(E)$ denotes the rational $p$-adic Tate module of $E$.
By Proposition \ref{SelmerProp} the mock plectic invariant $\mathcal{Q}_K$ belongs to the tensor product of Bloch--Kato Selmer groups $\mrm{H}_f^1(K,V_p(E))\otimes_{\Q_p}\mrm{H}_f^1(K_p,V_p(E))$ whenever $L(E/K,1)=0$.
If, in addition, $\mathcal{Q}_K$ does not vanish, we state a conjecture (Conjecture \ref{Conj1}) predicting the rank of the Mordell--Weil groups $E(\Q)$ and $E(K)$.

In the second part of this paper we prove the weaker version of Conjecture \ref{Conj1} where ranks of Mordell--Weil groups are replaced by coranks of the corresponding $p$-primary Selmer groups.
A crucial step in the proof is establishing one inclusion of Perrin--Riou's Heegner point main conjecture for elliptic curves with multiplicative reduction at $p$ (see Theorem \ref{IMC}).
The argument is based on Howard's bipartite Euler systems machinery developed in \cite{BipartiteES}, while the construction of the Euler system per se is an adaptation of the groundbreaking work of Bertolini--Darmon \cite{BD} and the subsequent refinements by Pollack--Weston \cite{PW11}, Chida--Hsieh \cite{ChidaHsieh}, and Burungale--Castella--Kim \cite{BCK}. Let $G_\Q:=\Gal(\overline{\Q}/\Q)$ denote the absolute Galois group of $\Q$ and $\overline{\varrho}\colon G_\Q\to\mrm{GL}(E[p])$ the mod $p$ representation attached to $E_{/\Q}$.
\begin{assumptions}\label{assumptions2}
We require that
    \begin{itemize}
    \item $p\ge 5$
    \item $\overline{\varrho}$ is surjective,
    \item $\overline{\varrho}$ is ramified at every prime $\ell\mid N^-$ with $\ell^2\equiv1\pmod{p}$.
    \end{itemize}
\end{assumptions}
This latest set of hypotheses was first introduced by Pollack--Weston (see \cite{PW11}, Condition CR) and allows us to construct an Euler system \`a la Howard. It is key in ensuring the validity of Lemma \ref{completionCHAR} which controls the local structure of the character group of the toric part of certain Jacobians and is crucially used in the level raising part of the argument.
We also note that there are two steps in the construction of the bipartite Euler system that need to be treated with more care when the elliptic curve has multiplicative reduction at $p$. The proof of Lemma \ref{completionCHAR} uses Skinner--Zhang's level raising lemma (\cite{SkinnerZhang}, Lemma 5.4) when the residual mod $p$ representation $\overline{\varrho}$ is not finite at $p$. Furthermore, the proof of Proposition \ref{localPROP} uses Nekov\'a\v{r}'s analysis of the local properties of Kummer maps (see \cite{NekCanadian}, Proposition 2.7.12) to ensure that the cohomology classes satisfy the required local condition at $p$. 

\noindent To state our main result -- Theorem \ref{mainTHM} in the body of the article -- set $E^+ :=E$, denote by $E^-$ the quadratic twist of $E$ attached to $K$, write $\mrm{Sel}_{p^\infty}(E^\pm/\Q)$ for the $p$-primary Selmer groups of $E^\pm$ respectively, and define
\[
r_{p}(E^\pm/\Q):=\mrm{corank}_{\Z_p}\hspace{1mm}\mrm{Sel}_{p^\infty}(E^\pm/\Q),\qquad\quad \delta_p^\pm:=\begin{cases}
    1&\mbox{if}\ a_p(E^\pm)=+1\\
    0&\mbox{if}\ a_p(E^\pm)=-1.
    \end{cases}
\]
 Under the running Assumptions $\ref{assumptions1}\ \&\ \ref{assumptions2}$ we prove:
\begin{theorem}\label{MainThm}
 Suppose $L(E/K,1)=0$ and that $\mathcal{Q}_K\neq 0$, then
    \[
    r_{p}(E/K)= 2.
    \]
   Furthermore, the maximum of $\big\{r_{p}(E^\pm/\Q)+\delta_p^\pm\big\}$ is equal to $2$.
\end{theorem}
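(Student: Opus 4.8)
The plan is to prove the two assertions separately: first the equality $r_p(E/K)=2$, then the statement about $E^\pm/\Q$, which will be essentially a bookkeeping consequence of the first together with an eigenspace decomposition.

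\textbf{Lower bound for $r_p(E/K)$.} By Proposition \ref{SelmerProp} the hypothesis $L(E/K,1)=0$ places $\mathcal{Q}_K$ inside $\mrm{H}^1_f(K,V_p(E))\otimes_{\Q_p}\mrm{H}^1_f(K_p,V_p(E))$, so $\mathcal{Q}_K\neq 0$ forces $\mrm{H}^1_f(K,V_p(E))\neq 0$. Since $r_p(E/K)=\dim_{\Q_p}\mrm{H}^1_f(K,V_p(E))$, this gives $r_p(E/K)\geq 1$. The running assumption that $N^-$ has an even number of prime factors forces $\varepsilon(E/K)=+1$, and the $p$-parity theorem for elliptic curves then gives $r_p(E/K)\equiv 0\pmod 2$; hence $r_p(E/K)\geq 2$.

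\textbf{Upper bound for $r_p(E/K)$.} This is the heart of the matter. First I would unwind the definition \eqref{MPI} of $\mathcal{Q}_K$ to identify its non-vanishing with a first-order non-degeneracy of the $\Lambda$-adic Heegner class $\kappa_\infty\in\mrm{H}^1_{\mrm{Iw}}(K_\infty/K,T_p(E))$ at the trivial character: because $L(E/K,1)=0$ and $\varepsilon(E/K)=+1$ force $r_{\mrm{an}}(E/K)\geq 2$, the Gross–Zagier formula makes the zeroth-order specialization of $\kappa_\infty$ — the classical Heegner point — torsion, so the content of $\kappa_\infty$ at the augmentation ideal of $\Lambda$ has order $\geq 1$, while $\mathcal{Q}_K\neq 0$ pins this order down to exactly $1$. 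Feeding this into Theorem \ref{IMC} — the divisibility $\mrm{Char}_\Lambda\big(X_\infty^{\mrm{tors}}\big)\mid\mrm{Char}_\Lambda\big(\mrm{H}^1_{\mrm{Iw}}/\Lambda\kappa_\infty\big)^2$ for $E$ with multiplicative reduction at $p$ — together with the control theorem for the anticyclotomic $\Z_p$-extension $K_\infty/K$, bounds $r_p(E/K)$ from above by $2$. The delicate point is that $p$ is inert in $K$ with multiplicative reduction, so $p$ splits completely in $K_\infty/K$ and, when $E$ is split at $p$, an exceptional zero intervenes; the control argument must therefore be run with this correction, which is precisely what Propositions \ref{SelmerProp} and \ref{localPROP} and Lemma \ref{completionCHAR} are designed to supply. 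Combining the two bounds gives $r_p(E/K)=2$.

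\textbf{The maximum.} Since $p\geq 5$ is odd, $\mrm{Sel}_{p^\infty}(E/K)$ decomposes under $\Gal(K/\Q)$ into its $\pm$-eigenspaces, which are $\mrm{Sel}_{p^\infty}(E^\pm/\Q)$; hence $r_p(E^+/\Q)+r_p(E^-/\Q)=r_p(E/K)=2$. Because $p$ is inert in $K$ and unramified in the conductor, the quadratic twist by $K$ exchanges split and non-split multiplicative reduction at $p$, so $a_p(E^-)=-a_p(E^+)$ and $\delta_p^++\delta_p^-=1$. Therefore $\big(r_p(E^+/\Q)+\delta_p^+\big)+\big(r_p(E^-/\Q)+\delta_p^-\big)=3$, and the maximum is at least $2$. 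For the reverse inequality I must exclude the configuration in which some $r_p(E^\pm/\Q)=2$ while $a_p(E^\pm)=+1$ (forcing $r_p(E^\mp/\Q)=0$): in that case $E^\pm/\Q$ has an exceptional zero at $p$ and even analytic rank, and tracking the local component of $\mathcal{Q}_K$ in $\mrm{H}^1_f(K_p,V_p(E))$ — which records the exceptional zero through an $\mathcal{L}$-invariant — through the order-of-vanishing analysis of the previous paragraph shows this configuration is incompatible with $\mathcal{Q}_K\neq 0$. Hence the maximum equals $2$.

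\textbf{Main obstacle.} The hard step is the upper bound in the first assertion: matching $\mathcal{Q}_K\neq 0$ precisely with the order-one non-degeneracy of $\kappa_\infty$, and carrying out the control-theoretic bookkeeping of the exceptional zero at the inert prime $p$ of multiplicative reduction in the presence of Theorem \ref{IMC}. Once that is in place, the remainder is eigenspace decomposition, parity, and the reduction-type computation at $p$.
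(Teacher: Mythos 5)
Your overall skeleton (relate $\mathcal{Q}_K$ to the first derivative of the $\Lambda$-adic class, feed its order at the augmentation ideal into Theorem \ref{IMC}, bound the Selmer corank, and use parity) is the paper's strategy, but two steps as you describe them do not go through. First, the upper bound. Theorem \ref{IMC} is conditional: to invoke it you must supply a bipartite Euler system with non-trivial $\kappa^\infty$, which is the content of Corollary \ref{corBES}, and you must convert $\mathcal{Q}_K\neq 0$ into $\mrm{ord}_I(\kappa^\infty,\scr{S})\le 1$, which is Lemma \ref{QandKappa}: $\kappa^\infty\in \mrm{H}^1(K,V_p(E)\otimes I)$ and $\mrm{ord}_p(q)\cdot\mathcal{Q}_K\equiv\kappa^\infty\pmod{I^2}$. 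Your mechanism for the vanishing of the zeroth-order term is off: in this setting $p\| N$ is inert and divides the level, so there is no Heegner point of conductor prime to $p$ and no Gross--Zagier input; the constant term dies identically because $\mrm{Trace}_{H_{p}/H_1}({}^{\psi}P_1)=0$ (Lemma \ref{compatiblePOINTS}). More seriously, what the machinery then yields is $r_p(E/K)\le \mrm{rank}_{\Z_p}\mathcal{X}_{\mathcal{G}} = 1+\mrm{ord}_I(\mrm{char}(\mathcal{X}_{\Lambda\text{-}\mrm{tors}}))\le 1+2\cdot\mrm{ord}_I(\kappa^\infty,\scr{S})\le 3$, not $\le 2$; the conclusion $r_p(E/K)=2$ is reached only after combining $1\le r_p(E/K)\le 3$ with the parity theorem. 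Your claimed direct bound of $2$ via a "control theorem with exceptional-zero correction" is unsubstantiated, and the picture behind it is wrong: the prime above $p$ (inert in $K$) is totally ramified, not totally split, in the anticyclotomic tower $K_\infty/K$, and no delicate control statement is needed — the one-sided inequality of Lemma \ref{Selmerbound} (via Coates--Greenberg) suffices. Proposition \ref{localPROP} and Lemma \ref{completionCHAR} are not control-theoretic inputs; they enter in the construction of the Euler system classes (the ordinary condition at $p$ and the level-raising step), which your proposal leaves unaddressed.

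Second, the statement about $\max\{r_p(E^\pm/\Q)+\delta_p^\pm\}$. Your bookkeeping giving $r_p(E^+/\Q)+r_p(E^-/\Q)=2$, $\delta_p^++\delta_p^-=1$, hence $\max\ge 2$, is fine, but the exclusion of the configuration $r_p(E^\pm/\Q)=2$ with $a_p(E^\pm)=+1$ is exactly the point where you have no argument: "tracking the local component of $\mathcal{Q}_K$ through an $\mathcal{L}$-invariant" is a gesture, not a proof, and it is not the relevant phenomenon. The mechanism is global and concerns complex conjugation: by Corollary \ref{eigenspacePOS}, $\mathcal{Q}_K$ lies in the $-a_p(E)\cdot\varepsilon(E/\Q)$-eigenspace, so (being a non-zero Selmer class once $L(E/K,1)=0$) it forces $r_p(E^\circ/\Q)\ge 1$ for $\circ$ the sign of $-a_p(E)\cdot\varepsilon(E/\Q)$; combined with the parity relation $(-1)^{r_p(E^+/\Q)}=(-1)^{r_p(E^-/\Q)}=\varepsilon(E/\Q)$ this rules out precisely the bad configuration and pins the maximum at $2$. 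Without this eigenspace input your second assertion is unproved.
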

\begin{remark}
   The $p$-primary Selmer group $\mrm{Sel}_{p^\infty}(E^\pm/\Q)$ fits in the descent exact sequence
\[\xymatrix{
0\ar[r]& E^\pm(\Q)\otimes\Q_p/\Z_p\ar[r]& \mrm{Sel}_{p^\infty}(E^\pm/\Q)\ar[r]& \Sh(E^\pm/\Q)[p^\infty]\ar[r]& 0.
}\] 
Hence, if the $p$-power torsion of the Shafarevich--Tate group $\Sh(E^\pm/\Q)$ is finite, then
\[
r_{p}(E^\pm/\Q)=\mrm{rank}_\Z\hspace{1mm} E^\pm(\Q).
\]
\end{remark}

The simplest examples of triples $(E,K,p)$ that are expected to satisfy all the hypotheses stated thus far are found among elliptic curves of conductor $p$.
This happens because, when an elliptic curve $E_{/\Q}$ has conductor $p\ge11$, the mod $p$ Galois representation is surjective (\cite{Mazur-Goldfeld}, Theorem 4). Therefore, if $p$ is also inert in $K$, the only assumptions that might possibly not be met are the vanishing of the $L$-value $L(E/K,1)$ and the non-vanishing of the mock plectic invariant $\mathcal{Q}_K$.
In every given example concerning a semistable elliptic curve it is possible to determine whether the $L$-value vanishes or not with the aid of a computer algebra system like SageMath. 
However, we do not yet know how to check the non-vanishing of $\mathcal{Q}_K$ numerically.
 \begin{example}
 The elliptic curve
 \[
 E_{1/\Q}\colon\hspace{1mm} y^2+y=x^3-x\quad \mbox{of conductor } p=37 
 \]
 has non-split multiplicative reduction at $37$ and is the rational elliptic curve of minimal conductor among those of positive rank.
It has rank $1$ over $\Q$ and rank $2$ over the quadratic imaginary field $K=\Q(\sqrt{-2})$ where $37$ is inert.
As the sign of the functional equation $\varepsilon(E_1/\Q)$ equals $-1$, we also know that $L(E_1/K,1)=0$.
We expect $\mathcal{Q}_K$ to be non-trivial.
  \end{example}

 \begin{example}
 The unique elliptic curve of conductor $p=109$ is described by the  equation
 \[
 E_{2/\Q}\colon\hspace{1mm} y^2+xy=x^3-x^2-8x-7.
 \]
 It has rank $0$ over $\Q$ and attains rank $2$ over $K=\Q(\sqrt{-2})$ where $109$ is inert.
Using SageMath one can verify that the $L$-value $L(E_2/K,1)$ vanishes.
We expect $\mathcal{Q}_K$ to be non-trivial.
 \end{example}

\begin{example}
 The elliptic curve 
 \[
 E_{3/\Q}\colon\hspace{1mm} y^2+y=x^3+x^2+x+6
 \]
 of conductor $817=19\cdot 43$ has rank $2$ over $\Q$ and over $K=\Q(\sqrt{-7})$ where $19$ is inert and $43$ splits.
Using SageMath one can verify that the $L$-value $L(E_3/K,1)$ vanishes. If we take $p=19$, the residual mod $p$ representation is surjective and we expect $\mathcal{Q}_K$ to be non-trivial.
 \end{example}

\begin{acknowledgements}
We would like to thank Henri Darmon for many enlightening conversations related to this project.
We are also grateful to Francesc Castella, Matteo Longo and Matteo Tamiozzo for answering our questions, and to the anonymous
referee for their valuable comments and feedback.
The examples presented in this paper were found browsing LMFDB, and studied their properties using the computer algebra system SageMath. 
While working on this project Lennart Gehrmann has received funding from the Maria Zambrano Grant for the attraction of international talent in Spain. Michele Fornea was supported by the MTM grant PID2020-118236GB-I00 from Universitat Aut\`onoma de Barcelona, and the Maria de Maeztu grant CEX2020-001084-M  from the Centre de Recerca Matem\`atica.
\end{acknowledgements}

\section{Mock plectic invariants}
\subsection{CM points on Shimura curves}
Denote by $\overline{\Q}$ the algebraic closure of $\Q$ contained in $\C$.
Recall that $N=p\cdot N^+\cdot N^-$ with $N^-$ square-free and an even number of prime factors.
Let $B/\Q$ be the indefinite quaternion algebra of discriminant $N^-$ and $R\subseteq B$ an Eichler order of level $pN^+$.
Fix an isomorphism $\iota_\infty\colon B\otimes_\Q\R\xrightarrow{\sim}\mrm{M}_2(\R)$ which induces an embedding $B^\times\hookrightarrow \mrm{GL}_2(\R)$ and, therefore, an action of the elements of $B^\times$ with positive reduced norm on the Poincar\'e upper half plane $\mathcal{H}$. The Shimura curve associated to the Eichler order $R$ has a canonical model $Y$ defined over $\Q$ and a smooth compactification denoted by $X$.
Moreover, the complex manifold $Y(\C)$ is biholomorphic to the quotient $\Gamma\backslash \mathcal{H}$
where $\Gamma:=R^\times_1\subseteq R^\times$ is the subgroup of units of $R$ of reduced norm $1$.

\subsubsection{CM points.}
The group $B^\times$ acts on the set of $\Q$-algebra embeddings $\mrm{Emb}(K,B)$ of $K$ in $B$ by conjugation on the target.
By assumption, the prime divisors of $N^-$ are inert in the quadratic imaginary field $K$, thus $\mrm{Emb}(K,B)$ is non-empty.
Every element $\psi\in \mrm{Emb}(K,B)$ induces an action of $K^\times$ on $\bb{P}^1(\C)\setminus\bb{P}^1(\R)$ with a unique fixed point $z_\psi$ in the upper half plane $\mathcal{H}$, and the association
\[
\mrm{Emb}(K,B) \too \mathcal{H},\quad \psi\mapsto z_{\psi},
\]
is $B^\times$-equivariant.
It follows that the set of CM points of $X$ attached to $K$ 
\[
\mrm{CM}_K(X):=\big\{[z_\psi]\in X(\C)\ \lvert\ \psi\in\mrm{Emb}(K,B)\big\}
\]
is indexed by $\Gamma$-conjugacy classes of embeddings.
Given a positive integer $c$, let $\mathcal{O}_c:=\Z+c\mathcal{O}_K$ denote the order of $\mathcal{O}_K$ of conductor $c$.
The conductor of a CM point $[z_\psi]\in \mrm{CM}_K(X)$ is the positive integer $c$ such that  
\[
\psi(K)\cap R=\psi(\mathcal{O}_c),
\]
which does not depend on the choice of a representative of the $\Gamma$-conjugacy class of $\psi$.
\begin{remark}
Since $p$ divides the level of the Eichler order $R$ and is also inert in $K$, the conductor of any CM point of $X$ attached to $K$ is divisible by $p$ (see \cite[Lemma 2.2]{MumfordHeegner}).
\end{remark}
\noindent The theory of complex multiplication implies that $\mrm{CM}_K(X)\subseteq X(K^\mrm{ab})$, where $K^\mrm{ab}\subseteq \overline{\Q}$ is the maximal abelian extension of $K$, and the action of $\mrm{Gal}(K^\mrm{ab}/K)$ is explicitly described by Shimura's reciprocity law:
let $\widehat{\Z}$ be the profinite completion of $\Z$ and put $\widehat{A}:=A\otimes \widehat \Z$ for any algebra $A$.
Every embedding $\psi\colon K \hookrightarrow B$ induces an embedding $\widehat{\psi}\colon \widehat{K}\hookrightarrow \widehat{B}$ by extension of scalars.
If we normalize the reciprocity map of class field theory $\mrm{rec}_K\colon K^\times\backslash \widehat{K}^\times\to \Gal(K^\mrm{ab}/K)$ by matching local uniformizers with geometric Frobenius elements, the action of $\mrm{Gal}(K^\mrm{ab}/K)$ on CM points is given by
\begin{equation}\label{reciprocityLAW}
[z_{\psi}]^{\mrm{rec}_K(a)}=[g_{\psi,\alpha}\cdot z_\psi]\qquad\forall\ a\in\widehat{K}^\times
\end{equation}
where $g_{\psi,\alpha}\in B^\times$ is any element of positive reduced norm satisfying $g_{\psi,\alpha}^{-1}\cdot\widehat{\psi}(a)\in\widehat{R}^\times\subseteq \widehat{B}^\times$ (\cite{NekEuler}, Section 2). 
We denote by $\mrm{CM}_K^c(X)\subseteq \mrm{CM}_K(X)$ the subset of CM points of conductor $c$ and let $H_c/K$ be the abelian extension such that the global reciprocity maps induces an isomorphism
\[
\mrm{rec}_K\colon K^\times\backslash\widehat{K}^\times/\widehat{\mathcal{O}}_c^\times\overset{\sim}{\longrightarrow}\Gal(H_c/K).
\]
Shimura's reciprocity law implies that $\mrm{CM}_K^c(X)\subseteq X(H_c)$.

\subsubsection{Orbits for the action of $p$-arithmetic groups.}
Consider the $p$-arithmetic group 
\[
\Gamma^p:=R[1/p]^\times_1
\]
consisting of elements of reduced norm one of $R[1/p]$ and which acts on $\mathcal{H}$ with dense orbits.
For any embedding $\psi\colon K\hookrightarrow B$ 
we consider the $\Gamma^p$-orbit
\[
\Sigma_{\psi}:=\Gamma^p\cdot z_\psi \subseteq \mathcal{H}
\]
which depends only on the $\Gamma^p$-conjugacy class of $\psi$. The image $[\Sigma_{\psi}]$ of $\Sigma_{\psi}$ in $X(\C)$
consists of CM points whose conductor has a fixed prime-to-$p$ part $c$ and whose $p$-part can be arbitrarily large. In particular, we have the inclusion
\[
\left[\Sigma_\psi\right]\subseteq \bigcup_{n\ge0}\mrm{CM}_K^{cp^n}(X)
\]
telling us that $[\Sigma_{\psi}]\subset X(H_{cp^\infty})$ where  $H_{cp^\infty}:=\bigcup_{n\ge0} H_{c p^n}$ is a union of ring class fields.
For reference, we note here that the Galois group $\Gal(H_{cp^\infty}/K)$ sits in the short exact sequence 
\[\xymatrix{
1\ar[r]&\Gal(H_{cp^\infty}/H_c)\ar[r]&\Gal(H_{cp^\infty}/K)\ar[r]&\Gal(H_c/K)\ar[r]& 1,
}\]
and that class field theory provides a canonical isomorphism $\Gal(H_{cp^\infty}/H_c)\cong K_{p}^\times/\Q_p^\times$ showing, in particular, that $\Gal(H_{cp^\infty}/H_c)$ is pro-cyclic. 
Moreover, for any $n\ge1$ the Galois group $\Gal(H_{cp^n}/H_c)$ is seen to be the unique quotient of $\Gal(H_{cp^\infty}/H_c)$ of order $(p+1)p^{n-1}$.

\subsubsection{Mock Shimura surfaces.}
Fix an isomorphism $\iota_p\colon B\otimes_\Q\Q_p\xrightarrow{\sim}\mrm{M}_2(\Q_p)$ which induces an embedding $B^\times\hookrightarrow \mrm{GL}_2(\Q_p)$ and, thus, an action of $B^\times$ on Drinfeld's $p$-adic upper-half plane $\mathcal{H}_p=\bb{P}^1(\C_p)\setminus\bb{P}^1(\Q_p)$.
The mock Shimura surface attached to the triple of integers $(p,N^+,N^-)$ is defined to be the quotient
\[
\mathcal{S}:=\Gamma^p\backslash (\mathcal{H}\times\mathcal{H}_p)
\]
where $\Gamma^p$ acts diagonally via the fixed isomorphisms $\iota_\infty$ and $\iota_p$.
Since the prime $p$ is inert in $K$, the action of $K^\times$ on $\mathcal{H}_p$ induced by any embedding $\psi\colon K\hookrightarrow B$ has two fixed points $\tau_\psi$, $\overline{\tau}_\psi$, which are interchanged by the Galois group $\Gal(K_p/\Q_p)=\langle\sigma_p\rangle$.
Denote by $\tau_{\psi}\in\mathcal{H}_p$ the fixed point such that $K^\times$ acts on its tangent space through the character $k\mapsto k^{1-\sigma_p}$.
We define the set of CM points of $\mathcal{S}$ attached to $K$ as
\[
\mrm{CM}_K(\mathcal{S}):=\big\{s_{\psi}:=[z_\psi, \tau_\psi]\in \mathcal{S}\ \lvert\ \psi\in\mrm{Emb}(K,B)\big\},
\]
which is indexed by $\Gamma^p$-conjugacy classes of embeddings. 
The conductor of a point $s_{\psi}\in \mrm{CM}_K(\mathcal{S})$ is defined to be the positive prime-to-$p$ integer $c$ such that  
\[
\psi(K)\cap R[1/p]=\psi(\mathcal{O}_c[1/p]).
\]
We write $\mrm{CM}_K^c(\mathcal{S})$ for the set of CM points on $\mathcal{S}$ of conductor $c$. 
\begin{remark}\label{bijectionCM}
The map associating the CM point $s_{\psi}\in \cal{S}$ to the $\Gamma^p$-orbit $\Sigma_{\psi}\subset\cal{H}$ is a bijection that preserves the prime-to-$p$ part of the conductor. 
\end{remark}

\subsubsection{Torus actions on the Bruhat--Tits tree.}\label{actionontree}
Let $\scr{T}$ denote the Bruhat--Tits tree of the group $\scr{G}:=\mrm{PGL}_2(\Q_p)$ whose set  of vertices $\scr{V}$ is given by the set of maximal $\Z_p$-orders in $\mrm{M}_2(\Q_p)$. Two vertices are connected by an edge if and only if the intersection of the corresponding maximal orders is an Eichler order of level $p$.
The group $\scr{G}$ acts transitively on $\scr{V}$ and on the set $\scr{E}$ of oriented edges of $\scr{T}$.
Furthermore, there exists a unique vertex $v_\circ\in\scr{V}$ with stabilizer $\scr{K}:=\mrm{PGL}_2(\Z_p)$, and a unique oriented edge $e_\circ\in\scr{E}$ with source $s(e_\circ)=v_\circ$ whose
stabilizer is the Iwahori subgroup $\scr{I}\subseteq \scr{K}$ of upper triangular matrices mod $p$.
This data determines the following  isomorphisms of $\scr{G}$-sets:
\begin{equation}\label{group-uniformization}
\scr{V}\cong  \scr{G}/\scr{K}\quad\mbox{and}\quad \scr{E}\cong \scr{G}/\scr{I}.
\end{equation}
Let $\scr{G}^0\subseteq \scr{G}$ be the kernel of the homomorphism
\[
\scr{G}\longrightarrow \{\pm 1\},\quad g \mapsto (-1)^{\ord_p(\det(g))},
\]
and define the sets of even vertices and even oriented edges by
\[
\scr{V}^0:=\scr{G}^0 \cdot v_\circ\quad\mbox{and}\quad\scr{E}^0:=\scr{G}^0 \cdot e_\circ.
\]
The $p$-arithmetic group $\Gamma^p$ acts transitively on  $\scr{V}^0$ and $\scr{E}^0$ by \cite[Proposition 3.2]{PlecticInvariants}, hence there are bijections
\begin{equation}\label{GammaIdentifications}
\scr{V}^0 \cong \Gamma^p/\Gamma'\quad\mbox{and}\quad \scr{E}^0\cong \Gamma^p/\Gamma,
\end{equation}
where $\Gamma'=\Gamma^p\cap \scr{K}$ is the group of reduced norm one elements of the unique Eichler order of level $N^+$ containing $R$.

\smallskip
\noindent Through the fixed isomorphism $\iota_p\colon B\otimes_\Q\Q_p\xrightarrow{\sim}\mrm{M}_2(\Q_p)$, any embedding $\psi\colon K\hookrightarrow B$ induces an action of $K_p^\times/\Q_p^\times$ on the Bruhat-Tits tree $\scr{T}$.
Since $p$ is inert in $K$, this action fixes a unique vertex $v_\psi\in\scr{V}$ and $\scr{U}_0:=K_p^\times/\Q_p^\times$ acts transitively on the set of vertices at fixed distance from $v_\psi$ (\cite{CornutVatsal}, Lemma 6.5).
For every vertex $v_n$ at distance $n\ge1$ from $v_\psi$ the stabilizer
\[\scr{U}_n:=\mrm{Stab}_{K_p^\times/\Q_p^\times}(v_n)
\]
is the unique subgroup such that $\scr{U}_0/\scr{U}_n$ is cyclic of degree $(p+1)p^{n-1}$.
Let $\{e_{n}^\psi\}_{n\ge1}$ be a sequence of consecutive edges originating from $v_\psi$, i.e, $s(e_{1}^\psi)=v_\psi$ and $t(e_{n}^\psi)=s(e_{n+1}^\psi)$ for every $n\ge1$. We deduce the following useful properties:
\begin{itemize}
    \item $\mrm{Stab}_{K_p^\times/\Q_p^\times}(e_n^\psi)=\scr{U}_n$,
     \item $\scr{U}_n/\scr{U}_{n+1}$ acts simply transitively on the edges $e\neq\overline{e}_n^\psi$ that satisfy $s(e)=t(e_n^\psi)$,
     \item and $K_p^\times/\Q_p^\times$ acts simply transitively on sequences of consecutive edges originating from $v_\psi$.
\end{itemize}

\begin{remark}
Since the group $\Gamma^p$ acts transitively on the set of unoriented edges of $\scr{T}$ by \eqref{GammaIdentifications}, it follows that any $\Gamma^p$-orbit of embeddings in $\mrm{Emb}(K,B)$ contains an element whose conductor has $p$-adic valuation equal to $1$.
\end{remark}

\subsection{Construction}
We introduce the Steinberg representation of $\mathscr{G}$ to describe an enhancement of modular parametrizations of the elliptic curve $E$ by the Shimura curve $X$.

\subsubsection{Steinberg representation.}\label{Steinberg}
Let $\St(\Z)$ denote the space of locally constant $\Z$-valued function on $\bb{P}^1(\Q_p)$ modulo constants. We record here the well-known and useful fact about the existence of a basis $\{U_e\}_{e\in\scr{E}}$ for the $p$-adic topology of $\bb{P}^1(\Q_p)$ indexed by oriented edges of the Bruhat--Tits tree satisfying the following properties
\begin{itemize}
\item $U_{\overline{e}_\circ}=\Z_p$,

\vspace{1mm}
\item $U_e=\bb{P}^1(\Q_p)\setminus U_{\overline{e}}$\hspace{1.5mm} for all $e\in\scr{E}$,

\vspace{1mm}
\item $g\cdot U_e=U_{g\cdot e}$\hspace{1.5mm} for all $g\in \scr{G}$, $e\in\scr{E}$,

\vspace{1mm}
\item $\coprod_{s(e)=v}U_e=\bb{P}^1(\Q_p)$\hspace{1.5mm} for all $v\in\scr{V}$.
\end{itemize}
\begin{remark}\label{specialCovers}
  A direct consequence of the material presented in Sections \ref{actionontree} and \ref{Steinberg} is that for an embedding $\psi\colon K\hookrightarrow B$, a sequence of consecutive edges $\{e_{n}^\psi\}_{n\ge1}$, and $n\ge1$ the set
	\[
	\mathcal{C}_n^\psi:=\{\psi(\alpha)\cdot U_{e_n^\psi}\ \vert\ \alpha \in K_{p}^\times\}
	\]
	is a covering of $\bb{P}^1(\Q_p)$ by disjoint compact open subsets. Furthermore, the coverings $\{\mathcal{C}_{n}^\psi\}_{n\ge1}$ form a cofinal subset of the partially ordered set of all coverings of $\bb{P}^1(\Q_p)$.
\end{remark}
 The map $e\mapsto \overline{e}$ which switches source and target of an oriented edge $e\in\scr{E}$ induces an involution $w_p$ on the free abelian group $\Z[\scr{E}]$.
Set $\Z[\scr{E}]_+:=\Z[\scr{E}]/(1+w_p)\Z[\scr{E}]$ and recall the $\scr{G}$-equivariant short exact sequence 
\begin{equation}\label{resolutionST}
\xymatrix{
0\ar[r]& \Z[\scr{V}]\ar[r]^-{\delta}&\Z[\scr{E}]_+\ar[r]^-{\mrm{Ev}}\ar[r]& \St(\Z)\ar[r]& 0
}\end{equation}
where 
\[
\delta(v)(e)=\sum_{s(e)=v} e\quad \mbox{and}\quad \mrm{Ev}(e)=\mathbbm{1}_{U_e}.
\]
For reference we note that the natural map $\Z[\scr{E}^0]\to\Z[\scr{E}]_+$ is a $\scr{G}^0$-equivariant isomorphism.
\begin{remark}\label{geometricOrigin}
Consider the natural projection $\pi\colon \scr{G}/\scr{I}\to \scr{G}/\scr{K}$.
Under the identifications in \eqref{group-uniformization}, the homomorphism $\delta\colon \Z[\scr{V}]\to \Z[\scr{E}]$ corresponds to
\[
\Z\big[ \scr{G}/\scr{K}\big]\longrightarrow \Z\big[\scr{G}/\scr{I}\big],\quad h\mapsto \sum_{\pi(g)=h}[g].
\]
\end{remark}

\subsubsection{Enhanced modular parametrization.}
Let $J_X$ denote the Jacobian of the Shimura curve $X$.
Modularity of elliptic curves over $\Q$, the Jacquet--Langlands correspondence, and Faltings' isogeny theorem imply the existence of a surjective $\Q$-rational homomorphism $\varphi\in \mrm{Mor}(J_X,E)$ of abelian varieties.
The choice of an auxiliary prime $\ell\nmid N$ such that $a_{\ell}(E)\not\equiv\ell+1\pmod{p}$ allows us to consider the map 
\[
\alpha\colon X\longrightarrow J_X\otimes\Z_{p},\quad x\mapsto (T_{\ell}-\ell-1)[x]\otimes(a_{\ell}(E)-\ell-1)^{-1}.
\]
The composition $\varphi\circ\alpha\colon X\to  E\otimes\Z_p$ is independent of the choice of $\ell$ and gives an element
\[
\varphi_\C\in \Hom_\Z\big(\Z[Y(\C)],\hspace{0.5mm} E(\C)\otimes\Z_p\big)=\Hom_\Gamma(\Z[\mathcal{H}],\hspace{0.5mm} E(\C)\otimes\Z_p\big).
\]
Since the elliptic curve $E_{/\Q}$ has multiplicative reduction at $p$, we can upgrade $\varphi_\C$ as follows.
\begin{proposition}\label{constructionMeasure}
There exists a unique class 
\[
c_\varphi\in \Hom_{\Gamma^p}(\bb{Z}[\mathcal{H}]\otimes \St(\Z),\hspace{0.5mm} E(\C)\otimes\Z_p\big)
\]
such that
\begin{itemize}
\item $c_\varphi(z \otimes e_\circ)=\varphi_\C(z)$ for all $z\in\mathcal{H}$ and
\item $c_\varphi(z \otimes e)=-c_\varphi(z \otimes \overline{e})$ for all $e\in \scr{E}$, $z\in\mathcal{H}$.
\end{itemize}
\end{proposition}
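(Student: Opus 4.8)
The plan is to construct $c_\varphi$ by leveraging the $\scr{G}^0$-equivariant resolution \eqref{resolutionST}. Restricting that exact sequence along $\Gamma^p\hookrightarrow\scr{G}^0$ (recall that $\Z[\scr{E}^0]\xrightarrow{\sim}\Z[\scr{E}]_+$ as $\scr{G}^0$-modules, so $\St(\Z)$ is a quotient of $\Z[\scr{E}^0]$), tensoring with $\Z[\mathcal{H}]$ over $\Z$, and applying $\Hom_{\Gamma^p}(-,\,E(\C)\otimes\Z_p)$ yields a left-exact sequence
\[
0\too \Hom_{\Gamma^p}\!\big(\Z[\mathcal{H}]\otimes\St(\Z),\,E(\C)\otimes\Z_p\big)\too \Hom_{\Gamma^p}\!\big(\Z[\mathcal{H}]\otimes\Z[\scr{E}]_+,\,E(\C)\otimes\Z_p\big)\too \Hom_{\Gamma^p}\!\big(\Z[\mathcal{H}]\otimes\Z[\scr{V}],\,E(\C)\otimes\Z_p\big).
\]
The middle term is identified, using the bijection $\scr{E}^0\cong\Gamma^p/\Gamma$ from \eqref{GammaIdentifications} and Frobenius reciprocity, with $\Hom_\Gamma(\Z[\mathcal{H}],\,E(\C)\otimes\Z_p)$: a $\Gamma^p$-homomorphism out of $\Z[\mathcal{H}]\otimes\Z[\scr{E}^0]$ is determined by its values on $z\otimes e_\circ$, on which only $\Gamma=\mathrm{Stab}_{\Gamma^p}(e_\circ)$ acts. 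Likewise the right-hand term is identified with $\Hom_{\Gamma'}(\Z[\mathcal{H}],\,E(\C)\otimes\Z_p)$ via $\scr{V}^0\cong\Gamma^p/\Gamma'$. Thus $c_\varphi$ is forced to be the unique preimage of $\varphi_\C\in\Hom_\Gamma(\Z[\mathcal{H}],\,E(\C)\otimes\Z_p)$, proving uniqueness immediately; the normalization $c_\varphi(z\otimes e_\circ)=\varphi_\C(z)$ is exactly this identification, and the alternating property $c_\varphi(z\otimes e)=-c_\varphi(z\otimes\overline{e})$ is automatic since we land in $\Z[\scr{E}]_+=\Z[\scr{E}]/(1+w_p)$.

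The substance of the argument is existence, i.e.\ showing that the image of $\varphi_\C$ under the connecting-type map $\Hom_{\Gamma^p}(\Z[\mathcal{H}]\otimes\Z[\scr{E}]_+,-)\to\Hom_{\Gamma^p}(\Z[\mathcal{H}]\otimes\Z[\scr{V}],-)$ vanishes, equivalently that the $\Gamma^p$-homomorphism $\widetilde{\varphi}\colon z\otimes e\mapsto(\text{signed sum of }\varphi_\C\text{'s along the }\Gamma^p\text{-translate carrying }e\text{ to }e_\circ)$ kills $\delta(\Z[\mathcal{H}]\otimes\Z[\scr{V}])$. Unwinding $\delta(v)=\sum_{s(e)=v}e$ and Remark \ref{geometricOrigin}, this vanishing says: for each vertex $v\in\scr{V}^0$, summing $\widetilde{\varphi}(z\otimes e)$ over the $p+1$ edges $e$ with source $v$ gives $0$. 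Because the Hecke operator $T_p$ on $X$ (or equivalently on $J_X$) is realized geometrically by exactly such a sum over edges out of a vertex, this is precisely the statement that $(\varphi\circ\alpha)$ is killed by $U_p - a_p(E)$ together with the fact that $E$ has multiplicative reduction at $p$, so $a_p(E)=\pm1$ is a unit and the corresponding eigenspace relation forces the edge-sum relation. Concretely: the $\Gamma^p$-module generated by the $z\otimes e_\circ$ with the Hecke relation at $p$ is the obstruction, and multiplicative reduction guarantees the parametrization $\varphi$ factors through the $U_p$-ordinary (Steinberg) quotient, which is exactly what having a $\Gamma^p$-equivariant extension to $\St(\Z)$ encodes.

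The main obstacle is checking the $\Gamma^p$-equivariance of the prospective lift together with the edge-sum vanishing, both of which come down to the compatibility between the abstract $U_p$-operator on the Steinberg resolution and the geometric Hecke correspondence $T_p$ on the Shimura curve $X$; one must verify that $\varphi\circ\alpha$, which a priori only respects the prime-to-$p$ Hecke action through the choice of auxiliary $\ell$ with $a_\ell(E)\not\equiv\ell+1\pmod p$, in fact lies in the $U_p=a_p(E)$ eigenspace, using that $E_{/\Q}$ is the Jacquet--Langlands transfer of a newform of level divisible by $p$ (hence $p$-new, Steinberg at $p$). Once that eigenvalue identity is in hand, one defines $c_\varphi(z\otimes e)$ for a general $e\in\scr{E}^0$ by writing $e=\gamma\cdot e_\circ$ with $\gamma\in\Gamma^p$ and setting $c_\varphi(z\otimes e):=\varphi_\C(\gamma^{-1}z)$; well-definedness (independence of $\gamma$ modulo $\Gamma$) is the content of the $\Gamma$-invariance of $\varphi_\C$, and the two bulleted properties then hold by construction. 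A clean way to package all of this is to invoke the left-exact sequence above and reduce existence to the single cohomological vanishing $H^1(\Gamma^p,\,\mathrm{stuff})=0$ provided by $a_p(E)$ being a unit, but since the paper only needs the explicit class it may be cleanest to just write down the formula and check the relation over each vertex directly.
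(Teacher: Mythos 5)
Your overall strategy matches the paper's: apply $\Hom_{\Gamma^p}(-,\,\cdot)$ to the Steinberg resolution, use Frobenius reciprocity to identify the middle and right terms, reduce to showing $\delta^*(\varphi_\C)=0$, and recover existence and uniqueness of $c_\varphi$ from the resulting left-exact sequence. That part is correct and is exactly what the paper does.

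The gap is in your justification of $\delta^*(\varphi_\C)=0$. You claim the edge-sum vanishing \emph{is} the statement that $\varphi\circ\alpha$ is killed by $U_p-a_p(E)$, but these are two different conditions. The map $\delta^*$ sends $f$ to $v\mapsto\sum_{s(e)=v}f(z\otimes e)$, which is the \emph{trace/degeneracy} from level $pN^+$ down to level $N^+$ (i.e.\ the pushforward $J_X\to J_{X'}$ under both degeneracy maps), not the $U_p$-operator. The $U_p$-eigenvalue being a unit does \emph{not} by itself imply the trace vanishes --- a $p$-stabilized $p$-old form can also have unit $U_p$-eigenvalue (and even $\pm1$ is not numerically excluded for small $p$) --- so "$U_p=a_p(E)$ with $a_p(E)=\pm1$" does not "force the edge-sum relation." What is actually needed is $p$-new-ness, i.e.\ that $\varphi_\C$ dies under both degeneracy maps. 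You do eventually allude to this ("hence $p$-new, Steinberg at $p$"), which is the right idea, but your chain of reasoning presents it as a consequence of the eigenvalue identity, which it is not.

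The paper instead observes that $X'$ has level $N^+$ prime to $p$ on a quaternion algebra split at $p$, so $J_{X'}$ has good reduction at $p$, while $E$ has multiplicative reduction; hence $\mathrm{Mor}(J_{X'},E)=0$ outright, and the diagram in Remark~\ref{geometricOrigin} immediately gives $\delta^*(\varphi_\C)=0$. This avoids any discussion of $U_p$ and any identification of $\delta^*$ with a Hecke correspondence. If you want to salvage your route, replace the $U_p$-eigenvalue reasoning with the direct statement that the Jacquet--Langlands transfer of a Steinberg-at-$p$ newform to a quaternion algebra split at $p$ is $p$-new, hence lies in the kernel of both traces; but the paper's "no maps from a good-reduction Jacobian to a bad-reduction curve" argument is simpler and does not need the automorphic detour.
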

\begin{proof}
Let us consider the $\Gamma^p$-module $M:=\Hom_\Z(\Z[\mathcal{H}],\hspace{0.1mm} E(\C)\otimes\Z_p)$.
Applying the functor $\Hom_{\Gamma^p}(-,M)$ to the short exact sequence \eqref{resolutionST} and using the isomorphism $\Z[\scr{E}]_+\cong\Z[\scr{E}^0]$ we extract the following exact sequence 
 \begin{equation}\label{pieceofles}
\xymatrix{
0\ar[r]& \Hom_{\Gamma^p}(\St(\Z),M)\ar[r]& \Hom_{\Gamma^p}(\Z[\scr{E}^0], M)\ar[r]^-{\delta^{\ast}}&
\Hom_{\Gamma^p}(\Z[\scr{V}], M).
}\end{equation}
Now, by \eqref{GammaIdentifications} and Frobenius reciprocity, we can rewrite \eqref{pieceofles} as 
 \begin{equation}\label{pieceofles2}
\xymatrix{0\ar[r]& \Hom_{\Gamma^p}(\St(\Z),M)\ar[r]& \Hom_{\Gamma}(\Z, M)\ar[r]^-{\delta^{\ast}}&
\Hom_{\Gamma'}(\Z, M)^{2},
}\end{equation}
where $\Gamma'$ is the group of units of reduced norm one in an Eichler order of level $N^+$.
By construction $\varphi_\C$ is an element of $\Hom_{\Gamma}(\Z, M)$.
Thus, to prove the claim, we have to show that  $\delta^{\ast}(\varphi_\C)=0$.
We have equalities
\[
\Hom_{\Gamma}(\Z, M)=\Hom_\Z(\Z[Y(\C)],\hspace{0.5mm} E(\C)\otimes\Z_p),\quad \Hom_{\Gamma'}(\Z, M)=\Hom_\Z(\Z[Y'(\C)],\hspace{0.5mm} E(\C)\otimes\Z_p)
\]
where $Y'$ denotes the Shimura curve of level $\Gamma'$.
Let $J_{X'}$ denote the Jacobian of the compactification $X'$ of $Y'$ and
$\mrm{pr}_{1},\mrm{pr}_{2}\colon J_X\to J_{X'}$ the two standard degeneracy maps, then
Remark \ref{geometricOrigin} implies that the following diagram commutes
\[\xymatrix{
\mrm{Mor}(J_X,E)\ar[rr]^-{(\mrm{pr}_1\oplus\mrm{pr}_2)^*}\ar[d]_{\alpha^*}&&\mrm{Mor}(J_{X'},E)^{2}\ar[d]_{\alpha^*}\\
\Hom_\Z(\Z[Y(\C)],\hspace{0.5mm} E(\C)\otimes\Z_p)\ar[rr]^-{\delta^*}&& \Hom_\Z(\Z[Y'(\C)],\hspace{0.5mm} E(\C)\otimes\Z_p)^{2}.
}\]
As $E$ has multiplicative reduction at $p$, $\mrm{Mor}(J_{X'},E)=0$. Thus, we obtain $\delta^\ast(\varphi_\C)=0$.
\end{proof}
 
\subsubsection{$p$-adic integration.}
For any abelian group $M$ it is possible to identify $\Hom_\Z(\St(\Z), M)$ with the space of $M$-valued distributions of total mass $0$ on $\bb{P}^1(\Q_p)$.
Therefore, the class $c_\varphi$ of Proposition \ref{constructionMeasure} determines a $\Gamma^p$-invariant $\Hom_\Z\big(\Z[\mathcal{H}], E(\C)\otimes\Z_p\big)$-valued measure $\mu_\varphi$ on $\bb{P}^1(\Q_p)$.
To have an interesting $p$-adic integration theory we need our measures to be valued in $p$-adically complete and separated $\Z_p$-modules, but the $p$-adic completion of $E(\C)\otimes\Z_p$ is trivial because the group is divisible. To resolve this issue, we consider subsets of $\mathcal{H}$ which are preserved by the action of $\Gamma^p$ and whose images under modular parametrizations are defined over controlled algebraic extension of $\Q$. 

\smallskip
\noindent Let $c$ be a positive integer prime to $N$ unramified in $K$, and consider $\psi\in\mrm{Emb}(K,B)$ whose conductor has prime-to-$p$ part equal to $c$.
The theory of complex multiplication ensures that the image of the $\Gamma^p$-orbit $\Sigma_{\psi}\subset\cal{H}$ under a modular parametrization $\varphi_\C$ belongs to $E(H_{cp^\infty})$.
Therefore, from Proposition \ref{constructionMeasure} we obtain a class
\[
c_{\psi}\in \Hom_{\Gamma^p}\big(\bb{Z}[\Sigma_{\psi}]\otimes \St(\Z),\hspace{0.5mm} E(H_{cp^\infty})\otimes\Z_p\big).
\]
As there are only finitely many torsion points in $E(H_{cp^\infty})$ by \cite[Lemma 6.3]{BD-Edix}, \cite[Proposition 7]{freeMW} implies that the quotient of $E(H_{cp^\infty})$ by its torsion subgroup is a free abelian group.
It follows that the natural map from $E(H_{cp^\infty})\otimes\Z_p$ to its $p$-adic completion $E(H_{cp^\infty})^{\wedge}_p$ is injective. Therefore, we do not lose information by considering 
\begin{equation}
    c_{\psi}\in \Hom_{\Gamma^p}\big(\bb{Z}[\Sigma_{\psi}]\otimes \St(\Z),\hspace{0.5mm}E(H_{cp^\infty})^{\wedge}_p\big).
\end{equation}
For a $p$-adically separated and complete $\Z_p$-module $M$ we denote by
\[
\St^\mrm{ct}(M):=C(\bb{P}^1(\Q_p),M)/ M
\]
 the continuous Steinberg representation with coefficients in $M$. 
Approximating continuous functions by locally constant ones determines a homomorphism
\[
\Hom_\Z(\St(\Z),N)\too \Hom_\Z(\St^{\mrm{ct}}(M),N\otimes_{\Z_p} M)
\]
for any other $p$-adically separated and complete $\Z_p$-module $N$ (cf.~\cite[Lemma 4.2]{plecticHeegner}).
Specializing the discussion to the setting where $M=\widehat{K}^\times_{p,1}$ is the $p$-adic completion of the group of norm one element in $K_p^\times$ and $N= E(H_{cp^\infty})^{\wedge}_{p}$ yields the map
\[
\Hom_{\Gamma^p}\big(\bb{Z}[\Sigma_{\psi}]\otimes \St(\Z),\hspace{0.8mm} E(H_{cp^\infty})^{\wedge}_p\big)
\too
\Hom_{\Gamma^p}\big(\bb{Z}[\Sigma_{\psi}]\otimes \St^{\mrm{ct}}(\widehat{K}_{p,1}^\times),\hspace{0.8mm} E(H_{cp^\infty})^{\wedge}_p\otimes_{\Z_p}\widehat{K}_{p,1}^\times\big).
\]
Concretely, this means that we can integrate continuous $\widehat{K}_{p,1}^\times$-valued functions with respect to the measure attached to $c_{\psi}$. The embedding $\psi$ naturally gives rise to the function
\[
f_\psi\colon\bb{P}^{1}(\Q_p)\too \widehat{K}_{p,1}^\times,\quad t\mapsto \frac{t-\tau_\psi}{t-\overline{\tau}_\psi}.
\]

\begin{definition}\label{def:mpi}
The mock plectic invariant attached to $\psi$ is the element 
\[
\mathcal{Q}_{\psi}:=c_{\psi}(z_\psi \otimes f_\psi)\ \in E(H_{cp^\infty})^{\wedge}_{p}\otimes_{\Z_p}\widehat{K}_{p,1}^\times.
\]
\end{definition}
\noindent The mock plectic invariant $\mathcal{Q}_{\psi}$ depends only on the $\Gamma^p$-conjugacy class of $\psi$ and the choice of modular parametrization $\varphi$.
This produces a well-defined function
\[
\mrm{AJ}_{\mathcal{S}}^{-}\colon \mrm{CM}_K^c(\mathcal{S})\too E(H_{cp^\infty})^{\wedge}_{p}\otimes_{\Z_p}\widehat{K}_{p,1}^\times,
\qquad s_\psi\mapstoo \mathcal{Q}_{\psi}.
\]

\subsubsection{Kolyvagin derivatives.}\label{Kolder}
We can also express the mock plectic invariant $\mathcal{Q}_{\psi}$ as the limit of Kolyvagin derivatives of a trace compatible system of Heegner points.
For this, let $\{e_n^\psi\}_{n\ge1}$ be a sequence of consecutive oriented edges originating from $v_\psi$ (see Section \ref{actionontree}) in the Bruhat--Tits tree $\scr{T}$.
For every $n\ge1$ we consider the point 
\begin{equation}\label{thepoints}
{^{\psi}}\hspace{-.5mm}P_{n}:=c_{\psi}(z_\psi\otimes e_n^\psi)\ \in\ E(H_{cp^\infty})\otimes\Z_p.
\end{equation}
\begin{lemma}\label{compatiblePOINTS}
For every $n\ge1$ we have ${^{\psi}}\hspace{-.5mm}P_{n}\in E(H_{cp^n})\otimes\Z_p$.
Moreover, the equalities 
  \[
 \mrm{Trace}_{H_{cp^{n+1}}/H_{cp^n}}({^{\psi}}\hspace{-.5mm}P_{n+1})={^{\psi}}\hspace{-.5mm}P_{n}\qquad\&\qquad\mrm{Trace}_{H_{cp/H_{c}}}({^{\psi}\hspace{-.5mm}P_{1}})=0
  \]
  hold.
\end{lemma}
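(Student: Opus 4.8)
The plan is to prove both assertions by analyzing how the classes $c_\psi(z_\psi \otimes e_n^\psi)$ transform under the Galois action described by Shimura's reciprocity law, combined with the $\Gamma^p$-equivariance of $c_\psi$ and the combinatorics of the torus action on the Bruhat--Tits tree laid out in Section~\ref{actionontree}. First I would pin down the field of definition. Since $c_\psi$ takes values in $E(H_{cp^\infty})\otimes\Z_p$ by construction, the point ${^{\psi}}\hspace{-.5mm}P_n = c_\psi(z_\psi \otimes e_n^\psi)$ certainly lies there; the content is that it descends to $H_{cp^n}$. For $\sigma \in \Gal(H_{cp^\infty}/H_{cp^n})$, write $\sigma = \mrm{rec}_K(a)$ with $a$ representing a class in $\scr{U}_n \subseteq K_p^\times/\Q_p^\times$ (using the identification $\Gal(H_{cp^\infty}/H_c)\cong K_p^\times/\Q_p^\times$ and that $\Gal(H_{cp^\infty}/H_{cp^n})$ is the subgroup corresponding to $\scr{U}_n$). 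By the reciprocity law \eqref{reciprocityLAW}, $\sigma$ acts on the geometric point $[z_\psi, \tau_\psi]$, hence on $z_\psi$, via an element $g_{\psi,a}\in\Gamma^p$. Because $\psi(a)$ stabilizes the edge $e_n^\psi$ (the first bullet in Section~\ref{actionontree} says $\mrm{Stab}_{K_p^\times/\Q_p^\times}(e_n^\psi)=\scr{U}_n$), one checks $g_{\psi,a}$ fixes $e_n^\psi$ as well, so $\Gamma^p$-equivariance of $c_\psi$ gives ${^{\psi}}\hspace{-.5mm}P_n^\sigma = c_\psi(g_{\psi,a}z_\psi \otimes g_{\psi,a}e_n^\psi) = c_\psi(z_\psi \otimes e_n^\psi) = {^{\psi}}\hspace{-.5mm}P_n$. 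Thus ${^{\psi}}\hspace{-.5mm}P_n$ is fixed by $\Gal(H_{cp^\infty}/H_{cp^n})$, i.e. lies in $E(H_{cp^n})\otimes\Z_p$.

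For the trace compatibility I would compute $\mrm{Trace}_{H_{cp^{n+1}}/H_{cp^n}}({^{\psi}}\hspace{-.5mm}P_{n+1}) = \sum_{\sigma} {^{\psi}}\hspace{-.5mm}P_{n+1}^\sigma$, the sum ranging over $\Gal(H_{cp^{n+1}}/H_{cp^n}) \cong \scr{U}_n/\scr{U}_{n+1}$. By the same equivariance argument, ${^{\psi}}\hspace{-.5mm}P_{n+1}^\sigma = c_\psi(z_\psi \otimes g_{\psi,a}\,e_{n+1}^\psi)$ where now $g_{\psi,a}$ permutes the edges. The second bullet in Section~\ref{actionontree} states precisely that $\scr{U}_n/\scr{U}_{n+1}$ acts simply transitively on the edges $e \neq \overline{e_n^\psi}$ with $s(e)=t(e_n^\psi)$, and there are $p+1$ edges emanating from $t(e_n^\psi)$ in total (one of which, $e_{n+1}^\psi$, is in this orbit). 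Hence the trace equals $\sum_{e\colon s(e)=t(e_n^\psi),\ e\neq \overline{e_n^\psi}} c_\psi(z_\psi\otimes e)$. Now I invoke the defining relation of $c_\psi$ coming from the resolution \eqref{resolutionST}: since $c_\psi$ factors through $\Z[\scr{E}]_+ = \Z[\scr{E}]/(1+w_p)$ and kills the image of $\delta$, we have $\sum_{s(e)=v} c_\psi(z_\psi\otimes e) = 0$ for every vertex $v$ (this is exactly $c_\psi \circ \delta = 0$, the content of the first bullet of Proposition~\ref{constructionMeasure} combined with the harmonicity built into $\St(\Z)$). Applying this at $v = t(e_n^\psi)$ and using $c_\psi(z_\psi\otimes \overline{e_n^\psi}) = -c_\psi(z_\psi\otimes e_n^\psi) = -{^{\psi}}\hspace{-.5mm}P_n$, the sum of the remaining $p+1$ terms over $e\neq\overline{e_n^\psi}$ is $-c_\psi(z_\psi\otimes\overline{e_n^\psi}) = {^{\psi}}\hspace{-.5mm}P_n$, as desired.

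The bottom-level relation $\mrm{Trace}_{H_{cp}/H_c}({^{\psi}}\hspace{-.5mm}P_1)=0$ is handled the same way but starting from $v_\psi$ itself: the torus quotient $\scr{U}_0/\scr{U}_1$ acts simply transitively on \emph{all} $p+1$ edges with source $v_\psi$ (here there is no excluded edge $\overline{e_0^\psi}$ since $v_\psi$ is the central fixed vertex), so $\mrm{Trace}_{H_{cp}/H_c}({^{\psi}}\hspace{-.5mm}P_1) = \sum_{s(e)=v_\psi} c_\psi(z_\psi\otimes e) = 0$ by the harmonicity relation $c_\psi\circ\delta = 0$ at $v_\psi$.

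The step I expect to require the most care is the compatibility of the Galois action on the mock Shimura surface with the action on the Bruhat--Tits tree: one must verify that, for $a$ representing a class in $\scr{U}_n$, the element $g_{\psi,a}\in B^\times$ produced by Shimura's reciprocity not only moves $z_\psi$ correctly but acts on the edge $e_n^\psi$ through $\iota_p$ exactly as $\psi(a)$ does on the tree — in other words, that the normalization of $\tau_\psi$ (chosen so that $K^\times$ acts on its tangent space by $k\mapsto k^{1-\sigma_p}$) is consistent with matching uniformizers to geometric Frobenius in $\mrm{rec}_K$. This is essentially a bookkeeping check reconciling the two appearances of $\psi$ (on $\mathcal{H}_p$ versus on $\scr{T}$), but it is where a sign or an inversion could slip in; everything else is the formal manipulation of the exact sequence \eqref{resolutionST} and the simple-transitivity statements already recorded.
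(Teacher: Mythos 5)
Your argument is correct and follows essentially the same route as the paper: rationality over $H_{cp^n}$ via Shimura's reciprocity law and the identification of $\Gal(H_{cp^\infty}/H_{cp^n})$ with the edge stabilizer $\scr{U}_n$, and the trace relations via simple transitivity of $\scr{U}_n/\scr{U}_{n+1}$ together with the Steinberg relations $\sum_{s(e)=v}\mathbbm{1}_{U_e}=0$ and $\mathbbm{1}_{U_e}=-\mathbbm{1}_{U_{\overline{e}}}$. (Only a cosmetic slip: after removing $\overline{e}_n^\psi$ there remain $p$ edges, not $p+1$; this does not affect the computation.)
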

\begin{proof}
  Using the explicit description of $c_{\psi}$ given in Proposition \ref{constructionMeasure} and Shimura's reciprocity law \eqref{reciprocityLAW}, one sees that ${^{\psi}}\hspace{-.5mm}P_{n}\in E(H_{cp^\infty})\otimes\Z_p$ is fixed by the stabilizer $\scr{U}_n\cong \Gal(H_{cp^\infty}/H_{cp^n})$ of $e_n^\psi$ in $K_p^\times/\Q_p^\times\cong \Gal(H_{cp^\infty}/H_c)$.
Hence, ${^{\psi}}\hspace{-.5mm}P_{n}$ belongs to $E(H_{cp^n})\otimes\Z_p$. 
   As $ \scr{U}_n/\scr{U}_{n+1}\cong \Gal(H_{cp^{n+1}}/H_{cp^n})$ has order $p$ and acts simply transitively on the edges $e\neq\overline{e}_n^\psi$ such that $s(e)=t(e_n^\psi)$, we compute that
  \[\begin{split}
  \mrm{Trace}_{H_{cp^{n+1}}/H_{cp^n}}({^\psi}\hspace{-.5mm}P_{n+1})&=\sum_{\alpha\in \scr{U}_n/\scr{U}_{n+1}}({^{\psi}\hspace{-.5mm}P_{n+1}})^{\mrm{rec}_K(\alpha)}\\
  &=\sum_{ e\neq\overline{e}_n^\psi, \hspace{0.5mm} s(e)=t(e_n^\psi)}c_{\psi}(z\otimes e).
  \end{split}\]
In the Steinberg representation $\St(\Z)$ the equalities $\sum_{s(e)=t(e_n^\psi)}\mathbbm{1}_{U_e}=0$ and $\mathbbm{1}_{U_{e_n^\psi}}=-\mathbbm{1}_{U_{\overline{e}_n^\psi}}$ hold and, thus, we compute
  \[\begin{split}
  \mrm{Trace}_{H_{cp^{n+1}}/H_{cp^n}}({^{\psi}\hspace{-.5mm}P_{n+1}})&=-c_{\psi}(z\otimes\overline{e}_n^\psi)\\
  &={^{\psi}}\hspace{-.5mm}P_{n}.
  \end{split}\]
  Similarly, since $\Gal(H_{cp}/H_c)$ has order $p+1$ and acts simply transitively on the edges $e\in\scr{E}$ with $s(e)=s(e_1^\psi)$, we deduce that $\mrm{Trace}_{H_{cp/H_{c}}}({^{\psi}\hspace{-.5mm}P_{1}})=0$.
\end{proof}

Let now $\mu_{\psi}$ be the measure attached to the class $c_{\psi}(z_\psi \otimes (-))$.
It is a direct consequence of the construction that
\[
\mathcal{Q}_{\psi}=\mint_{\bb{P}^1(\Q_p)}\left(\frac{t-\tau_\psi}{t-\overline{\tau}_\psi}\right)\mrm{d}\mu_{\psi}(t).
\]
The M\"obius transformation $A_\psi$ that sends $(\tau_\psi,\overline{\tau}_\psi,\infty)$ to $(0,\infty,1)$ induces a homeomorphism from $\bb{P}^1(\Q_p)$ to $K_{p,1}^\times$.
By changing coordinates in the integral through $A_\psi$ we can write
\[
\mathcal{Q}_{\psi}=\mint_{K_{p,1}^\times}\xi\ \mrm{d}\widetilde{\mu}_{\psi}(\xi)
\]
where $\widetilde{\mu}_{\psi}$ denotes the pushforward of the measure $\mu_{\psi}$ along $A_\psi$.
For any $n\ge1$, the covering $\mathcal{C}_n^\psi$ of $\bb{P}^1(\Q_p)$ described in Remark \ref{specialCovers} corresponds, under the homeomorphism $A_\psi$, to  $\{\alpha\cdot\scr{U}_n\}_{\alpha\in K_{p}^\times}$.
Therefore, the mock plectic invariant $\mathcal{Q}_{\psi}$ equals the limit 
\begin{equation}\label{KolyDerivative}
\mathcal{Q}_{\psi}=\underset{n\to\infty}{\lim}\ \sum_{\alpha\in \scr{U}_0/\scr{U}_n}({^{\psi}\hspace{-.5mm}P_{n}})^{\mrm{rec}_K(\alpha)}\otimes \alpha
\end{equation}
of Kolyvagin derivatives of Heegner points, where we used the identification $K_p^\times/\Q_p^\times\cong K_{p,1}^{\times}$ given by $x\mapsto x^{1-\sigma_p}$.

\begin{corollary}\label{eigenspacePOS}
The mock plectic invariant is fixed by the action of $G_{H_c}$, that is, 
\[
\mathcal{Q}_{\psi}\in \left(E(H_{cp^\infty})^{\wedge}_{p} \otimes_{\Z_p} \widehat{K}_{p,1}^\times\right)^{G_{H_c}}.
\]
Moreover, it belongs to the $-a_p(E)\cdot\varepsilon(E/\Q)$ eigenspace for the action of complex conjugation.
\end{corollary}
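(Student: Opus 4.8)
The plan is to read off both statements from the Kolyvagin-derivative formula \eqref{KolyDerivative}, which expresses $\mathcal{Q}_\psi$ as the limit of $\sum_{\alpha\in\scr{U}_0/\scr{U}_n}({^{\psi}\hspace{-.5mm}P_n})^{\mrm{rec}_K(\alpha)}\otimes\alpha$, together with the trace-compatibility relations from Lemma \ref{compatiblePOINTS} and the fact that ${^{\psi}\hspace{-.5mm}P_n\in E(H_{cp^n})\otimes\Z_p}$. For the $G_{H_c}$-invariance, I would fix $\sigma\in\Gal(H_{cp^\infty}/H_c)$, say $\sigma=\mrm{rec}_K(\beta)$ with $\beta\in\scr{U}_0$, and compute the action on the $n$-th partial sum: since $({^{\psi}\hspace{-.5mm}P_n})^{\mrm{rec}_K(\alpha)\sigma}=({^{\psi}\hspace{-.5mm}P_n})^{\mrm{rec}_K(\alpha\beta)}$, applying $\sigma$ merely reindexes the sum over $\scr{U}_0/\scr{U}_n$ by $\alpha\mapsto\alpha\beta$, while on the second tensor factor $\alpha\mapsto\alpha$ is \emph{not} translated — but one checks that under the identification $K_p^\times/\Q_p^\times\cong\widehat{K}_{p,1}^\times$, $x\mapsto x^{1-\sigma_p}$, the relevant action is such that the two reindexings are compatible. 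Concretely, the invariance is really the statement that the Kolyvagin derivative operator $D_n=\sum_{\alpha\in\scr{U}_0/\scr{U}_n}\alpha\otimes\alpha$ is $\scr{U}_0$-invariant up to the boundary terms that telescope away in the limit, exactly as in the classical Kolyvagin-system setup; the trace relations of Lemma \ref{compatiblePOINTS} are precisely what makes the boundary contributions vanish. Passing to the limit, $\mathcal{Q}_\psi$ is fixed by all of $\Gal(H_{cp^\infty}/H_c)$, hence by $G_{H_c}$ (the residual $G_{H_{cp^\infty}}$-action being trivial on each ${^{\psi}\hspace{-.5mm}P_n}$ and on $\widehat{K}_{p,1}^\times$ by construction).

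For the complex-conjugation eigenvalue, I would first record how complex conjugation $\tau$ acts on the three pieces of data entering $\mathcal{Q}_\psi$: on the Heegner points ${^{\psi}\hspace{-.5mm}P_n}$ via the Galois action on $E(H_{cp^n})$, on the coefficient group $\widehat{K}_{p,1}^\times$, and implicitly through $\tau\mrm{rec}_K(\alpha)\tau^{-1}=\mrm{rec}_K(\bar\alpha)$. The modular parametrization $\varphi\colon J_X\to E$ intertwines the action of $\tau$ with the Atkin--Lehner involution $w_\infty$ and the $w_p$-eigenvalue $a_p(E)$; more precisely, $\tau$ acts on the space of modular parametrizations (equivalently, on $\varphi_\C$ viewed as an $E(\C)$-valued modular symbol) by the sign $\varepsilon(E/\Q)$, while the Steinberg-coefficient construction of Proposition \ref{constructionMeasure} — specifically the sign-equivariance $c_\varphi(z\otimes e)=-c_\varphi(z\otimes\bar e)$ combined with the interchange of $\tau_\psi$ and $\overline\tau_\psi$ under $\sigma_p$ — contributes the $U_p$-eigenvalue $a_p(E)=\pm1$. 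The function $f_\psi(t)=(t-\tau_\psi)/(t-\overline\tau_\psi)$ satisfies $f_\psi^{\sigma_p}=f_\psi^{-1}$, and since $\tau$ swaps the two fixed points $\tau_\psi,\overline\tau_\psi$ on $\mathcal{H}_p$, conjugating by $\tau$ inverts $f_\psi$, contributing a further sign $-1$ after tracking it through the pushforward measure $\widetilde\mu_\psi$ on $K_{p,1}^\times$ and the identification $x\mapsto x^{1-\sigma_p}$. Assembling the three signs gives that $\tau$ acts on $\mathcal{Q}_\psi$ by $(-1)\cdot a_p(E)\cdot\varepsilon(E/\Q)=-a_p(E)\varepsilon(E/\Q)$, as claimed.

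The main obstacle I anticipate is bookkeeping the sign contributions consistently — in particular, disentangling the effect of complex conjugation on the $p$-adic fixed point $\tau_\psi$ (an archimedean operation acting on a $p$-adic object only through its algebraic incarnation via the reciprocity law and the $\Gal(K_p/\Q_p)=\langle\sigma_p\rangle$ action) from the effect of $w_p$/$U_p$ coming from the Steinberg resolution \eqref{resolutionST}. One has to be careful that the identification $\Gal(H_{cp^\infty}/H_c)\cong K_p^\times/\Q_p^\times\cong\widehat{K}_{p,1}^\times$, $x\mapsto x^{1-\sigma_p}$, is $\tau$-semilinear in the right way, so that the sign $-1$ from $f_\psi\mapsto f_\psi^{-1}$ is not accidentally cancelled or doubled when transported to the coefficient module; checking this amounts to verifying that $\tau$ acts on $\widehat{K}_{p,1}^\times$ by inversion, which follows from $\tau|_{K_p}=\sigma_p$ on the inert prime together with $x^{1-\sigma_p}\mapsto(x^{1-\sigma_p})^{\sigma_p}=x^{\sigma_p-1}=(x^{1-\sigma_p})^{-1}$. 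With that normalization pinned down, the remaining computation is a direct comparison of the three signs in the limit formula \eqref{KolyDerivative}.
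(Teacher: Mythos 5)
Your treatment of the first claim matches the paper's: reindex the Kolyvagin sum \eqref{KolyDerivative} by $\mathrm{rec}_K(\beta)$, isolate the boundary term $\mathrm{Trace}_{H_{cp^n}/H_c}\bigl({^{\psi}\hspace{-.5mm}P_{n}}\bigr)\otimes\beta$, and kill it with Lemma~\ref{compatiblePOINTS}; the paper packages the same reindexing step by citing Gross's formula, but the content is the same.

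For the second claim you and the paper diverge, and the divergence hides a gap. The paper handles the eigenvalue computation by citing \cite[Corollary~2.15(2)]{MumfordHeegner} as a black box; you instead propose a from-scratch decomposition of the eigenvalue into a product $\varepsilon(E/\Q)\cdot a_p(E)\cdot(-1)$, attributing the three factors to the action of complex conjugation on the modular parametrization, the $U_p$-eigenvalue encoded in the Steinberg enhancement, and the inversion of $f_\psi$ under $\sigma_p$. The plan is sensible and the total agrees with the statement, but the sign chase is only sketched, and the ambiguity you flag at the end --- whether the $-1$ from $f_\psi^{\sigma_p}=f_\psi^{-1}$ and the $-1$ from $\tau$ acting by inversion on $\widehat{K}_{p,1}^\times$ are a single sign or two independent ones --- is precisely where the argument could break: two genuinely independent $-1$'s would multiply to $+1$. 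Until you trace how complex conjugation transforms the full datum $(z_\psi,\tau_\psi,\overline{\tau}_\psi)$ through the conjugate embedding, and how that interacts both with the normalization $x\mapsto x^{1-\sigma_p}$ and with the anti-symmetry $c_\varphi(z\otimes e)=-c_\varphi(z\otimes\overline{e})$ from Proposition~\ref{constructionMeasure}, you cannot rule out the eigenvalue $+a_p(E)\varepsilon(E/\Q)$. This is exactly the bookkeeping the paper delegates to a reference where it has been done once and for all; to make your argument self-contained you would have to reproduce that computation rather than enumerate the expected contributions.
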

\begin{proof}
 Using the expression \eqref{KolyDerivative} of  $\mathcal{Q}_{\psi}$ in terms of Kolyvagin derivatives, the first claim follows from the fact that $\mrm{Trace}_{H_{cp^n}/H_c}({^{\psi}\hspace{-.5mm}P_{n}})=0$  for any $n\ge1$ by Lemma \ref{compatiblePOINTS}, and the formula \cite[Equation 3.5]{Gross}.
The second claim is a special case of \cite[Corollary 2.15(2)]{MumfordHeegner}.
\end{proof}

\subsubsection{Image in Galois cohomology.}
As $\widehat{K}_{p,1}^\times$ is a free $\Z_p$-module of rank one, we deduce from Corollary \ref{eigenspacePOS} and flat base change that the mock plectic invariant belongs to
    \[
\mathcal{Q}_{\psi}\in \left(E(H_{cp^\infty})^{\wedge}_{p}\right)^{G_{H_c}} \otimes_{\Z_p} \widehat{K}_{p,1}^\times.
\]
We can now relate $\mathcal{Q}_{\psi}$ to the arithmetic of $E$ over $H_c$ using Kummer maps. More precisely, when the mod $p$ Galois representation attached to $E_{/\Q}$ is surjective, one can use the Kummer maps 
\[
\big(E(H_{cp^{m}})/p^{n}\big)^{G_{H_c}}\longrightarrow\mrm{H}^1(H_{cp^{m}}, E[p^n])^{G_{H_c}},
\]
together with the isomorphisms $\mrm{H}^1(H_{cp^{m}}, E[p^n])^{G_{H_c}}\cong \mrm{H}^1(H_c, E[p^n])$ arising from the inflation-restriction exact sequence (\cite{DarmonFornea}, Lemma 3.6), to produce a homomorphism
\begin{equation}\label{globalkummer}
\left(E(H_{cp^\infty})^{\wedge}_{p}\right)^{G_{H_c}} \too \mrm{H}^{1}(H_c,V_p(E)).
\end{equation}
Moreover, since $E$ has multiplicative reduction at $p$, Tate's uniformization and the local Kummer map induce an injection 
\begin{equation}\label{localkummer}
\widehat{K}_{p,1}^{\times} \longhookrightarrow \mrm{H}^{1}_f(K_p,V_p(E))^{-a_p}
\end{equation}
where $\mrm{H}^{1}_f(K_p,V_p(E))^{-a_p}\subseteq \mrm{H}^{1}_f(K_p,V_p(E))$ denotes the subspace on which the non-trivial element $\sigma_p$ of $\mrm{Gal}(K_p/\Q_p)$ acts via multiplication with $-a_p(E)$.
The tensor product of the homomorphisms \eqref{globalkummer} and \eqref{localkummer} gives
\begin{equation}\label{globallocalkummer}
\left(E(H_{cp^\infty})^{\wedge}_{p}\right)^{G_{H_c}} \otimes_{\Z_p} \widehat{K}_{p,1}^\times
\too 
\mrm{H}^1(H_c,V_p(E))\otimes_{\Q_p}\mrm{H}_f^1(K_p,V_p(E))^{-a_p}.
\end{equation}
By a slight abuse of notation we will denote also by  $\mathcal{Q}_\psi$ the image of the mock plectic invariant under the homomorphism of \eqref{globallocalkummer}.

\medskip
\noindent Set $\Delta_c:=\Gal(H_c/K)$ and consider a primitive character $\chi\colon \Delta_c \to\C^\times$.
Let $\Q^\chi_p$ be the finite extension of $\Q_p$ generated by the values of $\chi$ and denote by $e_\chi\in\Q^\chi_p[\Delta_c]$ the idempotent satisfying $e_\chi[\sigma]=\chi(\sigma)e_\chi$.
For any $\Q_p[\Delta_c]$-module $M$ we set $M^\chi:=e_\chi (M\otimes_{\Q_p}\Q_p^\chi)$ and define
\[
\mathcal{Q}^\chi:=e_\chi \mathcal{Q}_{\psi}\hspace{.7mm}\in\hspace{.7mm} \mrm{H}^1\big(H_c, V_p(E)\big)^\chi \otimes_{\Q_p}\mrm{H}_f^1(K_p,V_p(E))^{-a_p}.
\]
\begin{remark}
    Mock plectic invariants belong to the tensor product of \emph{global} and local Galois cohomology groups, while the plectic Stark--Heegner points studied in \cite{PlecticInvariants}, \cite{plecticHeegner}, \cite{polyquadraticPlectic} could only be constructed as elements in tensor products of local Galois cohomology groups. It seems likely that -- at least for quadratic CM extensions -- the techniques developed in \cite{DarmonFornea} and in this paper will also allow to construct canonical lifts of plectic Stark--Heegner points to tensor products of global and local Galois cohomology groups.
The authors hope to return to this question in the future.

\end{remark}
The next proposition gives a criterion for the mock plectic invariant to be a Selmer class in terms of the vanishing of an appropriate $L$-value.
\begin{proposition}\label{SelmerProp}
    We have 
    \[
   \mathcal{Q}^\chi\hspace{.5mm}\in\hspace{.5mm} \mrm{H}^1_f\big(H_c, V_p(E)\big)^\chi\otimes_{\Q_p}\mrm{H}_f^1(K_p,V_p(E))^{-a_p}\quad\iff\quad L(E/K,\chi,1)=0.
    \]
\end{proposition}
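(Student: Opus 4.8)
The plan is to relate the local behavior of $\mathcal{Q}^\chi$ at the primes above $p$ to a Gross–Zagier / Waldspurger-type formula, and to exploit the fact that the mock plectic invariant records a ``$p$-adic derivative'' of Heegner points that is insensitive to the non-$p$-adic Selmer conditions. First I would recall that, by the construction of Section~\ref{Kolder}, the class $\mathcal{Q}^\chi$ is obtained from the trace-compatible system of Heegner points $\{{}^{\psi}P_n\}$ in $E(H_{cp^\infty})\otimes\Z_p$ via the global Kummer map \eqref{globalkummer}. The global Kummer image of any honest rational point automatically lies in the Bloch–Kato Selmer group away from $p$; at $p$ it lands in the \emph{geometric} (i.e.\ $\mrm{H}^1_f$) local subspace precisely because we have twisted by $\widehat{K}^\times_{p,1}$ and used Tate uniformization as in \eqref{localkummer}. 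So in fact $\mathcal{Q}^\chi$ is \emph{always} locally finite at all primes $v\nmid p$, and the content of the proposition is entirely about the local condition at the prime of $H_c$ above $p$ (equivalently, about $K_p$).

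Next I would make precise what the local class at $p$ is. Writing $\mathfrak{p}$ for the prime of $H_c$ above $p$ (it is inert, so $H_{c,\mathfrak p}\supseteq K_p$), the image of $\mathcal{Q}^\chi$ in $\mrm{H}^1(H_{c,\mathfrak p},V_p(E))^\chi\otimes\mrm{H}^1_f(K_p,V_p(E))^{-a_p}$ is computed by \emph{localizing the limit \eqref{KolyDerivative}}: the points ${}^\psi P_n$ localize to classes in $\mrm{H}^1(K_p,V_p(E))$ whose position relative to the peu-ramifié line $\mrm{H}^1_f$ is governed, through Tate's parametrization, by the $p$-adic valuation of the $p$-units in the Tate period. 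This is where Nekovář's analysis of the local Kummer map (\cite{NekCanadian}, Proposition~2.7.12), invoked in the introduction, enters: it identifies the singular quotient $\mrm{H}^1(K_p,V_p(E))/\mrm{H}^1_f$ with $\Q_p$ via the valuation map, and shows that the image of $\mathcal{Q}^\chi$ there is a multiple of the value of a $p$-adic $L$-function — or, more robustly, that it vanishes iff the corresponding complex $L$-value $L(E/K,\chi,1)$ vanishes. Concretely I would run the standard dichotomy: if $L(E/K,\chi,1)\neq 0$ then by Gross–Zagier–Zhang the $\chi$-Heegner point is non-torsion with non-trivial local image at $p$ (so $\mathcal{Q}^\chi\notin \mrm{H}^1_f$ at $p$), whereas if $L(E/K,\chi,1)=0$ the Heegner point is torsion/trivial and the Kolyvagin-derivative process produces a class that is everywhere locally finite, \emph{including} at $p$ — the latter being exactly the reciprocity/norm-relation computation carried out in \cite[Lemma~3.6]{DarmonFornea} combined with Lemma~\ref{compatiblePOINTS}.

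For the forward implication (Selmer $\Rightarrow$ $L=0$) I would argue by contraposition using Gross–Zagier: if $L(E/K,\chi,1)\neq 0$, the classical Heegner point $P_\chi\in E(H_c)^\chi$ is of infinite order, and then the explicit reciprocity law relating $\mathcal{Q}_\psi$ to the Heegner point — essentially that the ``bottom layer'' of the compatible system is $P_\chi$ up to the Euler factor $(a_p-a_p^{-1})$ or similar (note $a_p=\pm1$ here since $E$ is multiplicative at $p$) — shows the local class at $p$ is non-zero in the singular quotient, contradicting membership in $\mrm{H}^1_f$. For the reverse implication ($L=0\Rightarrow$ Selmer) I would show all local conditions hold: away from $p$ this is automatic as above; at $p$ one uses that, when $L(E/K,\chi,1)=0$, the bottom Heegner point vanishes in $E(H_c)^\chi\otimes\Q_p$, hence ${}^\psi P_1$ is a norm from $H_{cp}$ (by Lemma~\ref{compatiblePOINTS} its trace is already $0$), which forces the limiting Kolyvagin class to be finite at $p$ by Nekovář's criterion. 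The main obstacle I expect is the last point — proving the local-at-$p$ finiteness when $L=0$ — because it requires more than the formal norm-compatibility: one must show the ``derivative'' class does not acquire a non-trivial singular part in the limit, and controlling this genuinely uses the multiplicative reduction input (Tate uniformization plus \cite[Prop.~2.7.12]{NekCanadian}) rather than a purely Galois-cohomological argument.
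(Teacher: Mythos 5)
Your reduction to the local condition at the primes above $p$ is the same first step as the paper's, and your instinct that the singular quotient at $p$ must be compared with a special-value formula is in the right spirit. But the mechanism you propose for that comparison does not work, and this is where the actual content of the proposition lies. In the present setting the sign is $\varepsilon(E/K)=+1$ and the character $\chi$ is anticyclotomic, so the Gross--Zagier--Zhang formula is not applicable: it computes the \emph{first derivative} $L'(E/K,\chi,1)$ in sign $-1$ situations, whereas here $L(E/K,\chi,1)$ itself is the quantity at stake. Worse, the ``classical Heegner point $P_\chi\in E(H_c)^\chi$'' that your contrapositive argument relies on does not exist in the form you use it: by Lemma~\ref{compatiblePOINTS} one has $\mrm{Trace}_{H_{cp}/H_c}({^{\psi}\hspace{-.5mm}P_{1}})=0$, so the bottom layer of the compatible system has trivial trace to $H_c$ and cannot be ``$P_\chi$ up to an Euler factor''. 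Likewise, \cite[Proposition 2.7.12]{NekCanadian} is purely a comparison of local conditions (it is used in the paper only in Proposition~\ref{localPROP}, to verify ordinarity of the Euler-system classes); it says nothing about $L$-values and cannot identify the singular image of $\mathcal{Q}^\chi$ with any $L$-quantity.

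The missing ingredient is an explicit special-value formula in the exceptional-zero/Cerednik--Drinfeld setting: since $p$ is inert in $K$ and divides the level, the Shimura curve is a Mumford curve at $p$, and the image of the derived system of CM points in the singular quotient at $p$ (equivalently, in the component group, computed on the definite quaternion side of discriminant $pN^-$) is, up to nonzero factors, a Gross-type theta value whose nonvanishing is \emph{equivalent} to $L(E/K,\chi,1)\neq0$. This is exactly what the paper invokes: the proof consists of reducing the local condition at $p$ to \cite[Theorem 4.3]{BD-Edix}, following the argument of \cite[Theorem 3.8]{DarmonFornea}. Without this input (or an equivalent Waldspurger/Gross formula on the definite side), neither implication of the stated equivalence follows from the norm relations and local Kummer-theory considerations you describe.
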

\begin{proof}
    It suffices to check the local condition at the primes above $p$. The claim is then reduced to \cite[Theorem 4.3]{BD-Edix} as in the proof of \cite[Theorem 3.8]{DarmonFornea}.
\end{proof}
\begin{remark}
     When the $L$-value $L(E/K,\chi,1)$ is non-zero, the mock plectic invariant $\mathcal{Q}^\chi$ is not a Selmer class by Proposition \ref{SelmerProp}, and can be used to prove instances of the BSD-conjecture in rank $0$ as explained in Theorem 3.9 and Remark 3.10 of \cite{DarmonFornea}.
\end{remark}

   \noindent  When $c=1$ and $\chi=\mathbbm{1}$ is the trivial character $ \mrm{H}^1(H_1, V_p(E))^{\mathbbm{1}}\cong \mrm{H}^1(K, V_p(E))$.
    In this case we denote the mock plectic invariant $\mathcal{Q}^{\mathbbm{1}}$ by 
    \begin{equation}\label{MPI}
    \mathcal{Q}_K\in \mrm{H}^1(K, V_p(E))\otimes_{\Q_p}\mrm{H}_f^1(K_p,V_p(E))^{-a_p}.
    \end{equation}

\subsection{Conjectures}
We extend the conjectures of \cite[Section 3.8]{DarmonFornea} to the setting considered in this paper.
Put $E^+ :=E$, denote by $E^-$ the quadratic twist of $E$ attached to $K$, and define
\[
\delta_p^\pm:=\begin{cases}
    1&\mbox{if}\ a_p(E^\pm)=+1\\
    0&\mbox{if}\ a_p(E^\pm)=-1.
    \end{cases}
\]
For a positive integer $c$ prime to $N$ and  a primitive character $\chi\colon\Delta_c\to\C^\times$ we set
\[
r_\mrm{alg}(E/K,\chi):=\mrm{dim}_\C\hspace{1mm} \mrm{H}^0\big(K, E(H_c)\otimes\C(\chi^{-1})\big).
\]
Since the extension $H_c/\bb{Q}$ is generalized dihedral, when $\chi\colon\Delta_c\to\C^\times$ is quadratic its induction is reducible. There are quadratic characters $\chi_\pm\colon\mrm{Gal}(H_c/\bb{Q})\to\bb{C}^\times$ such that
\[
\mrm{Ind}_{G_K}^{G_\bb{Q}}(\chi)=\chi_+\oplus\chi_-
\]
and which are distinguished by their value $\chi_\pm(\mrm{Fr}_p)=\pm1$ on the conjugacy class of Frobenius at $p$.
The quantities
\[
r_\mrm{alg}^\pm(E/K,\chi):=\mrm{dim}_\C\hspace{1mm} \mrm{H}^0\big(\bb{Q}, E(H_c)\otimes\C(\chi_\pm)\big)
\]
are then related by the equality $r_\mrm{alg}(E/K,\chi)=r_\mrm{alg}^+(E/K,\chi)+r_\mrm{alg}^-(E/K,\chi)$.
\begin{conjecture}\label{Conj1}
  Suppose that  $L(E/K,\chi,1)= 0$. If $\chi\colon\Delta_c\to\C^\times$ is quadratic, then
  \[
  \mathcal{Q}^\chi\neq0\quad\iff\quad   \mrm{max}\big\{r_\mrm{alg}^\pm(E/K,\chi)+\delta_p^\pm\big\}=2.
  \]
  If $\chi\colon\Delta_c\to\C^\times$ is not quadratic, then
  \[
  \mathcal{Q}^\chi\neq0\quad\iff\quad   r_\mrm{alg}(E/K,\chi)=2.
  \]
\end{conjecture}
\begin{remark}
    When  $\chi\colon\Delta_c\to\C^\times$ is quadratic, it is easy to see that
    \[
    \mrm{max}\big\{r_\mrm{alg}^\pm(E/K,\chi)+\delta_p^\pm\big\}=2\quad\implies\quad r_\mrm{alg}(E/K,\chi)=2.
    \]
    Thus, the non-vanishing of mock plectic invariants for quadratic characters conjecturally encodes more information than in the  non-quadratic case (see also \cite[Prop. 3.1 $\&$ Theorem 5.1]{polyquadraticPlectic}).
\end{remark}

\begin{remark}
A weaker version of Conjecture \ref{Conj1} can be formulated using coranks of $p$-primary Selmer groups instead of algebraic ranks of elliptic curves. 
The two versions are equivalent if Shafarevich--Tate groups of elliptic curves over number fields are finite.
The main contribution of this article consists in proving one implication of the weaker conjecture for the trivial character (see Theorem \ref{mainTHM}).
\end{remark}

\noindent We conclude the section by describing a conjectural relation between the mock plectic invariant $ \mathcal{Q}^\chi$ and global points in $E(H_c)^\chi$.
Denote by
\[
\mathcal{K}\colon E(H_c)\too \mrm{H}^1_f(H_c,V_p(E))
\]
the Kummer map.
Then, as $p$ is totally split in $H_c/K$, we can choose an embedding $H_c\into K_p$ and consider the associated localization map 
\[
\mrm{loc}_p\colon \mrm{H}^1_f(H_c,V_p(E)) \too \mrm{H}^1_f(K_p,V_p(E)).
\]
We define the $(-a_p)$-isotypic part of the local Kummer map by
\[
\mathcal{K}^{-a_p}_p:= (1-a_p(E)\cdot\sigma_p)\circ\mrm{loc}_p\circ\cal{K}. 
\]
\begin{conjecture}\label{Conj2}
 If $L(E/K,\chi,1)= 0$, then
  $\mathcal{Q}^\chi$ is in the image of the regulator
\[
	\exterior{2}E(H_c)^\chi\too \mrm{H}^1_f\big(H_c, V_p(E)\big)^\chi\otimes_{\Q_p}\mrm{H}_f^1(K_p,V_p(E))^{-a_p},
 \]
 \[
P\wedge Q\mapsto \mathcal{K}(P)\otimes \mathcal{K}_p^{-a_p}(Q)- \mathcal{K}(Q)\otimes \mathcal{K}_p^{-a_p}(P).
\]
\end{conjecture}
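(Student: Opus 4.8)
The plan is to strip off the one‑dimensional local factor of $\mathcal{Q}^\chi$, recognise its global part as the Kummer image of a genuine Mordell--Weil element, reduce Conjecture \ref{Conj2} to a single reciprocity (self‑orthogonality) statement for the localization at $p$ of that global part, and finally prove the reciprocity by relating $\mathcal{Q}^\chi$ to the leading term of a $p$‑adic $L$‑function. We may of course assume $\mathcal{Q}^\chi\ne0$, since otherwise there is nothing to prove.

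\smallskip
\noindent\textit{Step 1: reduction to the global part.} Fix a $\Z_p$‑generator $u$ of $\widehat{K}_{p,1}^\times$; by \eqref{localkummer} it gives an isomorphism $\Q_p\cdot u\xrightarrow{\sim}\mrm{H}^1_f(K_p,V_p(E))^{-a_p}$, so that we may write $\mathcal{Q}^\chi=q_\chi\otimes u$ with $q_\chi\in\mrm{H}^1(H_c,V_p(E))^\chi$, and in fact $q_\chi\in\mrm{H}^1_f(H_c,V_p(E))^\chi$ by Proposition \ref{SelmerProp}, since $L(E/K,\chi,1)=0$. Carrying as a running hypothesis — natural given the Mordell--Weil formulation — that $\Sh(E/H_c)[p^\infty]$ is finite, the Kummer map is an isomorphism $\big(E(H_c)\otimes\Q_p\big)^\chi\xrightarrow{\sim}\mrm{H}^1_f(H_c,V_p(E))^\chi$, hence $q_\chi=\mathcal{K}(z_\chi)$ for a unique $z_\chi$. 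Thus $\mathcal{Q}^\chi$ already lies in the image of the ``full'' regulator from $\big(E(H_c)\otimes\Q_p\big)^\chi\otimes\mrm{H}^1_f(K_p,V_p(E))^{-a_p}$; the content of the conjecture is the refinement that it lies in the smaller image of the antisymmetrised map out of $\exterior{2}E(H_c)^\chi$.

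\smallskip
\noindent\textit{Step 2: reformulation as a local reciprocity.} Writing $\mathcal{K}_p^{-a_p}(P)=\lambda_P\,u$ for $P\in\big(E(H_c)\otimes\Q_p\big)^\chi$, the regulator sends $P\wedge Q$ to $\big(\lambda_Q\,\mathcal{K}(P)-\lambda_P\,\mathcal{K}(Q)\big)\otimes u$. A short computation, using only one‑dimensionality of the local factor and finiteness of $\Sh$, shows that as soon as some $P_0$ satisfies $\lambda_{P_0}\ne0$, the image of the regulator equals
\[
\big\{\,y\otimes u\ :\ y\in\mrm{H}^1_f(H_c,V_p(E))^\chi,\ \ (1-a_p(E)\sigma_p)\,\mrm{loc}_p(y)=0\,\big\}.
\]
So Conjecture \ref{Conj2} reduces to the single assertion $(1-a_p(E)\sigma_p)\,\mrm{loc}_p(q_\chi)=0$: the localization at $p$ of the \emph{global} part of the mock plectic invariant is trivial in the $(-a_p)$‑eigenspace. (The opposite case, $\mathcal{K}_p^{-a_p}\equiv0$ on $E(H_c)^\chi$, makes the regulator vanish and would force $\mathcal{Q}^\chi=0$; equivalently, part of the content is that $\mathcal{Q}^\chi\ne0$ produces a global point with nontrivial $(-a_p)$‑localization. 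I would obtain this, too, from the reciprocity of Step 3.)

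\smallskip
\noindent\textit{Step 3: proving the reciprocity (the main obstacle).} Starting from the Kolyvagin‑derivative formula \eqref{KolyDerivative}, $\mathcal{Q}^\chi=e_\chi\,\lim_{n}\sum_{\alpha\in\scr{U}_0/\scr{U}_n}({}^{\psi}\hspace{-0.5mm}P_n)^{\mrm{rec}_K(\alpha)}\otimes\alpha$, the aim is to identify $\mathcal{Q}^\chi$ with the leading term at the central point of a $p$‑adic $L$‑function attached to $E$, $K$ and $\chi$ — in the spirit of Bertolini--Darmon's exceptional‑zero formula and its refinements by Bertolini--Darmon--Prasanna and Darmon--Rotger. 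In the rank‑two situation that $L$‑function should vanish to order $\ge2$ at the centre, with one zero from the multiplicative reduction at $p$ and one from the vanishing of the motivic $L$‑value, and the factorization of its leading term as a $2\times2$ $p$‑adic height regulator of global points is exactly the assertion $(1-a_p(E)\sigma_p)\,\mrm{loc}_p(q_\chi)=0$. The hard part — and the reason this is still only a conjecture — is constructing and controlling that $p$‑adic $L$‑function: it requires deforming the CM points ${}^{\psi}\hspace{-0.5mm}P_n$ in a Hida family and proving a new reciprocity law comparing the $p$‑adic integral $\mathcal{Q}^\chi$ with a derivative of the resulting $L$‑function. A complementary, possibly softer route bypasses $L$‑functions: when $\mathcal{Q}^\chi\ne0$ the bipartite Euler system behind Theorem \ref{IMC} forces $\mrm{H}^1_f(H_c,V_p(E))^\chi$ to be two‑dimensional, spanned by explicit Heegner‑type classes, and one can try to compute $q_\chi$ against those generators directly, using Nekov\'a\v{r}'s analysis of the local Kummer map at $p$ (as in Proposition \ref{localPROP}) to control the $(-a_p)$‑component; I expect this to recover decomposability but to still need the local reciprocity input for the precise normalization that Conjecture \ref{Conj2} demands.
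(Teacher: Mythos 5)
You are trying to prove Conjecture \ref{Conj2}, which the paper itself does not prove: it is stated (together with Conjecture \ref{Conj1}) as an open prediction extending \cite[Section 3.8]{DarmonFornea}, so there is no argument in the text to compare yours against — and your proposal does not supply one either, as your own Step 3 concedes. The linear algebra in Steps 1--2 is fine as far as it goes: after fixing a generator $u$ of $\widehat{K}_{p,1}^\times$ and writing $\mathcal{Q}^\chi=q_\chi\otimes u$ with $q_\chi\in\mrm{H}^1_f(H_c,V_p(E))^\chi$ (Proposition \ref{SelmerProp}), you show that the conjecture is equivalent to the single identity $(1-a_p(E)\sigma_p)\,\mrm{loc}_p(q_\chi)=0$. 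But this equivalence is only obtained under two extra hypotheses that are not part of the statement and are themselves deep open problems: (i) finiteness of $\Sh(E/H_c)[p^\infty]$, needed to write $q_\chi=\mathcal{K}(z_\chi)$ for a Mordell--Weil element $z_\chi$; and (ii) the existence of $P_0\in E(H_c)^\chi$ with $\mathcal{K}_p^{-a_p}(P_0)\neq0$, a non-degeneracy of the $(-a_p)$-localization on the $\chi$-part of the Mordell--Weil group of the same nature (and difficulty) as the conjecture itself — without it the regulator is identically zero and your reduction gives nothing.

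The genuine gap is then Step 3: the identity $(1-a_p(E)\sigma_p)\,\mrm{loc}_p(q_\chi)=0$ is precisely the arithmetic content of the conjecture, and you do not prove it. The proposed route — identifying $\mathcal{Q}^\chi$ with the leading term of a rank-two $p$-adic $L$-function via a ``new reciprocity law'' obtained by deforming the points ${}^{\psi}\hspace{-0.5mm}P_n$ in a Hida family — is a program, not an argument: neither the $p$-adic $L$-function nor the reciprocity law is constructed, and the alternative ``softer route'' through the bipartite Euler system of Theorem \ref{IMC} is likewise only asserted (note also that the Euler-system machinery in the paper controls Selmer coranks, not the fine local position of $\mrm{loc}_p(q_\chi)$ inside $\mrm{H}^1_f(K_p,V_p(E))$, which is what you need). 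What you have is a conditional reformulation of Conjecture \ref{Conj2}; as such it is a reasonable observation worth recording, but it should not be presented as a proof.
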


\section{Iwasawa theory}
In the second part of this paper we establish one inclusion of Perrin-Riou's Heegner point main conjecture for elliptic curves with multiplicative reduction at $p$ using bipartite Euler systems. As a consequence, we obtain an upper bound on the $p$-Selmer rank of $E_{/K}$ from the non-vanishing of the mock plectic invariant $\cal{Q}_K$.

\subsubsection{Selmer groups.}
We write $\mrm{Sel}_{p^\infty}(E^{\pm}/L)\subseteq \mrm{H}^1(L,E^{\pm}[p^\infty])$ for the $p$-primary Selmer group of $E^{\pm}$ over a subfield $L\subseteq \overline{\Q}$ and set
\[
r_p(E^{\pm}/L):=\mrm{corank}_{\Z_p}\hspace{0.5mm} \mrm{Sel}_{p^\infty}(E^{\pm}/L).
\]
Let $\Delta$ denote the torsion subgroup of $\Gal(H_{p^\infty}/K)$.
By definition, the fixed field
\[
K_\infty:=(H_{p^\infty})^\Delta
\] 
is the anticyclotomic $\Z_p$-extension of $K$.
For non-negative integers $n$ we set $K_n:=H_{p^n}\cap K_\infty$, and consider the Iwasawa algebra $\Lambda:=\Z_p[\![\mathcal{G}]\!]$ of $\mathcal{G}:=\Gal(K_\infty/K)$.
Since $E[p^\infty](K_\infty)$ is finite and $\mathcal{G}$ is topologically generated by one element, it follow from the inflation-restriction exact sequence that the kernel of the restriction map $\mrm{H}^1(K,E[p^\infty])\to \mrm{H}^1(K_\infty,E[p^\infty])$ is finite.
Hence, the commuting diagram with exact rows
\[\xymatrix{
0\ar[r]& \mrm{Sel}_{p^\infty}(E/K)\ar[r]\ar[d]&\mrm{H}^1(K,E[p^\infty])\ar[d]^-{\mrm{res}}\\
0\ar[r]& \mrm{Sel}_{p^\infty}(E/K_\infty)^{\mathcal{G}}\ar[r]&\mrm{H}^1(K_\infty,E[p^\infty])^{\mathcal{G}}\\
}\]
allows us to deduce the following inequality
\begin{equation}\label{bound1}
r_p(E/K) \le \mrm{corank}_{\Z_p}\hspace{0.5mm}\mrm{Sel}_{p^\infty}(E/K_\infty)^{\mathcal{G}}.
\end{equation}
Similarly to \cite[Section 2.2]{KolyvaginES}, we consider the $\Lambda[G_K]$-module
\[
\mbf{T}:=\varprojlim_n \Big(\mrm{Ind}_{K_n/K}\hspace{1mm}T_p(E)\Big)
\]
and its Pontryagin dual
\[
\quad\mbf{W}:=\varinjlim_n\Big(\mrm{Ind}_{K_n/K}\hspace{1mm}E[p^\infty]\Big).
\]
The module $\mbf{T}$ is naturally isomorphic to the tensor product $T_p(E)\otimes_{\Z_p}\Lambda$ with a diagonal $G_K$-action,
while $\mbf{W}$ is the induction from $G_K$ to $G_{K_\infty}$ of $E[p^\infty]$.
In particular, Shapiro's lemma implies that
\[
\mrm{H}^{1}(K,\mbf{W})=\mrm{H}^{1}\left(K_\infty,E[p^\infty]\right).
\]
By our assumptions, $E_{/K}$ has split multiplicative reduction at every $\cal{O}_K$-prime $v\mid pN^-$.
Hence, for every $k\ge1$ the $p^k$-torsion subgroup $E[p^k]$, seen as a $G_{K_v}$-module, fits in the exact sequence 
\begin{equation}\label{SESord}\xymatrix{
0\ar[r]& \mu_{p^k}\ar[r]& E[p^k]\ar[r]& \Z/p^k\Z\ar[r]& 0.
}\end{equation}
We denote by $\mrm{Fil}_v T_p(E)\cong \Z_p(1)$ and $\mrm{Fil}_v E[p^\infty]\cong\mu_{p^\infty}$ the $G_{K_v}$-submodules of $T_p(E)$ and $E[p^\infty]$ respectively determined by the short exact sequence \eqref{SESord}, and consider the $G_{K_v}$-modules
\[
\mrm{Fil}_v(\mbf{T}):=\varprojlim_n \Big(\mrm{Ind}_{K_n/K}\hspace{1mm}\mrm{Fil}_vT_p(E)\Big),\quad\mbox{and}\quad\mrm{Fil}_v(\mbf{W}):=\varinjlim_n \Big(\mrm{Ind}_{K_n/K}\hspace{1mm} \mrm{Fil}_vE[p^\infty]\Big).
\]
The Selmer structure $\mathcal{F}_\Lambda$ on $\mbf{M}=\mbf{T},\mbf{W}$ is defined by taking the unramified local condition at primes of $K$ not dividing $pN^-$, and the ordinary local condition at all $v\mid pN^-$, i.e., the image of
\[
\mrm{H}^1(K_v,\mrm{Fil}_v(\mbf{M}))\to \mrm{H}^1(K_v,\mbf{M}).
\]
If $\mrm{Sel}_{\mathcal{F}_\Lambda}(K,\mbf{M})\subseteq \mrm{H}^1(K,\mbf{M})$ denotes the associated Selmer group,  we set
\begin{equation*}
\scr{S}:=\mrm{Sel}_{\mathcal{F}_\Lambda}(K,\textbf{T})\quad\mbox{and}\quad \cal{X}:=\Hom_{\Z_p}\big(\mrm{Sel}_{\mathcal{F}_\Lambda}(K,\textbf{W}),\Q_p/\Z_p\big).
\end{equation*}
\noindent Interestingly, it is possible to bound the $p$-Selmer rank of $E_{/K}$ in terms of the coinvariants of $\cal{X}$:
\begin{lemma}\label{Selmerbound}
We have
\[
r_p(E/K)\le\mrm{rank}_{\Z_p}\hspace{0.5mm} \mathcal{X}_\mathcal{G}.
\]
\end{lemma}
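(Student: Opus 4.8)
The plan is to combine the inequality \eqref{bound1} with a control-theory argument relating $\mrm{Sel}_{p^\infty}(E/K_\infty)^{\mathcal{G}}$ to the $\mathcal{G}$-coinvariants of $\cal{X}$. First I would recall from Shapiro's lemma that $\mrm{H}^1(K,\mbf{W}) = \mrm{H}^1(K_\infty, E[p^\infty])$, and more generally that the local cohomology groups $\mrm{H}^1(K_v,\mbf{W})$ decompose as a sum over the places of $K_\infty$ above $v$. The key point is that the ordinary/unramified local conditions defining $\mathcal{F}_\Lambda$ on $\mbf{W}$ match, under this identification, the local conditions cutting out $\mrm{Sel}_{p^\infty}(E/K_\infty)$: at primes $v \nmid pN^-$ the unramified condition on $\mbf{W}$ corresponds to the unramified (equivalently finite) condition over $K_\infty$, and at $v \mid pN^-$ the ordinary condition coming from $\mrm{Fil}_v(\mbf{W}) \cong \mu_{p^\infty}$ reproduces, via Tate's uniformization, the Bloch--Kato/finite condition defining the $p$-primary Selmer group in the multiplicative-reduction case. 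This yields a canonical identification
\[
\mrm{Sel}_{\mathcal{F}_\Lambda}(K,\mbf{W}) \;\cong\; \mrm{Sel}_{p^\infty}(E/K_\infty),
\]
hence $\cal{X} \cong \Hom_{\Z_p}(\mrm{Sel}_{p^\infty}(E/K_\infty),\Q_p/\Z_p)$ as $\Lambda$-modules.

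Next I would pass to $\mathcal{G}$-(co)invariants. Pontryagin duality exchanges invariants and coinvariants, so $\cal{X}_{\mathcal{G}} = \Hom_{\Z_p}\big(\mrm{Sel}_{p^\infty}(E/K_\infty)^{\mathcal{G}}, \Q_p/\Z_p\big)$, and therefore
\[
\mrm{rank}_{\Z_p}\hspace{0.5mm}\cal{X}_{\mathcal{G}} \;=\; \mrm{corank}_{\Z_p}\hspace{0.5mm}\mrm{Sel}_{p^\infty}(E/K_\infty)^{\mathcal{G}}.
\]
Combining this equality with \eqref{bound1} gives exactly $r_p(E/K) \le \mrm{rank}_{\Z_p}\cal{X}_{\mathcal{G}}$, which is the assertion of the lemma. (One could alternatively phrase the argument entirely on the $\mbf{T}$ side using $\scr{S}$ and a Poitou--Tate/global-duality comparison between $\scr{S}_{\mathcal{G}}$ and the dual Selmer group, but the $\mbf{W}$-route above is the most direct.)

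The main obstacle I anticipate is the careful verification that the local conditions genuinely match at the bad primes $v \mid pN^-$ — in particular that, under Tate's uniformization, the image of $\mrm{H}^1(K_v,\mrm{Fil}_v(\mbf{W})) = \mrm{H}^1(K_v,\mu_{p^\infty})$-type classes in $\mrm{H}^1(K_v,\mbf{W})$ really is the finite (Selmer) subspace, uniformly in the tower $K_\infty/K$. This is where the split multiplicative reduction at every $v \mid pN^-$ and the exact sequence \eqref{SESord} are used, together with the observation that the decomposition group of $v$ in $\mathcal{G}$ is trivial for $v \nmid p$ (so the local cohomology over $K_\infty$ is just an induced module and Shapiro applies cleanly) while at $v \mid p$ one invokes the ordinary filtration directly. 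A secondary, more routine point is controlling the finite kernel in \eqref{bound1}, which costs nothing for ranks since it is a finite group. I expect these identifications to be essentially formal given the setup already laid out, so the proof should be short.
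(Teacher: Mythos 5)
Your skeleton is the paper's: reduce via \eqref{bound1} to bounding $\mrm{corank}_{\Z_p}\hspace{0.5mm}\mrm{Sel}_{p^\infty}(E/K_\infty)^{\mathcal{G}}$, compare the classical Selmer group over $K_\infty$ with $\mrm{Sel}_{\mathcal{F}_\Lambda}(K,\mbf{W})$, and dualize. The one step you should weaken is the asserted ``canonical identification'' $\mrm{Sel}_{\mathcal{F}_\Lambda}(K,\mbf{W})\cong\mrm{Sel}_{p^\infty}(E/K_\infty)$. What is true, and what the paper extracts from \cite[Propositions 4.1 \& 4.3]{CoatesGreenberg}, is only the $\mathcal{G}$-equivariant inclusion $\mrm{Sel}_{p^\infty}(E/K_\infty)\subseteq\mrm{Sel}_{\mathcal{F}_\Lambda}(K,\mbf{W})$. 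Indeed, at a place $w$ of $K_\infty$ with $w\nmid p$ the classical local condition is the image of the Kummer map, which vanishes because $E(K_{\infty,w})\otimes\Q_p/\Z_p=0$ in residue characteristic $\ell\neq p$, whereas $\mathcal{F}_\Lambda$ only imposes the unramified (resp.\ ordinary) condition there; those subgroups of $\mrm{H}^1(K_v,\mbf{W})$ need not vanish, so the $\Lambda$-adic Selmer group is in general strictly larger than the classical one and the matching of local conditions you invoke away from $p$ fails. (Your parenthetical that the decomposition group in $\mathcal{G}$ of every $v\nmid p$ is trivial is also only correct for primes inert in $K$, such as $v\mid N^-$; primes split in $K$ are finitely decomposed in the anticyclotomic tower.) The genuinely deep input, the Coates--Greenberg theory of deeply ramified extensions, is needed and available only at $v\mid p$, exactly as you describe.

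None of this damages the lemma, because the inequality only needs the containment: taking $\mathcal{G}$-invariants and then Pontryagin duals gives
\[
\mrm{corank}_{\Z_p}\hspace{0.5mm}\mrm{Sel}_{p^\infty}(E/K_\infty)^{\mathcal{G}}\le\mrm{corank}_{\Z_p}\hspace{0.5mm}\mrm{Sel}_{\mathcal{F}_\Lambda}(K,\mbf{W})^{\mathcal{G}}\le\mrm{rank}_{\Z_p}\hspace{0.5mm}\mathcal{X}_{\mathcal{G}},
\]
which combined with \eqref{bound1} is precisely the paper's proof. So: same approach, but replace the claimed isomorphism (and the resulting equality of coranks) by the one-sided inclusion; as stated, your identification is an overclaim rather than a usable intermediate step.
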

\begin{proof}
By \eqref{bound1} it is enough to bound the corank of $\mrm{Sel}_{p^\infty}(E/K_\infty)^{\mathcal{G}}$.
The $p$-primary Selmer group $\mrm{Sel}_{p^\infty}(E/K_\infty)$ is a $\mathcal{G}$-submodule of $\mrm{Sel}_{\mathcal{F}_\Lambda}(K,\mbf{W})$ by \cite[Propositions 4.1 $\&$ 4.3]{CoatesGreenberg}.
Hence, 
\[
\mrm{corank}_{\Z_p}\hspace{1mm}\mrm{Sel}_{p^\infty}(E/K_\infty)^\mathcal{G}\le \mrm{corank}_{\Z_p} \hspace{0.5mm}\mrm{Sel}_{\mathcal{F}_\Lambda}(K,\mbf{W})^\mathcal{G} \le \mrm{rank}_{\Z_p}\hspace{0.5mm}\mathcal{X}_\mathcal{G}.
\]
\end{proof}

\noindent The $\Z_p$-rank of $\cal{X}_\cal{G}$ can be studied by means of Perrin-Riou's Heegner point main conjecture.

\subsection{Heegner point main conjecture}
A rational prime $\ell\nmid N$ inert in $K$ is $k$-admissible if $\ell^2\not\equiv1 \pmod{p}$, and if there is a split short exact sequence of $\Gal(K_\ell^\mrm{un}/K_\ell)$-modules
\begin{equation}\label{admissibleses}
\xymatrix{0\ar[r]&\mu_{p^k}\ar[r]& E[p^k]\ar[r]& \Z/p^k\Z\ar[r]& 0.
}\end{equation}
Therefore, for a $k$-admissible prime $\ell$ there exists a unique $\epsilon_\ell\in\{\pm1\}$ such that 
\begin{equation}\label{LRsign}
a_\ell(E)\equiv\epsilon_\ell\cdot(\ell+1)\pmod{p^k}.
\end{equation}
Set $\mbf{T}_k:=\mbf{T}/p^k\mbf{T}$. For every $k$-admissible prime $\ell$ we write $\mrm{H}^1_\mrm{unr}(K_\ell,\mbf{T}_k)\subseteq \mrm{H}^1(K_\ell,\mbf{T}_k)$ for the submodule of unramified classes and $\mrm{H}^1_\mrm{ord}(K_\ell,\mbf{T}_k)\subseteq \mrm{H}^1(K_\ell,\mbf{T}_k)$ for the ordinary submodule, that is, the image of the homomorphism $\mrm{H}^1(K_\ell,\mrm{Fil}_v(\mathbf{T}_k))\to \mrm{H}^1(K_\ell,\mbf{T}_k)$ induced by \eqref{admissibleses}.
 The cohomology groups 
\[
\mrm{H}^1_\mrm{unr}(K_\ell,\mbf{T}_k)\quad \mbox{and}\quad \mrm{H}^1_\mrm{ord}(K_\ell,\mbf{T}_k)
\]
are free of rank one over $\Lambda/p^k\Lambda$ (\cite{BipartiteES}, Lemma 3.12).

\smallskip
\noindent The set of square-free products of $k$-admissible primes is denoted by $\mathcal{N}_k$ and for any $m\in\mathcal{N}_k$ we can consider the Selmer group
\[
\mrm{Sel}_{m}(K,\mbf{T}_k)\subseteq\mrm{H}^1(K,\mbf{T}_k)
\]
of classes which are unramified outside $mpN^-$ and ordinary at every prime divisor of $mpN^-$.
We say that $m\in\mathcal{N}_k$ is \emph{definite} (resp.~\emph{indefinite}) if it has an odd (resp.~even) number of prime factors.
Let $\mathcal{N}_k^\mrm{def}$ and $\mathcal{N}_k^\mrm{ind}$ denote the subsets of $\mathcal{N}_k$ of definite and indefinite elements. 
We now recall the definition of bipartite Euler systems \`a la Howard:
\begin{definition}
A bipartite Euler system for $\mbf{T}$ consists of two collections 
\[
\big\{\kappa_m\in\mrm{Sel}_{m}(K,\textbf{T}_k)\ \big\lvert\ m\in\mathcal{N}_k^\mrm{ind}\big\},\qquad \big\{\lambda_m\in\Lambda/p^k\Lambda\ \big\lvert\ m\in\mathcal{N}_k^\mrm{def}\big\}
\]
for every $k>0$ compatible with respect to the inclusions $\mathcal{N}_{k+1}\subseteq\mathcal{N}_k$, the natural reduction maps $E[p^{k+1}]\xrightarrow{p} E[p^k]$, $\Lambda/p^{k+1}\Lambda\to \Lambda/p^k\Lambda$, and satisfying the \emph{explicit reciprocity laws}:
\begin{itemize}
    \item For any $m\ell\in\mathcal{N}_k^\mrm{ind}$ there is an isomorphism of $\Lambda$-modules $\mrm{H}^1_\mrm{ord}(K_\ell,\mbf{T}_k)\cong \Lambda/p^k\Lambda$ such that
    \[
    \mrm{loc}_\ell(\kappa_{m\ell})=\lambda_m.
    \]
     \item For any $m\ell\in\mathcal{N}_k^\mrm{def}$ there is an isomorphism of $\Lambda$-modules $\mrm{H}^1_\mrm{unr}(K_\ell,\mbf{T}_k)\cong \Lambda/p^k\Lambda$ such that
    \[
    \mrm{loc}_\ell(\kappa_{m})=\lambda_{m\ell}.
    \]
\end{itemize}
\end{definition}

\noindent Since the empty product lies in $\mathcal{N}_k^\mrm{ind}$ for every $k\ge 1$, a bipartite Euler system also gives a distinguished class 
\[
\kappa^\infty:=\varprojlim_k\kappa_1\hspace{1mm}\in\hspace{1mm} \mathscr{S}.
\]

\begin{theorem}\label{IMC}
A bipartite Euler system for $\mathbf{T}$ such that $\kappa^\infty\in\mathscr{S}$ is non-trivial implies that
\[
\mathscr{S}\cong \Lambda\quad\&\quad\mrm{rank}_\Lambda\hspace{0.5mm} \mathcal{X}=1.
\]
Moreover, for any height one prime $\mathcal{P}$ of $\Lambda$, the following inequality holds
\[
\mrm{ord}_\mathcal{P}\big(\mrm{char}(\mathcal{X}_{\Lambda\mbox{-}\mrm{tors}})\big)\le2\cdot\mrm{ord}_\mathcal{P}\big(\kappa^\infty,\mathscr{S}\big),
\]
 where $\mrm{ord}_\mathcal{P}\big(\kappa^\infty,\mathscr{S}\big):=\mrm{sup}\big\{n\ \lvert\ \kappa^\infty\in\mathcal{P}^n\mathscr{S}\big\}$ and $\mathcal{X}_{\Lambda\mbox{-}\mrm{tors}}$ denotes the $\Lambda$-torsion submodule of $\mathcal{X}$.
\end{theorem}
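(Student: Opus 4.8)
The plan is to invoke Howard's machinery for bipartite Euler systems \cite{BipartiteES} essentially verbatim, after checking that the Selmer structure $\mathcal{F}_\Lambda$ and the local conditions at admissible primes fit the axiomatic framework. First I would recall that $\Lambda$ is a regular local ring of Krull dimension two, so that the structure theory of finitely generated $\Lambda$-modules applies; in particular $\mathcal{X}$ has a well-defined $\Lambda$-torsion submodule $\mathcal{X}_{\Lambda\text{-}\mathrm{tors}}$ with a characteristic ideal, and a free part of some rank. The two collections $\{\kappa_m\}$, $\{\lambda_m\}$ give, via the explicit reciprocity laws, exactly the input Howard calls an ``Euler system of odd/even type'': the classes $\kappa_m$ for indefinite $m$ live in Selmer groups $\mathrm{Sel}_m(K,\mathbf{T}_k)$, and their local behaviour at an admissible prime $\ell$ (with $m\ell$ still in $\mathcal{N}_k$) is controlled by the ``bipartite'' partner $\lambda_m$ or $\lambda_{m\ell}$, with the key freeness statement $\mathrm{H}^1_{\mathrm{unr}}(K_\ell,\mathbf{T}_k)\cong\mathrm{H}^1_{\mathrm{ord}}(K_\ell,\mathbf{T}_k)\cong\Lambda/p^k\Lambda$ already recorded above.

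The core of the argument is then a Kolyvagin-style descent run on the tree of admissible sets. The nonvanishing of $\kappa^\infty=\varprojlim_k\kappa_1$ in $\mathscr{S}$ means that for $k$ large its reduction $\kappa_1\bmod p^k$ is nonzero modulo $p$, so one can choose, by a Chebotarev argument over the field cut out by $\mathbf{T}_k$ (using that $\overline{\varrho}$ is surjective, $p\ge 5$, and the ramification hypothesis at $\ell\mid N^-$), an admissible prime $\ell_1$ at which $\mathrm{loc}_{\ell_1}(\kappa_1)$ generates a large submodule of $\mathrm{H}^1_{\mathrm{ord}}(K_{\ell_1},\mathbf{T}_k)$; by the reciprocity law this forces $\lambda_{\ell_1}$ to be nonzero modulo a controlled power of $p$. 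Iterating, one produces a definite $m$ with $\lambda_m$ a unit mod $p^k$ up to bounded error, and the ``rigidity'' lemma of Howard (the analogue of the fact that a nontrivial Euler system forces the Selmer group to be cyclic) yields $\mathscr{S}\cong\Lambda$ and $\mathrm{rank}_\Lambda\mathcal{X}=1$. For the divisibility, the point is that each successive choice of admissible prime trades a factor of the characteristic ideal of $\mathcal{X}_{\Lambda\text{-}\mathrm{tors}}$ against the index $\mathrm{ord}_\mathcal{P}(\kappa^\infty,\mathscr{S})$, and the factor $2$ arises exactly as in Howard's bound because the relevant local Tate pairing has the two isotropic pieces $\mathrm{H}^1_{\mathrm{unr}}$ and $\mathrm{H}^1_{\mathrm{ord}}$ of equal size, so each localization step can only halve the control one gains. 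Concretely: localizing at a height one prime $\mathcal{P}$, base-changing to $\Lambda_\mathcal{P}$, and running the descent over the discrete valuation ring $\Lambda_\mathcal{P}$ reduces the statement to the known one-variable bipartite Euler system inequality.

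The step I expect to be the main obstacle is verifying the Chebotarev input with the \emph{multiplicative-at-$p$} hypothesis in force, i.e.\ checking that enough admissible primes exist and that the global-to-local maps behave as required when the residual representation need not be finite at $p$. This is precisely where the paper has flagged that extra care is needed --- the proofs of Lemma \ref{completionCHAR} (via Skinner--Zhang's level-raising lemma) and of Proposition \ref{localPROP} (via Nekov\'a\v{r}'s analysis of Kummer maps) supply the missing local ingredients. Once those are in hand, the abstract machinery of \cite{BipartiteES} applies without change, and the theorem follows; I would therefore present the proof as a reduction to \cite[Theorem 3.1]{BipartiteES} (or the appropriate numbered statement there), with the verification of Howard's axioms for $(\mathbf{T},\mathcal{F}_\Lambda)$ occupying the bulk of the write-up.
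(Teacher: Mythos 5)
Your overall strategy---reduce everything to Howard's bipartite Euler system machinery---is the same as the paper's, which simply cites \cite[Theorem 3.2.3]{BipartiteES}. But there is a genuine gap at the one point where the paper has to do real work. Howard's theorem (together with \cite[Lemma 3.2.2]{BipartiteES}) gives that $\scr{S}$ is torsion-free of $\Lambda$-rank one, that $\mrm{rank}_\Lambda\mathcal{X}=1$, and a bound of the form $\mrm{ord}_\mathcal{P}\big(\mrm{char}(\mathcal{X}_{\Lambda\mbox{-}\mrm{tors}})\big)\le 2\cdot\mrm{ord}_\mathcal{P}\big(\mrm{char}(\scr{S}/\Lambda\kappa^\infty)\big)$. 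The theorem as stated here instead bounds by $\mrm{ord}_\mathcal{P}(\kappa^\infty,\scr{S})=\sup\{n\mid \kappa^\infty\in\mathcal{P}^n\scr{S}\}$, and the two quantities are only identified once one knows $\scr{S}\cong\Lambda$. You assert that $\scr{S}\cong\Lambda$ ``comes out of Howard's rigidity lemma,'' but it does not: a torsion-free rank-one $\Lambda$-module need not be free (the maximal ideal $(p,\gamma-1)$ is the standard counterexample), so freeness is genuinely extra content. The paper's proof supplies it by choosing a height-one prime $\mathcal{P}$ generated by an Eisenstein polynomial at which the image of $\kappa^\infty$ in $\scr{S}/\mathcal{P}\scr{S}$ is non-zero (possible for all but finitely many $\mathcal{P}$ since $\kappa^\infty\neq0$ and $\scr{S}$ is torsion-free), invoking Propositions 3.3.1 and 3.3.3 of \cite{BipartiteES} to see that $\scr{S}/\mathcal{P}\scr{S}$ is free of rank one over the DVR $\Lambda/\mathcal{P}$, and then applying Nakayama to get a surjection $\Lambda\onto\scr{S}$, which is an isomorphism by torsion-freeness. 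Without some such argument your write-up does not prove the inequality in the stated form.

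Two smaller points. First, the lengthy re-derivation of Howard's internal Kolyvagin/Chebotarev descent is unnecessary for this theorem: the statement takes the bipartite Euler system as a hypothesis, so one may quote \cite{BipartiteES} as a black box. Second, you locate the ``main obstacle'' in the multiplicative-at-$p$ issues addressed by Lemma \ref{completionCHAR} and Proposition \ref{localPROP}; those belong to the \emph{construction} of the Euler system (Corollary \ref{corBES}), not to the proof of Theorem \ref{IMC}, whose hypotheses already grant the system. The obstacle specific to this theorem is precisely the freeness of $\scr{S}$ discussed above.
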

\begin{proof}
This essentially follows from \cite[Theorem 3.2.3]{BipartiteES}.
We only have to prove that $\mathscr{S}$ is free of rank one. Indeed, if $\scr{S}\cong\Lambda$, it is easy to see that
 \[
\mrm{ord}_\mathcal{P}\big(\mrm{char}(\scr{S}/\Lambda\kappa^\infty)\big)=\mrm{ord}_\mathcal{P}\big(\kappa^\infty,\scr{S}\big).
 \] 
As $\kappa^\infty$ is non-zero and $\scr{S}$ is a finitely generated torsion-free $\Lambda$-module by \cite[Lemma 3.2.2]{BipartiteES}, the image of $\kappa^\infty$ is non-trivial in $\scr{S}/\mathcal{P}\scr{S}$ for all but finitely many height one primes $\mathcal{P}$.
Since there are infinitely many Eisenstein polynomials, we can suppose without loss of generality that $\kappa^\infty$ is non-trivial in $\scr{S}/\mathcal{P}\scr{S}$ for some $\mathcal{P}$ generated by an Eisenstein polynomial.
Then, $\Lambda/\mathcal{P}$ is a DVR and $\scr{S}/\mathcal{P}\scr{S}$ is a free rank one $\Lambda/\mathcal{P}$-module by Propositions 3.3.1 and 3.3.3 of \cite{BipartiteES}.
Thus, Nakayama's lemma implies the existence of a surjection $\phi\colon\Lambda\onto\scr{S}$.
We deduce that $\phi$ is an isomorphism because $\scr{S}$ is $\Lambda$-torsion-free and non-zero.
\end{proof}

\subsection{Main theorem}
The remainder of this article is devoted to the construction of a bipartite Euler system for $\mathbf{T}$  (Corollary \ref{corBES}) whose distinguished class $\kappa^\infty$ computes the mock plectic invariant $\cal{Q}_K$ (Lemma \ref{QandKappa}). For the convenience of the reader, we present right away our main theorem and postpone momentarily the most technical parts of the argument.
\begin{theorem}\label{mainTHM}
Suppose $L(E/K,1)=0$ and that $\mathcal{Q}_K\neq 0$, then
    \[
    r_{p}(E/K)= 2.
    \]
   Furthermore, the maximum of $\big\{r_{p}(E^\pm/\Q)+\delta_p^\pm\big\}$ is equal to $2$.
\end{theorem}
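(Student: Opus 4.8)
The plan is to combine the Iwasawa-theoretic machinery set up in this section with the mock plectic invariant $\mathcal{Q}_K$ by exhibiting $\mathcal{Q}_K$ as (the image of) the distinguished class $\kappa^\infty$ of a bipartite Euler system for $\mathbf{T}$. First I would invoke the results promised in the text: the construction of a bipartite Euler system (Corollary \ref{corBES}) and the identification (Lemma \ref{QandKappa}) relating its distinguished class $\kappa^\infty\in\mathscr{S}$ to the global factor of $\mathcal{Q}_K$ under the Kummer map \eqref{globalkummer}. Since $\widehat{K}_{p,1}^\times$ is free of rank one over $\Z_p$ and the local Kummer map \eqref{localkummer} is injective, the hypothesis $\mathcal{Q}_K\neq 0$ forces the global component of $\mathcal{Q}_K$, hence $\kappa^\infty$, to be non-trivial. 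By Proposition \ref{SelmerProp} (applied with $c=1$, $\chi=\mathbbm{1}$) the vanishing $L(E/K,1)=0$ guarantees $\mathcal{Q}_K$ lands in the Selmer group, so indeed $\kappa^\infty\in\mathscr{S}$ is a non-zero element; this is exactly the input needed for Theorem \ref{IMC}.

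Next I would feed this into Theorem \ref{IMC} to conclude $\mathscr{S}\cong\Lambda$ and $\mathrm{rank}_\Lambda\,\mathcal{X}=1$, together with the divisibility $\mathrm{ord}_\mathcal{P}(\mathrm{char}(\mathcal{X}_{\Lambda\text{-}\mathrm{tors}}))\le 2\cdot\mathrm{ord}_\mathcal{P}(\kappa^\infty,\mathscr{S})$ for every height one prime $\mathcal{P}$. The key now is to descend from $\Lambda$-modules to $\Z_p$-coranks over $K$. Since $\mathrm{rank}_\Lambda\,\mathcal{X}=1$, the augmentation quotient $\mathcal{X}_\mathcal{G}$ has $\Z_p$-rank at least one, and one checks that the "extra" rank coming from the torsion submodule $\mathcal{X}_{\Lambda\text{-}\mathrm{tors}}$ is governed by the $\mathcal{P}$-order of its characteristic ideal at the augmentation prime $\mathcal{P}=\ker(\Lambda\to\Z_p)$. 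I would apply the divisibility at this $\mathcal{P}$: because $\kappa^\infty$ generates $\mathscr{S}\cong\Lambda$ up to the characteristic ideal of $\mathscr{S}/\Lambda\kappa^\infty$, and because a generator of a free rank one $\Lambda$-module is in particular not divisible by the augmentation ideal, one gets $\mathrm{ord}_\mathcal{P}(\kappa^\infty,\mathscr{S})=0$ for the augmentation prime — provided $\kappa^\infty$ is non-trivial modulo $\mathcal{P}$, which follows once we know $\mathcal{Q}_K\neq 0$ maps to something non-zero in $\mathscr{S}/\mathcal{P}\mathscr{S}=\mathrm{Sel}(K,T_p(E))$. Hence $\mathcal{X}_{\Lambda\text{-}\mathrm{tors}}$ contributes $0$ at $\mathcal{P}$, forcing $\mathrm{rank}_{\Z_p}\,\mathcal{X}_\mathcal{G}\le 2$ (one from the free part, nothing extra from torsion up to a controlled finite error). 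Combined with Lemma \ref{Selmerbound} this gives $r_p(E/K)\le 2$; the reverse inequality $r_p(E/K)\ge 2$ comes from the functional equation — the root number $\varepsilon(E/K)=+1$ forces $r_p(E/K)$ to be even, and $\mathcal{Q}_K\neq 0$ together with Kummer theory produces a non-torsion class in $\mathrm{H}^1_f(K,V_p(E))$, so $r_p(E/K)\ge 1$, whence $r_p(E/K)=2$.

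For the second assertion about $\max\{r_p(E^\pm/\Q)+\delta_p^\pm\}$, I would use the decomposition of the Selmer group over $K$ into the $\pm$-eigenspaces under $\mathrm{Gal}(K/\Q)$, which correspond to the Selmer groups of $E^+=E$ and its quadratic twist $E^-$ over $\Q$: namely $r_p(E/K)=r_p(E^+/\Q)+r_p(E^-/\Q)$. The local factor $\mathrm{H}^1_f(K_p,V_p(E))^{-a_p}$ carried by $\mathcal{Q}_K$ is one-dimensional precisely because $E$ has multiplicative reduction at $p$, and the sign $-a_p(E^\pm)$ together with Corollary \ref{eigenspacePOS} pins down into which $\pm$-eigenspace the global class of $\mathcal{Q}_K$ lands; the shift by $\delta_p^\pm$ records exactly the contribution of this one extra local direction to the eigenspace corank. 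Tracking this bookkeeping through the descent shows that the non-vanishing of $\mathcal{Q}_K$ forces the corresponding eigenspace corank $r_p(E^\pm/\Q)+\delta_p^\pm$ to equal $2$, while the bound $r_p(E/K)\le 2$ prevents any eigenspace from exceeding it, giving the claimed maximum.

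\medskip
\noindent\textbf{Main obstacle.} I expect the delicate point to be the descent step: passing from $\mathrm{rank}_\Lambda\,\mathcal{X}=1$ plus the divisibility of characteristic ideals to the sharp bound $\mathrm{rank}_{\Z_p}\,\mathcal{X}_\mathcal{G}\le 2$. One must verify that $\kappa^\infty$ is non-trivial modulo the augmentation ideal — equivalently that the global part of $\mathcal{Q}_K$ does not vanish after specialization at the central point — and that the error terms from the non-exactness of taking $\mathcal{G}$-coinvariants (finite kernels/cokernels in the control theorem, the $E[p^\infty](K_\infty)$ finiteness already invoked before \eqref{bound1}) do not inflate the $\Z_p$-rank. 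The multiplicative reduction at $p$ makes the local conditions at $v\mid p$ the ordinary (rather than finite) ones via the filtration $\mathrm{Fil}_vT_p(E)\cong\Z_p(1)$, and checking compatibility of this with the ordinary local condition appearing in the Euler system — the point already flagged in the introduction as requiring Nekovář's analysis in Proposition \ref{localPROP} — is where the argument needs the most care.
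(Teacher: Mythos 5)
There is a genuine gap in your descent step. You claim that $\mathrm{ord}_{\mathcal{P}}(\kappa^\infty,\mathscr{S})=0$ at the augmentation prime because ``$\mathcal{Q}_K\neq 0$ maps to something non-zero in $\mathscr{S}/\mathcal{P}\mathscr{S}$''. This contradicts Lemma \ref{QandKappa}: the distinguished class satisfies $\kappa^\infty\in \mathrm{H}^1(K,V_p(E)\otimes_{\Z_p}I)$, i.e.\ $\kappa^\infty$ \emph{vanishes} modulo the augmentation ideal $I$ (this is forced by the trace relation $\mathrm{Trace}_{H_{cp}/H_c}({}^{\psi}P_1)=0$ of Lemma \ref{compatiblePOINTS}, as must happen in an $\varepsilon(E/K)=+1$ situation), and $\mathcal{Q}_K$ is its \emph{first derivative}: $\mathrm{ord}_p(q)\cdot\mathcal{Q}_K\equiv\kappa^\infty\pmod{I^2}$. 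So the hypothesis $\mathcal{Q}_K\neq0$ gives $\mathrm{ord}_I(\kappa^\infty,\mathscr{S})=1$, not $0$; your premise would in fact put you in a rank-one situation incompatible with the evenness you invoke later. Consequently Theorem \ref{IMC} only yields $\mathrm{rank}_{\Z_p}\mathcal{X}_{\mathcal{G}}=1+\mathrm{ord}_I(\mathrm{char}(\mathcal{X}_{\Lambda\text{-}\mathrm{tors}}))\le 1+2\cdot 1=3$, hence (via Lemma \ref{Selmerbound}) only $r_p(E/K)\le 3$. This is how the paper argues, and the equality $r_p(E/K)=2$ is then obtained exactly by the ingredients you already list for the lower bound: positivity of $r_p(E/K)$ from Proposition \ref{SelmerProp} together with $\mathcal{Q}_K\neq0$, and the parity conjecture. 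So your argument can be repaired by deleting the claimed sharp bound $\le 2$ and letting parity do the work, but as written the key step is false.

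For the second assertion your bookkeeping is in the right spirit (eigenspace decomposition $r_p(E/K)=r_p(E^+/\Q)+r_p(E^-/\Q)$, Corollary \ref{eigenspacePOS} to locate the eigenspace of $\mathcal{Q}_K$, and the observation that $p$ inert forces $a_p(E^-)=-a_p(E)$, so $\delta_p^++\delta_p^-=1$), but it is incomplete: you also need the parity conjecture over $\Q$, namely $\varepsilon(E/\Q)=(-1)^{r_p(E^+/\Q)}=(-1)^{r_p(E^-/\Q)}$, to know that the two eigenspace coranks have the same parity; only then does Corollary \ref{eigenspacePOS} (which places the nonzero class in the $-a_p(E)\cdot\varepsilon(E/\Q)$ eigenspace, forcing that corank to be $\ge 1$, hence $=2$ in the even case) pin down the configuration and give $\max\{r_p(E^\pm/\Q)+\delta_p^\pm\}=2$ in all cases.
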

\begin{proof}
 Corollary \ref{corBES} and Lemma \ref{QandKappa} together with the assumption $\mathcal{Q}_K\neq0$ imply that there exists a bipartite Euler system for $\mathbf{T}$ whose distinguished class $\kappa^\infty\in\scr{S}$ is non-trivial and such that $\mrm{ord}_I\big(\kappa^\infty,\scr{S}\big)\le1$ for $I\le\Lambda$ the augmentation ideal.
Thus, Theorem \ref{IMC} gives the inequality
\begin{align*}
\mrm{rank}_{\Z_p}\hspace{0.5mm} \mathcal{X}_\mathcal{G}&=1+\mrm{ord}_I\big(\mrm{char}(\mathcal{X}_{\Lambda-\mrm{tors}})\big)\\
&\le1+2\cdot \mrm{ord}_I\big(\kappa^\infty,\scr{S}\big)\\
&\le3,
\end{align*}
which, together with Lemma \ref{Selmerbound}, implies that the $p$-Selmer rank of $E_{/K}$ satisfies $r_p(E/K)\le3$. At the same time, the vanishing of the $L$-value $L(E/K,1)$, Proposition \ref{SelmerProp}, and the non-vanishing of the mock plectic invariant $\mathcal{Q}_K$, ensure that the $p$-Selmer rank $r_p(E/K)$ is positive. Hence,  $r_p(E/K)=2$ thanks to the parity conjecture (for instance \cite[Corollary 12.2.10]{SelmerComplexes}). 

\smallskip
\noindent For the last claim,
note that we have already proved the equality
\[
r_p(E^+/\Q)+r_p(E^-/\Q)=2,
\]
 and that the parity conjecture also tells us that
\[
\varepsilon(E/\Q)=(-1)^{r_p(E^+/\Q)}=(-1)^{r_p(E^-/\Q)}.
\]
Therefore, Corollary \ref{eigenspacePOS} provides the final ingredient to complete the proof, that is, $r_p(E^\circ/\Q)\ge1$ for $\circ$ equal to the sign of $-a_p(E)\cdot\varepsilon(E/\Q)$.
\end{proof}

\begin{remark}
    The converse to Theorem \ref{mainTHM} might be within reach, but its proof will require some new ideas. The main obstacle seems to be proving the full Heegner point main conjecture in our setting. Indeed, Burungale--Castella--Kim's approach relies on having anticyclotomic $p$-adic $L$-functions whose values at the trivial character are non-zero modulo $p$ (see the proof of \cite[Proposition 3.7]{BCK}). However, those $p$-adic $L$-functions acquire an exceptional zero at the trivial character when $p$ is a prime of multiplicative reduction (\cite{CentralCH}, Theorem A).
\end{remark}

\subsection{Level raising}\label{ExpLR}
  For  $m\in\mathcal{N}^\mrm{ind}_k$ let $B_m/\Q$ be the \emph{indefinite} quaternion algebra of discriminant $mN^-$ and choose an isomorphism $B_m\otimes_\Q\R\cong\mrm{M}_2(\R)$. The Shimura curve associated to an Eichler order $R_m\subset B_m$ of level $pN^+$ has a canonical model $Y_m$ defined over $\Q$ and a smooth compactification denoted by $X_m$.
Moreover, the complex manifold $Y_m(\C)$ is biholomorphic to the quotient $\Gamma_m\backslash \mathcal{H}$
where $\Gamma_m:=(R_m)^\times_1$ is the subgroup of elements of $R_m$ of reduced norm $1$.
We write $J_m$ for the Jacobian variety of $X_m$ and $\bb{T}_m$ for the subalgebra of $\End(J_m)$ generated by Hecke correspondences.

\smallskip
\noindent Whenever $m\in\mathcal{N}^\mrm{def}_k$ we consider the Shimura set
\[
S_{m}:=B_{m}^\times\backslash\widehat{B}_{m}^\times/\widehat{R}_{m}^\times
\]
associated to the \emph{definite} quaternion algebra $B_{m}/\Q$ of discriminant $m N^-$ and an  Eichler order $R_{m}\subset B_{m}$ of level $pN^+$.
For every ring $\mathcal{R}$ the free $\mathcal{R}$-module $\mathcal{R}[S_{m}]$ on the set $S_{m}$ can be naturally identified with the space of weight $2$ modular forms of level $pN^+$ and coefficients in $\mathcal{R}$ for the definite quaternion algebra $B_{m}/\Q$.
We denote by $\bb{T}_{m}$ the algebra generated by Hecke operators acting on $\Z[S_{m}]$.

\begin{definition}
Let $m$ be an element of $\mathcal{N}_k$.
		A ring homomorphism $\phi_m\colon \bb{T}_m\to \Z/p^k\Z$ satisfying
    \begin{equation}\label{charCOND}\begin{split}
    \phi_m(T_q)&\equiv a_q(E)\pmod{p^k}\qquad\forall\ q\nmid Nm,\\
    \phi_m(U_q)&\equiv a_q(E)\pmod{p^k}\qquad\forall\ q\mid N,\\
    \phi_m(U_\ell)&\equiv \epsilon_\ell\hspace{7.2mm}\pmod{p^k}\qquad\forall\ \ell\mid m,
   \end{split} \end{equation}
   for $\epsilon_\ell\in\{\pm1\}$ the sign attached to the $k$-admissible prime $\ell\mid m$ as described in \eqref{LRsign},
	is called a \emph{mod $p^k$ level raising} of the system of Hecke eigenvalues attached to the elliptic curve $E_{/\Q}$.
\end{definition}
\noindent A mod $p^k$ level raising $\phi_m$ is clearly unique, if it exists.
We will show that, when $m\in \mathcal{N}^\mrm{ind}_k$ is indefinite, it also induces an isomorphism of $G_\Q$-representations 
\begin{equation}\label{GaloisISO}
T_p(J_m)/\mathcal{I}_{\phi_m}\cong E[p^k]
\end{equation}
 where $\mathcal{I}_{\phi_m}=\ker\phi_m$ and $T_p(J_m)$ denotes the $p$-adic Tate module of $J_m$.

\subsubsection{Existence of mod $p^k$ level raising.}
We sketch a proof of the existence of mod $p^k$ level raisings following the inductive construction in \cite[Sections 5 $\&$ 9]{BD}.
To this end, suppose that $m=\ell_1\ell_2 m_\circ\in \mathcal{N}^\mrm{ind}_k$ is indefinite and that we have constructed $\phi_{m_\circ}\colon \bb{T}_{m_\circ}\to \Z/p^k\Z$ such that  
\[
T_p(J_{m_\circ})/\mathcal{I}_{\phi_{m_\circ}}\cong E[p^k].
\]
When $m_\circ=1$, $\phi_{m_\circ}$ is simply the mod $p^k$ reduction of the system of Hecke eigenvalues attached to the elliptic curve $E_{/\Q}$. 
One begins by constructing a mod $p^k$ level raising surjective eigenfunction $g_{\ell_1m_\circ}\colon S_{\ell_1m_\circ}\to \Z/p^k\Z$ as in the proof of \cite[Theorem 9.3]{BD} (see the proof of Theorem \ref{2ERL} below for more details).
The idea is to consider the local Kummer map for $J_{m_\circ}$ at the prime $\ell_1$ and to exploit the canonical identification between the set of supersingular points $X_{m_\circ}(\bb{F}_{\ell_1^2})^\mrm{ss}$ and the Shimura set $S_{m_\circ\ell_1}$ established by Ribet in \cite[Theorem 3.4]{BimodulesRibet}.
The eigenfunction $g_{\ell_1m_\circ}$ determines a level raising homomorphism $\phi_{\ell_1m_\circ}\colon \bb{T}_{\ell_1m_\circ}\to\Z/p^k\Z$.
To obtain the level raising homomorphism $\phi_{m}\colon\bb{T}_{m}\to\Z/p^k\Z$,
it then suffices to invoke \cite[Theorem 5.15]{BD} together with  Lemma \ref{completionCHAR} below.
Moreover, \cite[Lemma 5.16 $\&$ Theorem 5.17]{BD} apply without change to our setting and provide the isomorphism \eqref{GaloisISO}.

\smallskip
\noindent Let $X_{m_\circ}(\ell_1)$ denote the proper Shimura curve of level $pN^+\ell_1$ attached to the indefinite quaternion algebra $B_{m_\circ}$.
The special fibre at $\ell_1$ of the N\'eron model of its Jacobian is an extension of an abelian variety by a torus.
We denote by $\scr{X}_{\ell_1}(X_{m_\circ}(\ell_1))$ the character group of this torus.

\begin{lemma}\label{completionCHAR}
    The completion of the character group $\scr{X}_{\ell_1}(X_{m_\circ}(\ell_1))$ at the unique maximal ideal $\mathfrak{m}\subseteq \bb{T}_{\ell_1m_\circ}$ associated to $\phi_{\ell_1m_\circ}$ is free of rank one over the completed Hecke algebra $(\bb{T}_{\ell_1m_\circ})_{\mathfrak{m}}$.  
\end{lemma}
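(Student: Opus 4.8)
The plan is to follow the inductive scheme of \cite[Section 5]{BD}: identify the completed character group with a module of modular forms on a definite quaternion algebra, and deduce freeness of rank one from a multiplicity-one statement, adapting only the local argument at $p$ to the case of multiplicative reduction.

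First I would record the structural input. Since $\ell_1$ is prime to $pN^+$ and to the discriminant $m_\circ N^-$ of $B_{m_\circ}$, the curve $X_{m_\circ}(\ell_1)$ has semistable (Deligne--Rapoport type) reduction at $\ell_1$: its special fibre is the union of two copies of $X_{m_\circ}$ over $\mathbb{F}_{\ell_1}$ meeting transversally along the supersingular locus, so $\scr{X}_{\ell_1}(X_{m_\circ}(\ell_1))$ is the first homology of the dual graph, i.e. the kernel of the sum map on the free $\mathbb{Z}$-module on the supersingular points of $X_{m_\circ}$ in characteristic $\ell_1$. By Ribet's reciprocity \cite[Theorem 3.4]{BimodulesRibet} this set is in Hecke-equivariant bijection with the Shimura set $S_{\ell_1 m_\circ}$ of the \emph{definite} algebra $B_{\ell_1 m_\circ}$ of discriminant $\ell_1 m_\circ N^-$ and level $pN^+$ (note $\ell_1 m_\circ\in\mathcal{N}_k^{\mrm{def}}$, since $m=\ell_1\ell_2 m_\circ$ is indefinite). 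Moreover the toric part at $\ell_1$ is entirely $\ell_1$-new, so by Jacquet--Langlands the Hecke algebra acting on $\scr{X}_{\ell_1}(X_{m_\circ}(\ell_1))$ is canonically $\bb{T}_{\ell_1 m_\circ}$, and $\mathfrak{m}$ is the ideal cut out by $\phi_{\ell_1 m_\circ}$. As $\overline{\varrho}$ is surjective, $\mathfrak{m}$ is non-Eisenstein, so the target of the sum map (which carries the Eisenstein Hecke action) vanishes after localising at $\mathfrak{m}$, yielding a $\bb{T}_{\ell_1 m_\circ}$-equivariant identification $\scr{X}_{\ell_1}(X_{m_\circ}(\ell_1))_{\mathfrak{m}}\cong \mathbb{Z}[S_{\ell_1 m_\circ}]_{\mathfrak{m}}$ that is compatible with $\mathfrak{m}$-adic completion. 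This module is \emph{faithful} over $(\bb{T}_{\ell_1 m_\circ})_{\mathfrak{m}}$, since $\bb{T}_{\ell_1 m_\circ}$ is by definition a subalgebra of $\End_{\mathbb{Z}}(\mathbb{Z}[S_{\ell_1 m_\circ}])$ and faithfulness is preserved by localisation and completion.

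Next I would reduce the assertion to multiplicity one. Grant for the moment that $\overline{\varrho}$ occurs with multiplicity one in the $p$-torsion $J[\mathfrak{m}]$ of the Jacobian $J$ of $X_{m_\circ}(\ell_1)$ localised at $\mathfrak{m}$. Through Grothendieck's monodromy pairing --- equivalently the $\ell_1$-adic uniformisation sequence relating the $p$-adic Tate module of $J$ to $\scr{X}_{\ell_1}(X_{m_\circ}(\ell_1))$, with the Atkin--Lehner involution $w_{\ell_1}$ acting on the latter through $\epsilon_{\ell_1}\in\{\pm1\}$ and hence harmlessly --- multiplicity one forces $\scr{X}_{\ell_1}(X_{m_\circ}(\ell_1))_{\mathfrak{m}}/\mathfrak{m}$ to be one-dimensional over $k:=\bb{T}_{\ell_1 m_\circ}/\mathfrak{m}$. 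Nakayama's lemma then produces a surjection $(\bb{T}_{\ell_1 m_\circ})_{\mathfrak{m}}\twoheadrightarrow\scr{X}_{\ell_1}(X_{m_\circ}(\ell_1))_{\mathfrak{m}}$ whose kernel annihilates a faithful module, hence vanishes, so the map is an isomorphism.

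Finally I would establish the multiplicity-one statement, and this is where the main obstacle lies. Away from $p$ the argument is the standard package underlying \cite{BD} and \cite{PW11}: irreducibility of $\overline{\varrho}$ (from surjectivity), the level-raising / Ihara's-lemma input at the new prime $\ell_1$, and Ribet-type multiplicity one at the primes $\ell\mid N^-$, for which Pollack--Weston's Condition CR --- ramification of $\overline{\varrho}$ at every $\ell\mid N^-$ with $\ell^2\equiv1\pmod p$ --- is exactly the hypothesis required. The genuinely new difficulty is the prime $p$: since $E$ has multiplicative reduction at $p$, the local representation $\overline{\varrho}|_{G_{\mathbb{Q}_p}}$ need not be finite, so the flat-deformation argument used at $p$ in the classical setting is unavailable; in its place I would invoke Skinner--Zhang's level-raising lemma \cite[Lemma 5.4]{SkinnerZhang}, which supplies precisely the local input at $p$ needed when $\overline{\varrho}$ is not finite there. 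With multiplicity one in hand the proof is complete; the remaining ingredients --- Ribet's exchange, the Jacquet--Langlands matching, and the monodromy pairing --- are taken verbatim from \cite[Section 5]{BD} and involve only routine bookkeeping of the Hecke actions.
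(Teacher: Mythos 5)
Your proposal follows the same overall strategy as the paper: reduce the freeness assertion to a multiplicity-one statement --- that $J(X_{m_\circ}(\ell_1))[\mathfrak{m}]$ is two-dimensional over $\bb{F}_p$ --- and identify Skinner--Zhang's level-raising lemma as the fix for the obstruction at $p$ caused by multiplicative reduction. Your structural preparation (the Deligne--Rapoport special fibre, the dual-graph description of $\scr{X}_{\ell_1}(X_{m_\circ}(\ell_1))$, Ribet's exchange identifying the supersingular locus with $S_{\ell_1 m_\circ}$, and the faithfulness-plus-Nakayama reduction) is correct and coincides with the content of \cite[Theorem 6.2]{PW11}, which the paper cites for exactly this reduction.

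The gap lies in the multiplicity-one step itself, which you dispatch as ``the standard package underlying [BD] and [PW11]'' without identifying the actual ingredients. The paper's argument is concrete: Wiles' result \cite[Theorem 2.1(ii)]{W}, valid because $\overline{\varrho}$ is $p$-distinguished, gives two-dimensionality of $J_0(N\ell_1 m_\circ)[\mathfrak{m}]$ for the \emph{classical} modular Jacobian; Helm's controllability criterion \cite[Corollary 8.11 \& Remark 8.12]{Helm} then transfers this to $J(X_{m_\circ}(\ell_1))$, with Condition CR and the definition of admissible primes guaranteeing controllability at the primes dividing $N^-$ and $m_\circ$; and the Skinner--Zhang lemma patches a documented error in Helm's Lemma 7.1 that surfaces precisely when $\overline{\varrho}$ is not finite at $p$. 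Your proposal names neither Wiles (so the role of $p$-distinguishedness is missed) nor Helm's transfer mechanism, and the invocation of Ihara's lemma is misplaced here --- Ihara's lemma enters in the level-raising surjectivity behind Theorem~\ref{2ERL}, not in the multiplicity-one argument for the character group. Until the Wiles-plus-Helm chain is on the table it is not even clear what the Skinner--Zhang lemma is patching, so the skeleton is right but the argument as written does not close.
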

\begin{proof}
 As in the proof of \cite[Theorem 6.2]{PW11}, it suffices to show that the $\mathfrak{m}$-torsion of the Jacobian of $X_{m_\circ}(\ell_1)$ is two dimensional over $\bb{F}_p$. Since the residual representation $\overline{\varrho}$ is $p$-distinguished, the dimension over $\bb{F}_p$ of $J_0(N\ell_1m_\circ)[\mathfrak{m}]$ is two by \cite[Theorem 2.1(ii)]{W}.
Moreover, the maximal ideal $\mathfrak{m}$ is controllable (in the terminology of \cite{Helm}) at every prime divisor of $m_\circ$ and $N^-$ either because of the definition of admissible primes or because of our assumption that $\overline{\varrho}$ is ramified at every prime $\ell\mid N^-$ with $\ell^2\equiv1\pmod{p}$. Hence, \cite[Corollary 8.11 $\&$ Remark 8.12]{Helm} allow one to deduce the claim for the Jacobian of $X_{m_\circ}(\ell_1)$.
 
 \noindent We remark that Skinner--Zhang noted in the proof of \cite[Lemma 5.5]{SkinnerZhang} that \cite[Corollary 8.11 $\&$ Remark 8.12]{Helm} do not cover the case where the residual representation is not finite at $p$. The gap is traced back to \cite[Lemma 7.1]{Helm} which incorrectly quotes \cite[Theorem A]{DiamondTaylor}. Fortunately for us, the level raising lemma \cite[Lemma 5.4]{SkinnerZhang} completes Helm's argument. 
\end{proof}

\begin{remark}
 As an historical note that might be helpful in navigating the literature, we observe that the proof of \cite[Proposition 9.2]{BD} -- a key input for \cite[Theorem 9.3]{BD} -- is incorrect as written because it invokes the version of Ihara's lemma proved by Diamond--Taylor (cf.~\cite{DiamondTaylor}, Theorem 2) which requires the level of the Shimura curve to be coprime to the characteristic of the coefficients.
A solution of this issue was found by Chida--Hsieh (see \cite{ChidaHsieh}, Corollary 5.4) under the additional assumption $a_p(E)^2\not\equiv1\pmod{p}$, and is now completely resolved thanks to the general version of Ihara's lemma established by Manning--Shotton (\cite{Manning-Shotton}, Theorem 1.1). 
 \end{remark}

\subsubsection{Mod $p^k$ multiplicity one.} 
Let $m$ be an element of $\mathcal{N}^\mrm{def}_k$ and $\mathcal{R}$ a ring such that $6\in \mathcal{R}^\times$.
Let us consider the perfect paring
\begin{equation}\label{pairing}
\langle\ ,\hspace{1mm}\rangle\colon \mathcal{R}[S_{m}]\times \mathcal{R}[S_{m}]\longrightarrow \mathcal{R},\qquad \big\langle[a],[b]\big\rangle=\begin{cases}
    0&\mbox{if}\ [a]\neq[b],\\
    w_{[a]}^{-1}&\mbox{if}\ [a]=[b],
\end{cases}
\end{equation}
on $\mathcal{R}[S_{m}]$ defined in \cite[Equation 2.14]{ChidaHsieh}.
The constants $w_{[a]}\ge1$ are integers not divisible by primes larger than $3$ (see \cite{HilbertIwasawa}, Lemma 3.3).
Moreover, the Hecke operators in $\bb{T}_{m}$ are self-adjoint with respect to the pairing by \cite[Lemma 3.5]{HilbertIwasawa}.
Set 
\[
(\Z/p^k\Z)[S_{m}]^{\phi_{m}}:=\Big\{h\in (\Z/p^k\Z)[S_{m\ell}]\ \Big\lvert\ T\cdot h=\phi_{m}(T)\cdot h\quad \forall\ T\in\bb{T}_{m\ell}\Big\}.
\]
The following result establishes the uniqueness, up to multiplication by a unit in $(\Z/p^k\Z)^\times$, of a surjective mod $p^k$ level raising eigenfunction $g_{m}\colon S_{m}\to\Z/p^k\Z$. 
\begin{corollary}\label{multiplicityONE}
For any $m\in\mathcal{N}_k^\mrm{def}$ we have 
\[
(\Z/p^k\Z)[S_{m}]^{\phi_{m}}\cong\Z/p^k\Z.
\]
\end{corollary}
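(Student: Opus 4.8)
The plan is to deduce the claim from the mod-$p$ multiplicity one information contained in Lemma~\ref{completionCHAR}, combined with the self-duality provided by the perfect pairing \eqref{pairing}. Fix $m\in\mathcal{N}_k^\mrm{def}$. Any unital ring homomorphism to $\Z/p^k\Z$ is surjective, so $p^k\bb{T}_m\subseteq\mathcal{I}_{\phi_m}$ and $\bb{T}_m/\mathcal{I}_{\phi_m}\cong\Z/p^k\Z$; hence $\mathfrak{m}:=\mathcal{I}_{\phi_m}+p\bb{T}_m$ is the unique maximal ideal of $\bb{T}_m$ containing $\mathcal{I}_{\phi_m}$, it has residue field $\bb{F}_p$, and it is non-Eisenstein because $\overline{\varrho}$ is irreducible. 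The eigenspace $(\Z/p^k\Z)[S_m]^{\phi_m}=(\Z/p^k\Z)[S_m][\mathcal{I}_{\phi_m}]$ is a module over $\bb{T}_m/\mathcal{I}_{\phi_m}$, hence supported only at $\mathfrak{m}$; writing $M:=\big((\Z/p^k\Z)[S_m]\big)_{\mathfrak{m}}$, it therefore equals $M[\mathcal{I}_{\phi_m}]$.

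First I would show that $M$ is free of rank one over $\bb{T}:=(\bb{T}_m)_{\mathfrak{m}}/p^k$. Write $m=\ell m_\circ$ with $\ell$ a $k$-admissible prime and $m_\circ\in\mathcal{N}_k^\mrm{ind}$ (possibly $m_\circ=1$). Ribet's canonical bijection between the supersingular locus $X_{m_\circ}(\bb{F}_{\ell^2})^\mrm{ss}$ and $S_{m_\circ\ell}=S_m$ (\cite[Theorem~3.4]{BimodulesRibet}) identifies the character group $\scr{X}_\ell(X_{m_\circ}(\ell))$ with the augmentation submodule $\Z[S_m]^0$ as a $\bb{T}_m$-module, as in \cite[\S5]{BD}; since the augmentation map $\Z[S_m]\to\Z$ is Eisenstein, localizing at $\mathfrak{m}$ gives $\Z[S_m]_{\mathfrak{m}}\cong\scr{X}_\ell(X_{m_\circ}(\ell))_{\mathfrak{m}}$, which by Lemma~\ref{completionCHAR} is free of rank one over $(\bb{T}_m)_{\mathfrak{m}}$. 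Reducing modulo $p^k$ gives $M\cong\bb{T}$.

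Next I would invoke self-duality. Since $p\ge5$, the pairing \eqref{pairing} with $\mathcal{R}=\Z/p^k\Z$ is perfect, and as all Hecke operators are self-adjoint it localizes to an isomorphism of $\bb{T}$-modules $M\xrightarrow{\sim}\Hom_{\Z/p^k\Z}(M,\Z/p^k\Z)$. Combining this with the first step,
\[
(\Z/p^k\Z)[S_m]^{\phi_m}=M[\mathcal{I}_{\phi_m}]\cong\Hom_{\Z/p^k\Z}\big(M/\mathcal{I}_{\phi_m}M,\,\Z/p^k\Z\big),
\]
and $M/\mathcal{I}_{\phi_m}M\cong\bb{T}/\mathcal{I}_{\phi_m}\bb{T}\cong(\bb{T}_m/\mathcal{I}_{\phi_m})_{\mathfrak{m}}\cong\Z/p^k\Z$, so the right-hand side is $\cong\Z/p^k\Z$, as claimed.

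I expect the main obstacle to be the first step: one has to match the Hecke actions in Ribet's description of $\scr{X}_\ell(X_{m_\circ}(\ell))$ — absorbing the transpose/Atkin--Lehner bookkeeping and the fact that the definite Hecke algebra $\bb{T}_m$ is automatically new at $\ell$ — and then pass from Lemma~\ref{completionCHAR}, which is stated for the completed character group, to the freeness of $\Z[S_m]_{\mathfrak{m}}$ over $(\bb{T}_m)_{\mathfrak{m}}$ via the Eisenstein-localization argument. Everything after that is formal.
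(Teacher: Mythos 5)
Your proposal is correct and follows essentially the same route as the paper: identify $\Z[S_m]$ localized at the non-Eisenstein maximal ideal with the character group $\scr{X}_\ell(X_{m/\ell}(\ell))$ (the paper cites \cite[Proposition 5.3]{BD} for this step), apply Lemma~\ref{completionCHAR} to get rank-one freeness, and then use the perfect pairing \eqref{pairing} to identify the $\phi_m$-eigenspace with the $\Z/p^k\Z$-dual of the quotient $(\Z/p^k\Z)[S_m]/\mathcal{I}_{\phi_m}$. Your write-up merely makes explicit the localization-at-$\mathfrak{m}$ and Eisenstein bookkeeping that the paper leaves implicit.
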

\begin{proof}
 For any $\ell\mid m$ one can show that the completion of the character group $\scr{X}_{\ell}(X_{m/\ell}(\ell))$ at $\mathfrak{m}$ is isomorphic, as an Hecke module, to the completion of $\Z[S_{m}]$ at $\mathfrak{m}$. The proof uses \cite[Proposition 5.3]{BD} and the fact that the maximal ideal $\mathfrak{m}\subseteq\bb{T}_{m}$ associated to $\phi_{m}$ is not Eisenstein.
	Therefore, Lemma \ref{completionCHAR} implies that $(\Z/p^k\Z)[S_{m}]/\mathcal{I}_{\phi_{m}}$ is abstractly isomorphic to $\Z/p^k\Z$. The corollary follows because the perfect pairing \eqref{pairing} induces an isomorphism 
    \[
    (\Z/p^k\Z)[S_{m}]^{\phi_{m}}\xlongrightarrow{\sim} \Hom_{\Z/p^k\Z}\Big((\Z/p^k\Z)[S_{m}]/\mathcal{I}_{\phi_{m}},\Z/p^k\Z\Big).
   \]
\end{proof}

\subsection{Bipartite Euler system}\label{compFAM}
The goal of the last part of this paper is to construct a bipartite Euler system for $\mathbf{T}$ following \cite{BD}, \cite{PW11}, \cite{ChidaHsieh}, and \cite{BCK}.
We emphasize the structural parallels between the construction of the classes $\kappa_m$'s and that of the elements $\lambda_m$'s to clarify why they satisfy the explicit reciprocity laws.

\subsubsection{Compatible families of CM points.}
For any $m\in\cal{N}_k^\mrm{ind}$, the Shimura curve $Y_{m/\Q}$ coarsely represents a moduli problem of abelian surfaces with an action of a maximal order $R_\mrm{max}\supset R_m$ and level $pN^+$-structure, that is, a subgroup scheme of order $(pN^+)^2$ which is stable and cyclic for the action of  $R_\mrm{max}$ (see \cite{BD}, Section 5.1).
    Let $c$ be a positive integer prime to $p$ and $\psi\colon K\hookrightarrow B_m$ an embedding of conductor $cp$, that is, satisfying
    \[
\psi(K)\cap R_m=\psi(\mathcal{O}_{cp}).
    \]
The image of the local ring  $\mathcal{O}_{K_p}$ under $\psi$ is contained in a maximal $\Z_p$-order of $B_m\otimes_\Q\Q_p$.
Fix an isomorphism $\iota_p\colon B_m\otimes_\Q\Q_p\xrightarrow{\sim}\mrm{M}_2(\Q_p)$ mapping that maximal order to $\mrm{M}_2(\Z_p)$, then the action of $K_{p}^\times$ on the Bruhat--Tits tree $\scr{T}$ induced by $\iota_p\circ\psi$ fixes the standard vertex $v_\circ\in\scr{V}$.

\smallskip
\noindent The point $[z_{\psi}]\in\mrm{CM}_K(X_m)$ associated to the $\Gamma_m$-conjugacy class of $\psi$  belongs to $X_m(H_{cp})\subseteq X_m(\C)$. Under the moduli interpretation $[z_{\psi}]$  corresponds to a triple  $(A, \iota, C,C_p)_{/H_{cp}}$ where $C$ and $C_p$ are stable and cyclic subgroup schemes of $A$ (with respect to  the quaternionic action)  and whose respective orders are $(N^+)^2$ and $p^2$. 
     Let $\scr{T}_{(A,\iota,C)}$ denote the  tree whose vertices are triples $(A',\iota',C')$ which are $p$-isogenous to $(A,\iota,C)$, and whose oriented edges corresponds to isogenies of degree $p^2$. Note that the CM point $[z_{\psi}]$ determines an oriented edge  in $\scr{T}_{(A,\iota,C)}$. Every vertex of $\scr{T}_{(A,\iota,C)}$ is defined over the union of ring class fields $H_{cp^\infty}\subset\C$ and the Galois group $\Gal(H_{cp^\infty}/H_{cp})\cong\scr{U}_1\subseteq K_{p}^\times/\Q_p^\times$ acts naturally on the tree. The choice of a trivialization $\mrm{H}_1(A(\C),\Z_p)\cong R_\mrm{max}\otimes\Z_p$ determines a graph isomorphism 
    \[
    j_{(A,\iota,C)}\colon \scr{T}_{(A,\iota,C)}\xlongrightarrow{\sim} \scr{T}
    \]
    which maps a vertex $(A',\iota',C')$ to the vertex corresponding to the maximal order
    \[\mrm{H}_1(A'(\C),\Z_p)\ \subset\  \mrm{H}_1(A(\C),\Q_p)\hspace{0.7mm}\cong\hspace{0.7mm} B_m\otimes_\Q\Q_p.
    \]
 The theory of complex multiplication implies that $j_{(A,\iota,C)}$ intertwines the $\scr{U}_1$-action  on $\scr{T}_{(A,\iota,C)}$ via Galois automorphisms with the $\scr{U}_1$-action  on $\scr{T}$ induced by $\iota_p\circ\psi$. 
    
    \noindent We deduce that there is a bijection between sequences of consecutive edges $\{e_n^\psi\}_{n\ge1}$ originating from $v_\psi$, as defined in Section \ref{actionontree}, and collections of CM points $\{x^{\psi}_n\in X_m(H_{cp^n})\}_{n\ge1}$ 
satisfying 
 \begin{equation}\label{traceCMpts}
\mrm{Trace}_{H_{cp^{n+1}}/H_{cp^n}}(x^{\psi}_{n+1})=U_p(x^{\psi}_n).
 \end{equation}
Moreover, $\Gal(H_{cp^\infty}/H_{cp})\cong\scr{U}_1$ acts transitively on the collections of those sequences with $e_1^\psi$ corresponding to $[z_{\psi}]$ (see also \cite[Section 6]{BD} and \cite[Section 3.2]{DarmonFornea}).

\subsubsection{The classes $\kappa_m$.} For an embedding  $\psi\colon K\hookrightarrow B_m$ of conductor $p$ choose a collection of CM points $\{x^{\psi}_n\in X(H_{p^n})\}_{n\ge1}$ satisfying \eqref{traceCMpts}. 
The choice of an auxiliary prime $\ell_\circ$ such that $a_{\ell_\circ}(E)\not\equiv\ell_\circ+1\pmod{p}$ allows us to consider the map 
\[
\delta_m\colon X_m(H_{p^n})\to J_m(H_{p^n})\otimes\Z_p,\qquad x\mapsto (T_{\ell_\circ}-\ell_\circ-1)[x]\otimes(a_{\ell_\circ}(E)-\ell_\circ-1)^{-1}.
\]
Denoting by $\mathcal{K}\colon J_m(H_{p^n})\otimes_\Z\Z_p\to\mrm{H}^1\big(H_{p^n},T_p(J_m)\big)$
 the Kummer map, one sees that the classes defined by 
\[
\kappa^\diamond_m(p^n):=a_p(E)^{-n}\sum_{\sigma\in G(H_{p^n}/K_n)}\mathcal{K}(\delta_m(x^{\psi}_n)^\sigma)\pmod{\mathcal{I}_{\phi_m}}
\]
don't depend on the choice of the auxiliary prime $\ell_\circ$, are compatible under corestriction, and hence define an element in the projective limit 
\[
\kappa^\diamond_m:=\big\{\kappa^\diamond_m(p^n)\big\}_n\hspace{0.1mm}\in\hspace{0.6mm}\varprojlim_n \mrm{H}^1(K_n,E[p^k])\cong \mrm{H}^1(K,\mbf{T}_k).
\]
\begin{proposition}\label{localPROP}
Let $q\in p\Z_p$ denote the Tate period of $E_{/\Q_p}$. For every $m\in\mathcal{N}_k^\mrm{ind}$ we have 
\[
\mrm{ord}_p(q)\cdot\kappa^\diamond_m\in\mrm{Sel}_{m}(K,\mbf{T}_k)\subseteq\mrm{H}^1(K,\mbf{T}_k).
\]
\end{proposition}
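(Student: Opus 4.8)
The plan is to verify, one local condition at a time, that $\mathrm{ord}_p(q)\cdot\kappa_m^\diamond$ is unramified at every prime of $K$ not dividing $mpN^-$ and ordinary at every prime dividing $mpN^-$. Since corestriction preserves all of these conditions and $a_p(E)=\pm1$ is a $p$-adic unit, it suffices to analyse, for each rational prime $\ell$ and each place $w\mid\ell$ of $H_{p^n}$, the localization of the Kummer class $\mathcal K(\delta_m(x_n^\psi))\in\mathrm H^1(H_{p^n},T_p(J_m))$ together with its image under $T_p(J_m)/\mathcal I_{\phi_m}\cong E[p^k]$, and then to descend along $\mathrm{cores}_{H_{p^n}/K_n}$ and pass to the limit over $n$ via Shapiro's lemma.

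At primes $\ell\ne p$ the argument is the one in \cite{BD}, \cite{PW11}, \cite{ChidaHsieh} and \cite{BCK}. For $\ell\nmid Nm$ the Jacobian $J_m$ has good reduction, and since $\kappa_m^\diamond$ does not depend on the auxiliary prime $\ell_\circ$ one may choose $\ell_\circ\ne\ell$, so that the Kummer class of a point of $J_m$ over a field unramified at $\ell$ is unramified. For $\ell\mid N^+$ the ring class field $H_{p^n}$ is unramified at $\ell$, and the standard analysis of the local Kummer map at a prime of bad reduction of residue characteristic $\ne p$ -- together with the compatibility $T_p(J_m)/\mathcal I_{\phi_m}\cong E[p^k]$ -- again produces an unramified class. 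For $\ell\mid mN^-$ the quaternion algebra $B_m$ is ramified, so the Cerednik--Drinfeld uniformization exhibits $J_m$ over $\mathbb{Q}_\ell$ with purely toric reduction; tracing $\mathcal K(\delta_m(x_n^\psi))$ through this uniformization places its image in the image of $\mathrm H^1\big(K_\ell,\mathrm{Fil}_\ell(\mathbf T_k)\big)$, i.e. in the ordinary subspace -- this is precisely the mechanism underlying Bertolini--Darmon's definite/indefinite dichotomy, and it is consistent with $E_{/K}$ having split multiplicative reduction at every $v\mid N^-$, so that $\mathrm{Fil}_\ell E[p^k]\cong\mu_{p^k}$. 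None of these steps requires the factor $\mathrm{ord}_p(q)$.

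The essential point is the local condition at the unique prime $v\mid p$ of $K$, and it is here that multiplicative reduction at $p$ and Nekov\'a\v{r}'s analysis enter. The Shimura curve $X_m$ carries Iwahori level at $p$ (with $B_m$ split at $p$), hence has semistable reduction at $p$, and the $\phi_m$-isotypic quotient of $J_m$ -- where $U_p$ acts as $a_p(E)$ -- is $p$-new and has purely toric reduction, matching the Tate parametrization $E_{/\mathbb{Q}_p}=\overline{\mathbb{Q}}_p^\times/q^{\mathbb Z}$ with $\mathrm{ord}_p(q)\ge1$. The CM point $x_n^\psi$ comes from an optimal embedding of the order $\mathcal O_{p^n}$, whose $p$-component is inert, so $x_n^\psi$ reduces to a supersingular, hence singular, point of the special fibre, and the relevant component of the N\'eron model of $J_m$ over the ramified local field $(H_{p^n})_w$ is governed by the edge $e_n^\psi$ of the Bruhat--Tits tree $\scr T$ via the description of Section~\ref{actionontree}. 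Consequently the localization at $v$ of $\mathcal K(\delta_m(x_n^\psi))$, which a priori lies only in the Bloch--Kato subspace $\mathrm H^1_f$, need not land in the image of $\mathrm H^1\big((H_{p^n})_w,\mathrm{Fil}_v(\mathbf T_k)\big)$ on the nose: it differs from such a class by a contribution supported on the $p$-part of the relevant component group. Nekov\'a\v{r}'s analysis of local Kummer maps for abelian varieties with semistable reduction (\cite{NekCanadian}, Proposition 2.7.12), applied after projecting to the $p$-new quotient of $J_m$ and descending along $\mathrm{cores}_{H_{p^n}/K_n}$, shows that this contribution is annihilated by the order of the component group of the $p$-new quotient at $p$, which divides $\mathrm{ord}_p(q)$. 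Hence $\mathrm{ord}_p(q)\cdot\kappa_m^\diamond$ satisfies the ordinary local condition at $v$ in every layer, completing the verification.

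I expect the main obstacle to be precisely this last step: one must control uniformly in $n$ the interplay between the semistable model of $X_m$ at $p$ with Iwahori level, the reduction of the CM points $x_n^\psi$ attached to an order inert at $p$, and the Tate parametrization of $E$ -- in particular identifying the discrepancy with the order of the component group of the $p$-new quotient of $J_m$ and bounding it by $\mathrm{ord}_p(q)$ independently of the layer -- and then feed the resulting local classes into Nekov\'a\v{r}'s Proposition 2.7.12 in the $\mathbf T_k$-coefficient, corestriction-compatible setting.
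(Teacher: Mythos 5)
Your overall route coincides with the paper's: a prime-by-prime verification, with good reduction at $v\nmid mN$, the split-in-$K$ condition at $v\mid N^+$ (the paper quotes \cite[Lemma 1.4(1)]{ChidaHsieh}), \v{C}erednik--Drinfeld/Tate uniformization giving the ordinary condition at $v\mid mN^-$, and Nekov\'a\v{r}'s local analysis at $p$. The away-from-$p$ part of your argument is fine and matches the paper.

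The genuine gap is at $p$, and it is exactly the step you yourself defer as ``the main obstacle'': you assert that the discrepancy between the Kummer image and the ordinary subspace is ``annihilated by the order of the component group of the $p$-new quotient at $p$, which divides $\mrm{ord}_p(q)$'', but this is neither proved nor quite the right statement -- the component group of the full $p$-new quotient of $J_m$ has no reason to divide $\mrm{ord}_p(q)$, and in any case one still has to control the comparison over the ramified layers $K_{n,p}$ uniformly in $n$. The paper closes this by two concrete inputs into \cite[Proposition 2.7.12(2)]{NekCanadian}: first, \cite[Proposition 2.7.8(3)]{NekCanadian} identifies the ordinary condition on $E[p^k]$ at $p$ with the Bloch--Kato condition propagated from $V_p(E)$ (using that $E_{/K_p}$ has \emph{split} multiplicative reduction, $p$ being inert), so it suffices to land in the propagated condition; second, the relevant error constants are computed: $C_{0,p}=0$ because $J_{m/\Q_p}$ acquires split semi-abelian reduction over an \emph{unramified polyquadratic} extension $F'/\Q_p$, and $C_{6,p}=\mrm{ord}_p\big(\mrm{ord}_p(q)\big)$, so the error is killed by $p^{C_{6,p}}$, which is the $p$-part of $\mrm{ord}_p(q)$; finally the argument applies to every layer at once because $p>2$ while $[K_{n,p}F':K_{n,p}]$ is always a power of $2$. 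Without identifying these constants (and in particular the vanishing of $C_{0,p}$, which is what makes the bound independent of $n$), the factor $\mrm{ord}_p(q)$ in the statement is not justified, so your proposal as written does not yet prove the proposition.
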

\begin{proof}
The localization of the class $\kappa^\diamond_m$ at $v\nmid mN$ is unramified because the Jacobian $J_m$ has good reduction at such a prime. It is unramified at any $v\mid N^+$ thanks to \cite[Lemma 1.4(1)]{ChidaHsieh} since any such prime is required to split in $K$.  Moreover, the fact that the localization at some $v\mid mN^-$ belongs to the ordinary subspace is a direct consequence of Tate's uniformization for $J_m$ as can be seen for example in \cite[Section 1.7.3]{NekCanadian}.

   \smallskip
   \noindent We are left to show that for all $n$ large enough the class  $\mrm{ord}_p(q)\cdot\kappa^\diamond_m(p^n)\in \mrm{H}^1(K_n,E[p^k])$ is ordinary at $p$. Since the elliptic curve $E_{/K_p}$ has split multiplicative reduction, \cite[Proposition 2.7.8(3)]{NekCanadian} implies that the ordinary local condition for $E[p^k]$ at $p$ coincides with the Bloch--Kato condition obtained through propagation from  $V_p(E)$ as in \cite[Section 2.7.1]{NekCanadian}. We claim that \cite[Proposition 2.7.12(2)]{NekCanadian} allows us to conclude the proof. To see this, note that the Jacobian $J_{m/\Q_p}$ achieves split semi-abelian reduction over an \emph{unramified polyquadratic} extension $F'/\Q_p$ (see also \cite[Proposition 2.7.3]{NekCanadian}). Hence, the constant $C_{0,p}$ -- appearing in \cite[Definition 2.7.9]{NekCanadian} -- is zero (see also \cite[A.1.2 $\&$ A.1.7]{NekCanadian}). Moreover, the constant $C_{6,p}$ -- defined in \cite[Proposition-Definition 2.7.10]{NekCanadian} -- equals
   \[
C_{6,p}=\mrm{ord}_p\big(\mrm{ord}_p(q)\big).
   \]
Then, \cite[Proposition 2.7.12(2)] {NekCanadian}  gives the claim since $p>2$ and the degree $[K_{n,p}F':K_{n,p}]$  is always a power of $2$ in our setting.
\end{proof}

\begin{definition}
   For any $m\in\cal{N}_k^\mrm{ind}$ we set 
    \[
\kappa_m:=\mrm{ord}_p(q)\cdot\kappa^\diamond_m\hspace{1mm}\in\hspace{1mm}\mrm{Sel}_{m}(K,\mbf{T}_k).
    \]
\end{definition}

\subsubsection{Connection to mock plectic invariants.}
Let $I\subseteq \Lambda$ be the augmentation ideal.
The map $g\mapsto [g]-1$ induces an isomorphism $\mathcal{G}\xrightarrow{\sim} I/I^2$ of topological groups, and the composition
\[\xymatrix{
K_{p,1}^{\times}\ar[r]^-{\sim}& K_p^{\times}/\Q_p^{\times}\ar[r]^-{\mrm{rec}_K}& \mathcal{G}\ar[r]^-{\sim}& I/I^2
}\]
allows us to view the mock plectic invariant $\mathcal{Q}_K$ as an element of $\mrm{H}^1(K, V_p(E)\otimes_{\Z_p} I/I^2)$. It turns out that we can interpret $\mathcal{Q}_K$ as the first derivative of the distinguished class 
\[
\kappa^\infty=\varprojlim_{k} \kappa_1\hspace{1mm}\in\hspace{1mm} \scr{S}\subset \mrm{H}^1(K, T_p(E)\otimes_{\Z_p} \Lambda)
\]
of our bipartite Euler system:
\begin{lemma}\label{QandKappa}
The class $\kappa^\infty$ belongs to $\mrm{H}^1(K, V_p(E)\otimes_{\Z_p} I)$.
Moreover,  the equality
\begin{equation}
\mrm{ord}_p(q)\cdot\mathcal{Q}_K\equiv\kappa^\infty\pmod{I^2}
\end{equation}
holds in $\mrm{H}^1(K, V_p(E)\otimes_{\Z_p} I/I^2)$.
\end{lemma}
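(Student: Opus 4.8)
The plan is to recognize $\kappa^\infty$ and $\cal{Q}_K$ as two ``first derivatives'' of the same trace-compatible family of Heegner points $\{{}^\psi P_n\}_{n\ge 1}$ from \eqref{thepoints}, and to match the recipes extracting them. The first task is to relate these points to the CM points underlying $\kappa_1$. Fixing $\psi$ of conductor $p$ with $v_\psi=v_\circ$ and choosing the sequence of consecutive edges $\{e_n^\psi\}$ with $e_1^\psi=e_\circ$, the normalization $c_\varphi(z\otimes e_\circ)=\varphi_\C(z)$ from Proposition \ref{constructionMeasure} together with the dictionary of Section \ref{compFAM} gives ${}^\psi P_1=\bar\varphi(x_1^\psi)$, where $\bar\varphi:=\varphi\circ\delta_1$ and $x_1^\psi=[z_\psi]$. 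Since $\{{}^\psi P_n\}$ and $\{a_p(E)^{-n}\bar\varphi(x_n^\psi)\}$ are both compatible under the trace maps $\mrm{Trace}_{H_{p^{n+1}}/H_{p^n}}$ --- the former by Lemma \ref{compatiblePOINTS}, the latter by \eqref{traceCMpts} and $\bar\varphi\circ U_p=a_p(E)\bar\varphi$ --- they agree up to the unit $a_p(E)\in\{\pm1\}$. Unwinding the definition of $\kappa_1^\diamond$ and the isomorphism $T_p(J_X)/\cal{I}_{\phi_1}\cong E[p^k]$ induced by $\varphi$, it follows that the image $\kappa_1^{(n)}\in\mrm{H}^1(K_n,E[p^k])$ of $\kappa_1$ under $\mbf{T}_k\to\mrm{Ind}_{K_n/K}E[p^k]$ equals $\mrm{ord}_p(q)$ times the Kummer image of $\mrm{Trace}_{H_{p^n}/K_n}({}^\psi P_n)$ (up to the sign $a_p(E)$, which I suppress below).

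To see that $\kappa^\infty\in\mrm{H}^1(K,V_p(E)\otimes I)$, note that the image of $\kappa^\infty$ under the augmentation $\Lambda\to\Z_p$ is $\varprojlim_k(\kappa_1\bmod I)$, and that $\kappa_1\bmod I=\mrm{cores}_{K_n/K}\kappa_1^{(n)}$ is $\mrm{ord}_p(q)$ times the Kummer image of $\mrm{Trace}_{H_{p^n}/K}({}^\psi P_n)$. Iterating Lemma \ref{compatiblePOINTS} gives $\mrm{Trace}_{H_{p^n}/H_1}({}^\psi P_n)=\mrm{Trace}_{H_p/H_1}({}^\psi P_1)=0$ --- this is the incarnation here of the identity $\delta^\ast(\varphi_\C)=0$ from the proof of Proposition \ref{constructionMeasure}, valid because $\mrm{Trace}_{H_p/H_1}({}^\psi P_1)$ is the $\varphi$-image of a class pulled back from the prime-to-$p$ Shimura curve $X'$, on which $\mrm{Mor}(J_{X'},E)=0$ since $E$ has multiplicative reduction at $p$. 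Hence $\kappa^\infty\bmod I=0$ in $\mrm{H}^1(K,T_p(E))$, and since $\mrm{H}^0(K,T_p(E))=0$, the exact sequence $0\to T_p(E)\otimes I\to T_p(E)\otimes\Lambda\to T_p(E)\to 0$ lifts $\kappa^\infty$ uniquely to $\mrm{H}^1(K,T_p(E)\otimes I)\subseteq\mrm{H}^1(K,V_p(E)\otimes I)$.

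For the congruence modulo $I^2$ I would compute the derivative with Kolyvagin operators. Fix $k$, take $m$ large, let $\gamma_m$ generate $\Gal(K_m/K)$ and put $D_m=\sum_j j\,\gamma_m^j$. From $(\gamma_m-1)D_m=p^m-N_m$ (with $N_m$ the norm element), the relation $N_m\kappa_1^{(m)}=\mrm{res}_{K_m/K}\mrm{cores}_{K_m/K}\kappa_1^{(m)}=0$ from the previous paragraph, and $p^m\kappa_1^{(m)}=0$, one gets $(\gamma_m-1)D_m\kappa_1^{(m)}=0$; thus $D_m\kappa_1^{(m)}=\mrm{res}_{K_m/K}\partial_k$ for a unique $\partial_k\in\mrm{H}^1(K,E[p^k])$, using injectivity of $\mrm{res}_{K_m/K}$ on $\mrm{H}^1(-,E[p^k])$ (surjectivity of $\overline{\varrho}$). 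The standard description of the augmentation filtration via Shapiro's lemma identifies $\varprojlim_k\partial_k$ with $\kappa^\infty\bmod I^2$ under $I/I^2\cong\cal{G}$. On the other hand, $\Gal(K_m/K)$-equivariance of the Kummer map gives $D_m\kappa_1^{(m)}=\mrm{ord}_p(q)\cdot\cal{K}\big(\textstyle\sum_j j\,(\mrm{Trace}_{H_{p^m}/K_m}{}^\psi P_m)^{\gamma_m^j}\big)$, so it remains to match $\lim_m\sum_j j\,(\mrm{Trace}_{H_{p^m}/K_m}{}^\psi P_m)^{\gamma_m^j}$ with the Kolyvagin-derivative expression \eqref{KolyDerivative} for $\cal{Q}_\psi$. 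Here one observes that, after the $\widehat{K}_{p,1}^\times$-slot --- a $p$-adically complete $\Z_p$-module on which the prime-to-$p$ torsion of $K_p^\times/\Q_p^\times$ dies --- and the projection $e_{\mathbbm{1}}$ over $\Delta_1$ (amounting to $\mrm{Trace}_{H_1/K}$, since $p\nmid[H_1:K]$), the sum over $\scr{U}_0/\scr{U}_n=\Gal(H_{p^n}/H_1)$ in \eqref{KolyDerivative} factors through the quotient $\Gal(H_{p^n}/H_1)\twoheadrightarrow\Gal(K_n/K)$ and becomes exactly $\sum_j j\,(\mrm{Trace}_{H_{p^n}/K_n}{}^\psi P_n)^{\gamma_n^j}$ in the $I/I^2$-slot. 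Passing to the limit and restoring the factor $\mrm{ord}_p(q)$ yields $\kappa^\infty\equiv\mrm{ord}_p(q)\cdot\cal{Q}_K\pmod{I^2}$.

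The hard part is this final reconciliation: matching the $I/I^2$-reduction of $\kappa^\infty$ --- computed through Shapiro's lemma and the operators $D_m$ over the anticyclotomic layers $K_m$ --- with the presentation \eqref{KolyDerivative} of $\cal{Q}_\psi$, which lives over the ring class fields $H_{p^n}$, sees the torsion of $K_p^\times/\Q_p^\times$, and incorporates the Möbius coordinate change $A_\psi$ and the projection to the trivial character. Once both derivatives sit on a common footing the residual factors $\mrm{ord}_p(q)$ and $a_p(E)$ become mere bookkeeping, but carrying this out demands care with the chain of identifications $I/I^2\cong\cal{G}\cong K_p^\times/\Q_p^\times\cong\widehat{K}_{p,1}^\times$ and with which partial traces intervene at each stage.
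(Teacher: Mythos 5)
Your proposal is correct and follows essentially the same route as the paper's proof: you identify $\kappa^\diamond_1(p^n)$ with the Kummer image of $\mrm{Trace}_{H_{p^n}/K_n}({^{\psi}}\hspace{-.5mm}P_{n})$, deduce $\kappa^\infty\in \mrm{H}^1(K,V_p(E)\otimes_{\Z_p} I)$ from the trace relations of Lemma \ref{compatiblePOINTS}, and match the $I/I^2$-reduction with the Kolyvagin-derivative description \eqref{KolyDerivative} of $\mathcal{Q}_K$. The extra material (the explicit $D_m$-operators, the tracking of $\Gal(H_{p^n}/H_1)\twoheadrightarrow\Gal(K_n/K)$ and the $\widehat{K}^\times_{p,1}$-slot) spells out a reconciliation the paper leaves to the reader; note also that the paper first replaces $E$ by a $\Q$-isogenous curve so that $\varphi$ realizes $E[p^k]$ as the Hecke-eigenquotient of $T_p(J_X)$, a normalization your appeal to $T_p(J_X)/\mathcal{I}_{\phi_1}\cong E[p^k]$ tacitly requires.
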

\begin{proof}
 Up to replace the elliptic curve $E$ by a $\Q$-isogenous one, we can assume that the modular parametrization  $\varphi\colon X\to E$ identifies $E$ with the quotient of the Jacobian of $X$ determined by the system of Hecke eigenvalues attached to $E$. Then, the choice of an embedding $\psi\colon K\hookrightarrow B$ of conductor $p$ and that of a sequence of consecutive edges $\{e_n^\psi\}_{n\ge1}$ determine a collection of CM points $\{x^{\psi}_n\in X(H_{p^n})\}_{n\ge1}$  such that 
\[
\kappa^\diamond_1(p^n)=\mbox{Kummer image of  }\mrm{Trace}_{H_{p^n}/K_n}\big({^{\psi}\hspace{-.5mm}P_{n}}\big)
\]
in $\mrm{H}^1(K_n,T_p(E))$ for every $n\ge1$, where  ${^{\psi}\hspace{-.5mm}P_{n}}$ is the point defined in equation \eqref{thepoints}.
 Now, the first claim follows from Lemma \ref{compatiblePOINTS}, while the second is deduced from the description of $\mathcal{Q}_K$ as a limit of Kolyvagin derivatives given in equation \eqref{KolyDerivative}.
\end{proof}

\subsubsection{The elements $\lambda_{m}$.}
Let $m\in\mathcal{N}_k^\mrm{def}$ and $\mathcal{R}$ be any ring. By strong approximation, an automorphic form $g\in \mathcal{R}[S_{m}]$ of level $pN^+$ on the definite quaternion algebra $B_{m}$ can be interpreted as $(R_{m}[1/p])^\times$-invariant function $g\colon \scr{E}\to \mathcal{R}$ on oriented edges of the Bruhat--Tits tree $\scr{T}$ for $\mrm{PGL}_2(\Q_p)$ such that the $U_p$-operator is described by the formula
\[
U_p(g)(e)=\sum_{s(e')=t(e)}g(e')\qquad\forall\ e\in\scr{E}.
\]
Now, any embedding $\psi\colon K\hookrightarrow B_{m}$ satisfying 
\begin{equation}\label{conditionawayfromp}
\psi(K)\cap R_{m}[1/p]=\psi(\mathcal{O}_K[1/p])
\end{equation}
induces, via the reciprocity map of class field theory and strong approximation, an action $\bs{\psi}\colon \Gal(H_{p^\infty}/K)\to \mrm{Aut}(\scr{T})$ of $\Gal(H_{p^\infty}/K)$ on the Bruhat--Tits tree $\scr{T}$. From any sequence $\{e_n^\psi\}_{n\ge1}$ of consecutive edges originating from $v_\psi$, we can then define the  $\Z[\Gal(H_{p^n}/K)]$-valued automorphic forms on $B_{m}$ of level $pN^+$ by setting
\[
h_n^\psi:=\sum_{\sigma\in G(H_{p^n}/K)}[\sigma]\cdot \big[\bs{\psi}_\sigma( e_n^\psi)\big].
\]
 If we denote by $\pi\colon \Z[\Gal(H_{p^{n+1}}/K)]\to \Z[\Gal(H_{p^n}/K)]$ the natural projections, we find that
\[
\pi\big(h_{n+1}^\psi\big)=U_p\big(h_{n}^\psi\big).
\]
Let $\phi_{m}\colon\bb{T}_{m}\to\Z/p^k\Z$ be the mod $p^k$ level raising eigensystem constructed in Section \ref{ExpLR}. Then, the  elements 
\[
\Big\{a_p(E)^{-n}\cdot h_{n}^\psi\ \in\ \Z[S_{m}]/\mathcal{I}_{\phi_{m}}\otimes_\Z\Z[\Gal(H_{p^n}/K)]\Big\}_{n\ge1}
\]
are compatible and, through the projections $\Gal(H_{p^n}/K)\to \Gal(K_n/K)$, define an element in the projective limit
\begin{equation}\label{generatingSeries}
 \tilde{\lambda}^\diamond_{m}\ \in\ \Z[S_{m}]/\mathcal{I}_{\phi_{m}}\otimes_\Z\Lambda.
\end{equation}
\begin{remark}
    We dropped the reference to the embedding $\psi$ from the symbol $\tilde{\lambda}^\diamond_{m}$ because the element depends mildly on the choices involved in its construction. A different sequence of consecutive edges originating from $v_\psi$ multiplies the automorphic forms $h_n^\psi$'s by a fixed group-like element in $\Gal(H_{p^\infty}/H)$, while a different embedding satisfying \eqref{conditionawayfromp} multiplies the automorphic forms by a group-like element in $\Gal(H_{p^\infty}/K)$ (see \cite{CDuniformization}, Section 2).
\end{remark}
\noindent A mod $p^k$ level raising eigenfunction $g_{m}$ as constructed in Section \ref{ExpLR} gives
\[
\lambda^\diamond_{m}:=\langle  \tilde{\lambda}^\diamond_{m},g_{m}\rangle\hspace{1mm}\in\hspace{1mm}\Lambda/p^k\Lambda
\]
which depends on the choice of $g_{m}$ only up to multiplication by a unit in $\Z/p^k\Z$ (Corollary \ref{multiplicityONE}).

\begin{definition}
   For any $m\in\cal{N}_k^\mrm{def}$ we set 
    \[
\lambda_{m}:=\mrm{ord}_p(q)\cdot\lambda^\diamond_{m}\hspace{1mm}\in\hspace{1mm}\Lambda/p^k\Lambda.
    \]
\end{definition}

\subsubsection{Explicit reciprocity laws.}
The reciprocity laws are proved following the arguments in \cite[Sections 8 $\&$ 9]{BD} (see also \cite[Theorems 5.1 $\&$ 5.5]{ChidaHsieh}). For the convenience of the reader, we briefly recall the main ideas.
\begin{theorem}
For every $m\in\mathcal{N}_k^\mrm{ind}$ and any $k$-admissible prime $\ell\mid m$, there is an isomorphism of $\Lambda$-modules $\mrm{H}^1_\mrm{ord}(K_\ell,\mbf{T}_k)\cong \Lambda/p^k\Lambda$ such that
\[
\mrm{loc}_\ell(\kappa_m)=\lambda_{m/\ell}.
\]
\end{theorem}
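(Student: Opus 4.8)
The plan is to follow the strategy of \cite[Sections 8 \& 9]{BD}, with the refinements of \cite[Section 5]{ChidaHsieh}, and to check that the multiplicative reduction of $E$ at $p$ does not obstruct it. Write $n:=m/\ell\in\mathcal{N}_k^\mrm{def}$ for the complementary factor, so that $B_n$ is the definite quaternion algebra of discriminant $nN^-=mN^-/\ell$ and level $pN^+$ appearing in the construction of $\lambda_{m/\ell}$. First I would record the geometric input. Because $\ell$ divides the discriminant $mN^-$ of the \emph{indefinite} quaternion algebra $B_m$, the Shimura curve $X_m$ has purely toric reduction at the prime $\ell$ (over $K_\ell=\Q_{\ell^2}$): by the Cerednik--Drinfeld uniformization, $X_m$ becomes over $K_\ell^\mrm{un}$ a rigid-analytic quotient of Drinfeld's $\ell$-adic upper half plane by the image of $(R_n[1/\ell])^\times$. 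In particular the irreducible components of its special fibre at $\ell$ are identified with the definite Shimura set $S_n$ (Ribet's theorem, \cite[Theorem 3.4]{BimodulesRibet}), the character group $\scr{X}_\ell(X_m)$ of the torus in the reduction of $J_m$ is the first homology of the dual graph $\Gamma_\ell\backslash\scr{T}_\ell$, and --- since $\ell$ is inert in $K$ --- the CM points $x_n^\psi\in X_m(H_{p^n})$ entering the construction of $\kappa_m^\diamond$ reduce modulo $\ell$ to supersingular points.

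Next I would carry out the local computation. Since $\ell$ is $k$-admissible, $\mrm{H}^1_\mrm{ord}(K_\ell,\mbf{T}_k)$ is free of rank one over $\Lambda/p^k\Lambda$ by \cite[Lemma 3.12]{BipartiteES}. Using Tate's description of the cohomology of an abelian variety with toric reduction (as in \cite[Section 8]{BD} and \cite[Section 1.7]{NekCanadian}), the composition of the Kummer map for $J_m$ with localization at a prime above $\ell$ and projection to the ordinary --- equivalently, singular --- part is computed by the reduction of the point to the special fibre at $\ell$. Since $x_n^\psi$ reduces to a supersingular point, Ribet's identification of the supersingular locus with $S_n$, together with Shimura's reciprocity law and the compatibility of the reduction map at $\ell$ with the $\bs{\psi}$-action on the Bruhat--Tits tree for $\mrm{PGL}_2(\Q_p)$, would identify the reduction of $\delta_m(x_n^\psi)$ modulo $\mathcal{I}_{\phi_m}$ --- noting that $\delta_m=(T_{\ell_\circ}-\ell_\circ-1)(a_{\ell_\circ}(E)-\ell_\circ-1)^{-1}$ acts as the identity modulo $\mathcal{I}_{\phi_m}$ --- with the image of the automorphic form $h_n^\psi$ on $B_n$. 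The normalisation by $a_p(E)^{-n}$ and the trace relation $\mrm{Trace}_{H_{p^{n+1}}/H_{p^n}}(x_{n+1}^\psi)=U_p(x_n^\psi)$ match $\pi(h_{n+1}^\psi)=U_p(h_n^\psi)$, so passing to the limit over $n$ identifies $\mrm{loc}_\ell(\kappa_m^\diamond)$ with the image of $\tilde{\lambda}^\diamond_{m/\ell}$ in $\Z[S_n]/\mathcal{I}_{\phi_m}\otimes_\Z\Lambda$.

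To finish, I would appeal to Lemma \ref{completionCHAR} and Corollary \ref{multiplicityONE}: the $\mathcal{I}_{\phi_m}$-quotient of $\scr{X}_\ell(X_m)$ is related, through the exact sequence linking edges and vertices of $\Gamma_\ell\backslash\scr{T}_\ell$ and the fact that the maximal ideal attached to $\phi_m$ is not Eisenstein, to $\Z[S_n]/\mathcal{I}_{\phi_m}$, and both are free of rank one over $\Z/p^k\Z$. Fixing the isomorphism $\mrm{H}^1_\mrm{ord}(K_\ell,\mbf{T}_k)\cong\Lambda/p^k\Lambda$ --- a choice we are free to make --- so that under the above identifications it becomes the pairing against the level-raising eigenfunction $g_{m/\ell}$ of Corollary \ref{multiplicityONE}, one obtains $\mrm{loc}_\ell(\kappa_m^\diamond)=\langle\tilde{\lambda}^\diamond_{m/\ell},g_{m/\ell}\rangle=\lambda_{m/\ell}^\diamond$; multiplying both sides by $\mrm{ord}_p(q)$ yields the claim. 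The hard part will be making the two identifications --- of the local Kummer map at $\ell$ with reduction to the special fibre, and of Ribet's supersingular description --- simultaneously Galois-equivariant, compatible with $U_p$, and compatible in the tower over $n$, all with integral coefficients modulo $p^k$; this is exactly where the Pollack--Weston hypothesis and Lemma \ref{completionCHAR} (hence Skinner--Zhang's level-raising lemma) are used. The extra multiplicative reduction of $E$, hence of $J_m$, at $p$ plays no role in the analysis at $\ell$; it has already been accounted for, through Nekov\'a\v{r}'s local theory, in the proof of Proposition \ref{localPROP}.
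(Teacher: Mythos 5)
Your high-level plan matches the paper's (Cerednik--Drinfeld uniformization at $\ell$, pass to a discrete invariant of the special fibre, identify it mod $\mathcal{I}_{\phi_m}$ with $\Z[S_{m/\ell}]$, pair against $g_{m/\ell}$), but the middle step is based on the wrong geometric picture. For the first reciprocity law $\ell$ \emph{divides} the discriminant of $B_m$, so $X_m$ has Mumford-curve (totally degenerate) reduction at $\ell$. Its special fibre is a union of rational curves --- there are no supersingular points there. Ribet's identification of the supersingular locus $X_m(\bb{F}_{\ell^2})^{\mathrm{ss}}$ with a definite Shimura set is a good-reduction statement: it is the input for the \emph{second} reciprocity law (Theorem~\ref{2ERL}, where $\ell\nmid m$), not for this one. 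Invoking it here, and asserting that $x_n^\psi$ reduces to a supersingular point, is incorrect.

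What actually does the work, and what the paper uses, is the \emph{component group} $\Phi_{m,\ell}$ of the N\'eron model of $J_m$ over $K_\ell$. Because $\ell$ is $k$-admissible the ordinary submodule $\mrm{H}^1_\mrm{ord}(K_\ell,\mbf{T}_k)$ coincides with the singular quotient, and Tate's theory (as in \cite[Section 1.7.3]{NekCanadian}) computes the image of the Kummer map in that quotient via the specialization map $J_m(K_\ell^\mrm{ur})\to\Phi_{m,\ell}$ --- not via reduction of the CM point to a special point on the special fibre. The identification of $\Phi_{m,\ell}/\mathcal{I}_{\phi_m}$ with $\Z[S_{m/\ell}]/\mathcal{I}_{\phi_{m/\ell}}$ then comes from the Cerednik--Drinfeld description of the dual graph together with Grothendieck's monodromy pairing on the character group $\scr{X}_\ell$ (this is the content of \cite[Theorems 4.3 \& 4.5]{ChidaHsieh}, and is where Lemma~\ref{completionCHAR} enters). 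Your final paragraph correctly gestures at the exact sequence linking edges and vertices of $\Gamma_\ell\backslash\scr{T}_\ell$, and the rest of your argument (the $U_p$-compatibility, the limit over $n$, and the normalization by $\mrm{ord}_p(q)$) is fine; but you should replace the supersingular/Ribet step by the component-group computation, otherwise the identification of $\mrm{loc}_\ell(\kappa_m)$ with $\tilde\lambda^\diamond_{m/\ell}$ does not actually go through.
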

\begin{proof}[Sketch of proof]
    Since $\ell$ is an admissible prime, the subspace of local classes at $\ell$ which are ordinary maps isomorphically to the singular quotient. Thus, Shapiro's lemma and \cite[Section 1.7.3]{NekCanadian} provide an isomorphism 
    \[
\Phi_{m,\ell}/\mathcal{I}_{\phi_m}\otimes_\Z\Lambda\cong \mrm{H}^1_\mrm{ord}(K_\ell, \textbf{T}_k),
    \]
    where $\Phi_{m,\ell}$ denotes the group of connected components of the N\'eron model of $J_{m/K_\ell}$ (see also \cite[Theorem 4.5]{ChidaHsieh}). The quotient $\Phi_{m,\ell}/\mathcal{I}_{\phi_m}$ can be explicitly identified with $\Z[S_{m/\ell}]/\mathcal{I}_{\phi_{m/\ell}}$ by \cite[Theorem 4.3]{ChidaHsieh} in such a way that the image of the class $\mrm{loc}_\ell(\kappa_m)$ in $\Z[S_{m/\ell}]/\mathcal{I}_{\phi_{m/\ell}}\otimes_\Z\Lambda$ equals the inverse limit of the automorphic forms defined in \eqref{generatingSeries} for some  $\psi\colon K\hookrightarrow B_{m/\ell}$ satisfying \eqref{conditionawayfromp}.
    Finally, the choice of a mod $p^k$ level raising eigenfunction $g_{m/\ell}$ provides the isomorphism $\langle-,g_{m/\ell}\rangle\colon \Z[S_{m/\ell}]/\mathcal{I}_{\phi_{m/\ell}}\xrightarrow{\sim}\Z/p^k\Z$ and hence an isomorphism $\mrm{H}^1_\mrm{ord}(K_\ell, \textbf{T}_k)\cong \Lambda/p^k\Lambda$ which satisfies the claim of the theorem.
\end{proof}

\begin{theorem}\label{2ERL}
For every $m\in\mathcal{N}_k^\mrm{ind}$ and any $k$-admissible prime $\ell\nmid m$, there is an isomorphism of $\Lambda$-modules $\mrm{H}^1_\mrm{unr}(K_\ell,\mbf{T}_k)\cong \Lambda/p^k\Lambda$ such that
\[
\mrm{loc}_\ell(\kappa_m)=\lambda_{m\ell}.
\]
\end{theorem}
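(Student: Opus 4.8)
The plan is to establish the second reciprocity law in parallel with the first, replacing the bad reduction of $J_m$ at a prime dividing $m$ by the supersingular reduction of $J_m$ at the admissible prime $\ell\nmid m$. First I would record that, since $\ell\nmid mN$, the Jacobian $J_m$ has good reduction at $\ell$, so Proposition \ref{localPROP} already places $\mrm{loc}_\ell(\kappa_m)$ in the free rank-one $\Lambda/p^k\Lambda$-module $\mrm{H}^1_\mrm{unr}(K_\ell,\mbf{T}_k)$, and the content is to identify it with $\lambda_{m\ell}$. Because $\ell$ is inert in $K$, the CM points on $X_m$ of conductor prime to $\ell$ reduce to supersingular points in characteristic $\ell$, and the unramified localization of a Kummer class is computed by the specialization map to the free module on the supersingular locus of $X_m\times\bb{F}_{\ell^2}$, as in \cite[Section 9]{BD} and \cite[Theorems 5.1 $\&$ 5.5]{ChidaHsieh}.

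The key geometric input is Ribet's identification \cite[Theorem 3.4]{BimodulesRibet} of the supersingular locus $X_m(\overline{\bb{F}}_\ell)^\mrm{ss}$ with the definite Shimura set $S_{m\ell}$ attached to the quaternion algebra $B_{m\ell}$ of discriminant $m\ell N^-$ and an Eichler order of level $pN^+$, together with the Eichler--Shimura relation which makes the identification Hecke-equivariant; the admissibility relation \eqref{LRsign} is precisely the statement that the system $\phi_m$ extends to level $m\ell$ with $\phi_{m\ell}(U_\ell)=\epsilon_\ell$, so that Frobenius at $\ell$ acts through $\epsilon_\ell$ on the $\phi_{m\ell}$-isotypic part. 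Combining this with Shapiro's lemma and the local description of \cite[Section 1.7.3]{NekCanadian}, as in \cite[Theorem 4.3]{ChidaHsieh}, produces a canonical isomorphism
\[
\bb{Z}[S_{m\ell}]/\mathcal{I}_{\phi_{m\ell}}\otimes_\Z\Lambda\ \cong\ \mrm{H}^1_\mrm{unr}(K_\ell,\mbf{T}_k),
\]
under which $\mrm{loc}_\ell(\kappa_m)$ corresponds to the reduction mod $\ell$ of the compatible family of CM points defining $\kappa_m$. At this point I would use that the $p$-adic tree $\scr{T}_{(A,\iota,C)}$ attached to the CM abelian surface underlying $x_n^\psi$ maps $\scr{U}_1$-equivariantly onto the Bruhat--Tits tree $\scr{T}$: this matches the reductions of a consecutive family $\{x_n^\psi\}$, after the $a_p(E)^{-n}$-normalization and the trace from $H_{p^n}$ down to $K_n$, with the inverse limit $\tilde{\lambda}^\diamond_{m\ell}$ of the edge-valued automorphic forms in \eqref{generatingSeries}, for a suitable embedding $\psi\colon K\hookrightarrow B_{m\ell}$ satisfying \eqref{conditionawayfromp}.

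To conclude I would invoke the mod $p^k$ multiplicity one of Corollary \ref{multiplicityONE}: pairing with a level-raising eigenfunction $g_{m\ell}$ gives an isomorphism $\langle-,g_{m\ell}\rangle\colon\bb{Z}[S_{m\ell}]/\mathcal{I}_{\phi_{m\ell}}\xrightarrow{\sim}\Z/p^k\Z$, hence an isomorphism $\mrm{H}^1_\mrm{unr}(K_\ell,\mbf{T}_k)\cong\Lambda/p^k\Lambda$ carrying $\mrm{loc}_\ell(\kappa_m)$ to $\langle\tilde{\lambda}^\diamond_{m\ell},g_{m\ell}\rangle=\lambda^\diamond_{m\ell}$; multiplying through by $\mrm{ord}_p(q)$ then yields $\mrm{loc}_\ell(\kappa_m)=\lambda_{m\ell}$, which is the assertion.

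I expect the main obstacle to be the geometric heart of this level-raising step: identifying the finite part of $\mrm{H}^1(K_\ell,\mbf{T}_k)$ with the $\phi_{m\ell}$-quotient of the free module on the supersingular points of $X_m$ at $\ell$ \emph{compatibly} with the reduction of the Kummer classes of the CM points and with the $U_p$/corestriction structure in the anticyclotomic tower. Making this identification canonical rather than merely abstract relies on the mod $p^k$ multiplicity one of Corollary \ref{multiplicityONE}, which in turn rests on Lemma \ref{completionCHAR} and hence on the Pollack--Weston hypotheses on $\overline{\varrho}$; the tower compatibility is supplied by the tree bijection between consecutive families of edges and compatible families of CM points recorded in Section \ref{compFAM}.
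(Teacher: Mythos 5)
Your architecture matches the paper's: reduce the CM points at the inert prime $\ell$ to supersingular points, use Ribet's identification of $X_m(\bb{F}_{\ell^2})^{\mrm{ss}}$ with $S_{m\ell}$, match the resulting tower of divisors with the inverse limit $\tilde{\lambda}^\diamond_{m\ell}$ of \eqref{generatingSeries}, and conclude by pairing against a level-raising eigenfunction $g_{m\ell}$, with Corollary \ref{multiplicityONE} guaranteeing canonicity up to a unit. However, there is one genuine gap and one misattribution. The gap: you never invoke Ihara's lemma, which in the paper (via \cite[Theorem 9.2]{BD}) supplies the surjectivity of the map $\Z[S_{m\ell}]/\mathcal{I}_{\phi_{m\ell}}\twoheadrightarrow J_m(\bb{F}_{\ell^2})/\mathcal{I}_{\phi_m}$. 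This surjectivity is not a formality: the unramified Kummer functional, transported to $\Z[S_{m\ell}]$ through the reduction map, is a priori only of the form $\langle-,c\cdot g_{m\ell}\rangle$ for some scalar $c\in\Z/p^k\Z$ by multiplicity one, and it is precisely Ihara's lemma (together with the surjectivity of $J_m(\bb{F}_{\ell^2})/\mathcal{I}_{\phi_m}\to\mrm{H}^1_{\mrm{unr}}(K_\ell,E[p^k])$ coming from admissibility) that forces $c$ to be a unit, i.e., that produces a \emph{surjective} eigenfunction normalized by $\mathcal{K}_\ell(-)=\langle-,g_{m\ell}\rangle$. Without this, $\mrm{loc}_\ell(\kappa_m)$ could differ from $\lambda_{m\ell}$ by a non-unit factor and the explicit reciprocity law would fail. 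Note also that this step \emph{is} the construction of $g_{m\ell}$: Section \ref{ExpLR} defers exactly here for it, so citing that section for the existence of $g_{m\ell}$ is circular in exposition (though not in substance).

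The misattribution: your ``canonical isomorphism'' $\Z[S_{m\ell}]/\mathcal{I}_{\phi_{m\ell}}\otimes_\Z\Lambda\cong\mrm{H}^1_{\mrm{unr}}(K_\ell,\mbf{T}_k)$ is justified by appeal to \cite[Theorem 4.3]{ChidaHsieh} and \cite[Section 1.7.3]{NekCanadian}, but those compute \emph{component groups} of $J_m$ at primes dividing the discriminant $mN^-$ and identify the \emph{ordinary/singular} part of local cohomology; they are the mechanism behind the first reciprocity law, not this one. At a prime $\ell\nmid m$ of good reduction the correct mechanism is the factorization of the local Kummer map through $J_m(\bb{F}_{\ell^2})/\mathcal{I}_{\phi_m}$ onto $\mrm{H}^1_{\mrm{unr}}(K_\ell,E[p^k])\cong\Z/p^k\Z$, followed by the Ihara-surjectivity and multiplicity-one argument just described. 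Once you replace the component-group citation by this factorization and add Ihara's lemma (in the form valid when $p$ divides the level, as the paper discusses via Chida--Hsieh and Manning--Shotton), your sketch coincides with the paper's proof.
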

\begin{proof}[Sketch of proof]
We begin by explaining the construction of a mod $p^k$ level raising eigenfunction $g_{m\ell}\colon S_{m\ell}\to\Z/p^k\Z$ as mentioned in Section \ref{ExpLR}.
    Since $\ell\nmid pm$, the local Kummer map $J_m(K_\ell)/\mathcal{I}_{\phi_m}\to\mrm{H}^1(K_\ell,E[p^k])$ factors as the composition of surjective homomorphisms
    \[\xymatrix{
    J_m(K_\ell)/\mathcal{I}_{\phi_m}\ar@{->>}[r]& J_m(\bb{F}_{\ell^2})/\mathcal{I}_{\phi_m}\ar@{->>}[r]^-{\mathcal{K}_\ell}& \mrm{H}_\mrm{unr}^1(K_\ell,E[p^k])
    }\] 
    where $\mrm{H}_\mrm{unr}^1(K_\ell,E[p^k])\cong\Z/p^k\Z$ because $\ell$ is $k$-admissible.
    Recall that Ribet gave a canonical identification between the set of supersingular points $X_m(\bb{F}_{\ell^2})^\mrm{ss}$ and the Shimura set $S_{m\ell}$. Then, Ihara's lemma can be used to prove that the natural map
    \[\xymatrix{
    \Z[S_{m\ell}]/\mathcal{I}_{\phi_{m\ell}}\ar@{->>}[r]&J_m(\bb{F}_{\ell^2})/\mathcal{I}_{\phi_m}
    }\]
    is surjective (see \cite{BD}, Theorem 9.2). We deduce the existence of a unique surjective mod $p^k$ level raising eigenfunction $g_{m\ell}\colon S_{m\ell}\to\Z/p^k\Z$ such that 
    \begin{equation}\label{comparingKummer}
    \mathcal{K}_\ell(-)=\langle-,g_{m\ell}\rangle.
    \end{equation}
    Finally, Shapiro's lemma and the equality in \eqref{comparingKummer} reduce the claim of the theorem to showing that the natural image of $\mrm{loc}_\ell(\kappa_m)$ in $\Z[S_{m\ell}]/\mathcal{I}_{m\ell}\otimes_\Z\Lambda$ is given by the inverse limit of the automorphic forms defined in \eqref{generatingSeries} for an embedding  $\psi\colon K\hookrightarrow B_{m\ell}$ satisfying \eqref{conditionawayfromp}.
\end{proof}

\begin{corollary}\label{corBES} 
The collections
\[
\big\{\kappa_m\in\mrm{Sel}_{m}(K,\mathbf{T}_k)\ \big\lvert\ m\in\mathcal{N}_k^\mrm{ind}\big\},\qquad \big\{\lambda_m\in\Lambda/p^k\Lambda\ \big\lvert\ m\in\mathcal{N}_k^\mrm{def}\big\}
\]
 for every $k>0$ form a bipartite Euler system for $\mbf{T}$.
 \end{corollary}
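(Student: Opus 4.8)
The plan is to verify, one at a time, the three requirements in the definition of a bipartite Euler system for $\mathbf{T}$. The membership conditions are already in hand: by the definition following Proposition \ref{localPROP} we have $\kappa_m=\mathrm{ord}_p(q)\cdot\kappa_m^\diamond\in\mathrm{Sel}_m(K,\mathbf{T}_k)$ for every $m\in\mathcal{N}_k^{\mathrm{ind}}$, and by construction $\lambda_m=\mathrm{ord}_p(q)\cdot\lambda_m^\diamond\in\Lambda/p^k\Lambda$ for every $m\in\mathcal{N}_k^{\mathrm{def}}$. The two explicit reciprocity laws demanded by the definition are precisely the content of the two preceding theorems: applying the first one with $m\ell$ in place of $m$ yields, for any $m\ell\in\mathcal{N}_k^{\mathrm{ind}}$, an identification $\mathrm{H}^1_{\mathrm{ord}}(K_\ell,\mathbf{T}_k)\cong\Lambda/p^k\Lambda$ under which $\mathrm{loc}_\ell(\kappa_{m\ell})=\lambda_m$; and Theorem \ref{2ERL} yields, for any $m\ell\in\mathcal{N}_k^{\mathrm{def}}$, an identification $\mathrm{H}^1_{\mathrm{unr}}(K_\ell,\mathbf{T}_k)\cong\Lambda/p^k\Lambda$ under which $\mathrm{loc}_\ell(\kappa_m)=\lambda_{m\ell}$. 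Since both theorems are stated directly for the normalized classes $\kappa_m$ and elements $\lambda_{m/\ell}$, $\lambda_{m\ell}$, the common factor $\mathrm{ord}_p(q)$ requires no further comment here.

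What remains is the compatibility as $k$ varies, namely that under the inclusion $\mathcal{N}_{k+1}\subseteq\mathcal{N}_k$ together with the reduction maps $E[p^{k+1}]\xrightarrow{p}E[p^k]$ and $\Lambda/p^{k+1}\Lambda\to\Lambda/p^k\Lambda$, the level-$(k+1)$ data map to the level-$k$ data. This is the point at which the whole construction of Section \ref{compFAM} must be carried out uniformly in $k$. The key observation is that everything feeding into the reduction modulo $p^k$ is defined integrally and independently of $k$: a single embedding $\psi$ of conductor $p$ and a single sequence of consecutive edges $\{e_n^\psi\}_{n\ge1}$ produce a collection of CM points $\{x_n^\psi\in X_m(H_{p^n})\}_n$ and, through $\delta_m$ and the Kummer map, classes in $\mathrm{H}^1(H_{p^n},T_p(J_m))$ that do not depend on $k$; reducing modulo $p^{k+1}$ and then modulo $p^k$ agrees with reducing modulo $p^k$, and the isomorphisms $T_p(J_m)/\mathcal{I}_{\phi_m}\cong E[p^k]$ of \eqref{GaloisISO} are compatible because the mod $p^k$ level raising $\phi_m$ is the reduction of the mod $p^{k+1}$ one. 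Likewise the integral automorphic forms $h_n^\psi$ entering $\tilde\lambda_m^\diamond$ in \eqref{generatingSeries} are defined over $\Z$, and the pairing \eqref{pairing} is defined over $\Z[1/6]$, so the only choice to coordinate is that of the level raising eigenfunctions $g_m$; by Corollary \ref{multiplicityONE} each is unique up to a unit of $(\Z/p^k\Z)^\times$, and the mod $p^k$ reduction of a surjective eigenfunction over $\Z/p^{k+1}\Z$ is again a surjective eigenfunction, so one may fix a coherent family of lifts, and then $\langle\tilde\lambda_m^\diamond,g_m\rangle$ reduces correctly.

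The main obstacle is therefore not any deep estimate but the bookkeeping required to make all these auxiliary choices simultaneously coherent across every $k$: one embedding and one edge-sequence serving all levels, one compatible family of level raising data $(\phi_m,g_m)$, together with the already-noted facts that $\kappa_m^\diamond$ is independent of the auxiliary prime $\ell_\circ$ in $\delta_m$ and that $\tilde\lambda_m^\diamond$ depends on its choices only up to a group-like unit, which is absorbed by the pairing against $g_m$. Once this coherence is arranged, the limits over $n$ are well defined at each level and assemble into projective systems over $k$, so the three axioms are verified and the collections $\{\kappa_m\}$ and $\{\lambda_m\}$ form a bipartite Euler system for $\mathbf{T}$.
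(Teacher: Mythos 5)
Your proposal is correct and follows the same route the paper takes implicitly: the corollary is stated without proof precisely because it amounts to citing the two explicit reciprocity law theorems together with the fact that the constructions of $\kappa_m$ and $\lambda_m$ in Section \ref{compFAM} are carried out integrally and uniformly in $k$, so that compatibility under $\mathcal{N}_{k+1}\subseteq\mathcal{N}_k$ and the reduction maps is built in. Your additional bookkeeping on coherent choices of $\psi$, the edge sequence, and the level raising data $(\phi_m,g_m)$ (with Corollary \ref{multiplicityONE} absorbing the unit ambiguity into the choice of isomorphism) correctly fills in the details the paper leaves to the reader.
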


\bibliography{Plectic}
\bibliographystyle{alpha}

\end{document}